\newcommand{\be}{\begin{equation*}}
\newcommand{\ee}{\end{equation*}}
\newcommand{\beq}{\begin{equation}}
\newcommand{\eeq}{\end{equation}}
\newcommand{\begincal}{\begin{eqnarray*}}
\newcommand{\fincal}{\end{eqnarray*}}
\newtheorem{thm}{Theorem}[section]
\newtheorem{lemma}[thm]{Lemma}
\newtheorem{cor}[thm]{Corollary}
\newtheorem{prop}[thm]{Proposition}
\newtheorem{rem}[thm]{Remark}
\newcommand{\eps}{{\varepsilon}}
\newcommand{\R}{{\mathbf R}}
\newcommand{\N}{{\mathbf N}}
\def\eps{\varepsilon}
\begin{document}

\title[Energy convexity of intrinsic bi-harmonic maps and applications]{Energy convexity of intrinsic bi-harmonic maps and applications I: spherical target}

\author{Paul Laurain}
\address{Institut de Math\'{e}matiques de JussieuUniversit\'{e} Paris Diderot, B\^{a}timent Sophie Germain\\ Case 7012, 75205 PARIS Cedex 13\\
France}
\email{paul.laurain@imj-prg.fr}

\author{Longzhi Lin}
\address{Mathematics Department\\University of California - Santa Cruz, 1156 High Street\\ Santa Cruz, CA 95064\\USA}
\email{lzlin@ucsc.edu}


\subjclass[2010]{Primary 35G30, 35G25}

\maketitle
\begin{abstract}
Every harmonic map is an intrinsic bi-harmonic map as an absolute minimizer of the intrinsic bi-energy functional, therefore intrinsic bi-harmonic map and its heat flow are more geometrically natural to study, but they are also considerably more difficult analytically than the extrinsic counterparts due to the lack of coercivity for the intrinsic bi-energy.  In this paper, we show an energy convexity and thus uniqueness for weakly intrinsic bi-harmonic maps from the unit $4$-ball $B_1 \subset \mathbf{R}^4$ into the sphere $\mathbf{S}^n$. This is a higher-order analogue of the energy convexity and uniqueness for weakly harmonic maps on unit $2$-disk in $\mathbf{R}^2$ proved by Colding and Minicozzi \cite{CM08} (see also Lamm and the second author \cite{LL13}). In particular, this yields a version of uniqueness of weakly harmonic maps on the unit $4$-ball which is new. As an application, we also show a version of energy convexity along the intrinsic bi-harmonic map heat flow into $\mathbf{S}^n$, which in particular yields the long-time existence of the intrinsic bi-harmonic map heat flow, a result that was until now only known assuming the non-positivity of the target manifolds by Lamm \cite{Lamm05}. Moreover, the energy convexity along the flow yields the uniform convergence of the flow which is not known before. One of the key ingredients in our proofs is a refined version of the $\epsilon$-regularity of the first author and Rivi\`{e}re \cite{LaR}.
\end{abstract}
\section{Introduction}
\subsection{Extrinsic and intrinsic bi-harmonic maps}
Let $(\mathcal{N}, g) \subset  \R^{n+1}$ be a closed (i.e. compact and without boundary) $C^3$ Riemannian submanifold with metric $g$ of $\R^{n+1}$. The \textit{extrinsic bi-energy} of a $W^{2,2}$ map $u = (u^1, ..., u^n) : B_1 \to \mathcal{N} \subset \R^{n+1}$ from the $4$-dimensional unit ball $B_1=B_1(0)\subset \R^4$ is defined by
\begin{equation}
E(u)\,=\,\frac{1}{4}\,\int_{B_1}\,|\Delta u|^2 dx\,,
\end{equation}
where $\Delta$ is the Laplacian operator on $\R^4$. A weakly \textit{extrinsic bi-harmonic map} $u$ from $B_1$ into $\mathcal{N}\hookrightarrow \R^{n+1}$ is a map in $W^{2,2}(B_1, \R^{n+1})$, which is a critical point of $E(u)$ and takes values almost everywhere in $\mathcal{N}$. 
Similarly, a weakly \textit{intrinsic bi-harmonic map} $u \in W^{2,2}(B_1, \mathcal{N})$ is a critical point of the \textit{intrinsic bi-energy}
\begin{equation}
\label{energy} I(u)\,=\,\frac{1}{4}\,\int_{B_1}\,|(\tau(u)|^2 dx \,=\, \,\frac{1}{4}\,\int_{B_1}\,|\Delta u|^2 - |A(u)(\nabla u, \nabla u)|^2dx\,,
\end{equation}
where $\tau(u) = (\Delta u)^T$ is the tangential component of $\Delta u$, which is commonly called the \textit{tension field}, and $A(u)$ is the second fundamental form of the embedding of $\mathcal{N}$ in $\mathbf{R}^{n+1}$.  Both the extrinsic and intrinsic bi-energies are scaling invariant in 4 dimensions. Now let $\Pi : \mathcal{N}_\delta \rightarrow \mathcal{N}$ be the nearest point projection map, which is well defined and $C^3$ for $\delta>0$ small enough. Here $\mathcal{N}_\delta=\{y\in\R^{n+1}\,\vert\, d\,(y, \mathcal{N})\leq \delta\}$. For $y\in \mathcal{N}$, let 
$$P(y)\equiv \nabla \Pi (y):\R^{n+1} \rightarrow T_y \mathcal{N}$$
be the orthogonal projection, and 
$$P^\bot(y)\equiv \text{Id}-\nabla\Pi (y):\R^{n+1} \rightarrow (T_y \mathcal{N})^\bot\,.$$
In the following, we will write $P$ (resp. $P^\bot$) instead of  $P(y)$ (resp. $P^\bot(y)$) and we will identify these linear transformations with their matrix representations in $\mathcal{M}_n$. We also note that these projections are in $W^{2,2}(B_1,\mathcal{M}_n)$ as soon as $u$ is in  $W^{2,2}(B_1,\mathcal{N})$ . Finally, note that the second fundamental form $A(\,\cdot\,)(\, \cdot\, , \,\cdot\,)$ of $\mathcal{N}\subset\R^{n+1}$ is defined by
$$A(y)(Y,Z)=D_Y P^\bot (y) (Z), \quad \forall \,y \in \mathcal{N} \quad \text{and} \quad Y , Z \in T_y \mathcal{N}.$$
We know that $u\in W^{2,2}(B_1,\mathcal{N})$ is an \textit{extrinsic bi-harmonic map} if and only if  (see e.g. \cite{Wa1})
$$ \Delta^2 u\,  \,\bot\,\, T_u \mathcal{N} \hbox{ almost everywhere},$$
which can be rewritten as follows:
\beq
\label{ebe}
\begin{split}
\Delta^2 u = &-\Delta (\nabla P^\bot  \nabla u)- \text{div}\, (\nabla P^\bot  \Delta u) + 2 \nabla P^\bot \nabla(\nabla P^\bot  \nabla u) \\
&+ 2 \nabla P^\bot \nabla P^\bot \Delta u -(\nabla P  P^\bot -P^\bot \nabla P) \nabla \Delta u.
\end{split}
\eeq
For \textit{intrinsic bi-harmonic map}, we need to add some tangent terms, see \cite{Wa2} and \cite{LaR} for details, and the equation is
\beq
\label{ibe}
\begin{split}
\Delta^2 u = &-\Delta (\nabla P^\bot  \nabla u)-\text{div}\, (\nabla P^\bot  \Delta u) + 2 \nabla P^\bot \nabla(\nabla P^\bot  \nabla u) + 2 \nabla P^\bot \nabla P^\bot \Delta u\\
&-(\nabla P  P^\bot -P^\bot \nabla P) \nabla \Delta u + P\left(\langle  \nabla P^\bot  \nabla u , \nabla_u(\nabla_u P^\bot)\nabla u  \nabla u \rangle \right) +  \langle \nabla P^\bot  \nabla u,  \nabla_u P^\bot \nabla u \nabla P\rangle  \\
&- \text{div} \,\langle \nabla P^\bot  \nabla u,  \nabla_u P^\bot  \nabla u P \rangle +\langle \nabla P^\bot  \nabla u,  \nabla P^\bot  \nabla P \rangle - \text{div} \,\langle \nabla P^\bot  \nabla u,  \nabla P^\bot  P \rangle\,.
\end{split}
\eeq
Here $\nabla_u P^\bot = \nabla_{y} P^\bot (y)|_{y=u}$.  Alternatively, when $\mathcal{N} = \mathbf{S}^n$,  the intrinsic bi-harmonic map $u$ satisfies the equation (see e.g. Lamm-Rivi\`{e}re \cite{LR})
\begin{equation}\label{ibe02}
\Delta^2 u \,=\, \Delta (V\cdot \nabla u) + \text{div }(w \nabla u) + W\cdot \nabla u\,,
\end{equation}
where
\beq
\label{ibe03}
\left\{
\begin{aligned}
V^{ij} &=  u^i\nabla u^j - u^j\nabla u^i\\
w^{ij}&= \text{div}\left(V^{ij}\right) \\
W^{ij}&= \nabla w^{ij}+ 2 \left[ \Delta u^i \nabla u^j - \Delta u^j \nabla u^i + |\nabla u|^2(u^i\nabla u^j - u^j\nabla u^i) \right]\,.
\end{aligned}
\right.
\eeq
It is worth noting that intrinsically the intrinsic bi-harmonic map equation \eqref{ibe} can be written as
$$
\Delta_g^2 u - R^{\mathcal{N}}(\nabla_g u, \tau(u))\nabla_g u = 0\,,
$$
where $R^{\mathcal{N}}$ is the curvature tensor of $\mathcal{N}$. This was first derived by Jiang in \cite{Jiang86}.

\vskip 2mm
Harmonic maps are critical points of the Dirichlet energy $D(u)$ given by
$$
D(u) = \frac{1}{2}\int_{B_1}|\nabla u|^2 dx\,,
$$
and they are clearly critical points (absolute minimizers) of the intrinsic bi-energy $I(u)$ since they satisfy $\tau(u) = 0$. In this sense, the intrinsic bi-harmonic map is a more geometrically natural generalization of the harmonic map, although the extrinsic bi-harmonic map is more natural from the analytical point of view. It shall be noted that the extrinsic bi-energy $E(u)$, in contrast to $I(u)$, depends on the embedding of the target manifold $\mathcal{N}$ in $\mathbf{R}^{n+1}$.  Despite the fact that it is more geometrically natural to study the intrinsic bi-harmonic map, it is less studied and considerably more difficult analytically than the extrinsic bi-harmonic map due to the lack of \textit{coercivity} for the intrinsic bi-energy $I(u)$ (because the two terms $\Delta u$ and $A(u)(\nabla u, \nabla u)$ in the intrinsic bi-energy $I(u)$ can dissolve but cannot dominate each other in an analytic way). The existence and non-existence of nontrivial (or \textit{proper}, i.e., non-harmonic) \textit{intrinsic} bi-harmonic maps can be found in Jiang \cite{Jiang86}, Mou \cite{Mou00}, Caddeo-Montaldo-Oniciuc \cite{CMO01, CMO02}, Oniciuc \cite{Oni02} and Baird-Kamissoko \cite{BK03}. For the regularity of weakly intrinsic bi-harmonic maps on 4 dimensional Euclidean domains, see e.g. Ku \cite{Ku08} for spherical targets and Moser \cite{Mos06}, Wang \cite{Wa1} for general targets. The regularity of weakly \textit{extrinsic} bi-harmonic maps has been studied by Chang-Wang-Yang \cite{CWY99}, Strzelecki \cite{Str03} and Wang \cite{Wa1, Wa2}, see also Lamm-Rivi\`{e}re \cite{LR}. 

\begin{rem}
There is a notion of bi-harmonic sub-manifolds (intrinsic bi-harmonic isometric immersion $\mathcal{M} \to \mathcal{N}$), which generalizes the notion of minimal sub-manifolds (harmonic isometric immersion $\mathcal{M} \to \mathcal{N}$) since every harmonic map is an intrinsic bi-harmonic map. It is conjectured that any bi-harmonic sub-manifold in sphere has parallel mean curvature vector (commonly referred to as the \text{generalized Chen's conjecture}), see Ou \cite{Ou16} for a survey.
\end{rem}
\begin{rem}
The divergence terms $- \text{div} \,\langle \nabla P^\bot  \nabla u,  \nabla_u P^\bot  \nabla u P \rangle$ and $- \text{div} \,\langle \nabla P^\bot  \nabla u,  \nabla P^\bot  P \rangle$ were missing in the intrinsic bi-harmonic map equation \eqref{ibe} in Wang \cite{Wa2} and all other related literatures. The analytical estimates needed for regularity still hold for these missing terms and thus the results in \cite{Wa2} etc. are still valid, but for our energy convexity and uniqueness results in this paper it is important to consider all terms.
\end{rem}

\begin{rem}
In local coordinates, the last several terms in \eqref{ibe} read as
\beq\label{localex}
\begin{split}
P\left(\langle  \nabla P^\bot  \nabla u , \nabla_u(\nabla_u P^\bot)\nabla u  \nabla u \rangle \right) \,&=\,\sum_{\alpha, \beta,\gamma, i, j, k, m} P_{lk} D_\alpha (P^\bot)_{ij} D_{\alpha} u^j  \nabla_{u^k} \nabla_{u^\beta}(P^\bot)_{im}  D_\gamma u^\beta D_\gamma u^m \,;\\
 \langle \nabla P^\bot  \nabla u,  \nabla_u P^\bot \nabla u \nabla P\rangle \,&=\, \sum_{\alpha, \beta, \gamma, i, j, k, m} D_\alpha (P^\bot)_{ij} D_{\alpha} u^j  \nabla_{u^\beta}(P^\bot)_{im} D_\gamma u^m D_\gamma P_{\beta l}\,;\\
 \langle \nabla P^\bot  \nabla u,  \nabla P^\bot  \nabla P \rangle\,&=\, \sum_{\alpha, \beta, i, j, k} D_\alpha (P^\bot)_{ij} D_{\alpha} u^j  D_\beta (P^\bot)_{ik}    D_\beta P_{kl}\,;\\
\text{div} \,\langle \nabla P^\bot  \nabla u,  \nabla_u P^\bot  \nabla u P \rangle \,&=\,  \sum_{\alpha, \beta, \gamma, i, j, k} D_\beta \left [D_\alpha (P^\bot)_{ij} D_{\alpha} u^j   \nabla_{u^\gamma}(P^\bot)_{ik}   D_\beta u^k P_{\gamma l}\right]\,;\\
\text{div} \,\langle \nabla P^\bot  \nabla u,  \nabla P^\bot  P \rangle \,&=\,  \sum_{\alpha, \beta, i, j, k} D_\beta \left [D_\alpha (P^\bot)_{ij} D_{\alpha} u^j  D_\beta (P^\bot)_{ik}    P_{kl}\right]\,;
\end{split}
\eeq
\end{rem}

\subsection{Extrinsic and intrinsic bi-harmonic map heat flows}
The negative $L^2$-gradient flow of the extrinsic bi-energy $E(u)$ is called a (weakly) extrinsic bi-harmonic map heat flow. It is given by $u\in W^{1,2}([0,T), L^2(B_1, \mathcal{N} )) \cap L^2([0, T); W^{2,2}(B_1, \mathcal{N}))$ which satisfies
\beq
\label{bfe}
\left\{
\begin{aligned}
 \frac{\partial u}{\partial t} + \Delta^2 u = &-\Delta (\nabla P^\bot  \nabla u)- \text{div}\, (\nabla P^\bot  \Delta u) + 2 \nabla P^\bot \nabla(\nabla P^\bot  \nabla u) \\
&+ 2 \nabla P^\bot \nabla P^\bot \Delta u -(\nabla P  P^\bot -P^\bot \nabla P) \nabla \Delta u
 \quad \text{ on } B_1 \times [0, T)\,; \\
u= & \,u_0 \quad  \text{ on } B_1\times \{0\}\,;\\
u= & \,\chi(x)\,, \quad \partial_{\nu} u = \xi(x) \quad  \text{ on } \partial B_1\times [0, T)\,.
\end{aligned}
\right.
\eeq
where $u_0 \in W^{2,2}(B_1, \mathcal{N})$.  There are several results of existence for extrinsic bi-harmonic map heat flow, see for instance Lamm \cite{Lamm04} for small initial data and Gastel \cite{Gas06} and Wang \cite{Wa4} for solutions with many finitely singular time and any initial data, see also \cite{HHW14} and \cite{HLLW16}. Moreover, the solutions to \eqref{bfe} satisfy the following energy inequality
\beq
\label{ei} 
2\int_{0}^T \int_{B_1} \left\vert \frac{\partial u}{\partial t} \right\vert^2 \, dx\, dt + \int_{B_1 \times \{T\}} \vert \Delta u\vert^2 \, dx \leq   \int_{B_1} \vert\Delta u_0\vert^2 \, dx\,.
\eeq

Similarly,  a weakly intrinsic bi-harmonic map heat flow is the negative $L^2$-gradient flow of the intrinsic bi-energy $I(u)$ given by $u\in W^{1,2}([0,T), L^2(B_1, \mathcal{N} )) \cap L^2([0, T); W^{2,2}(B_1, \mathcal{N}))$ which satisfies
\beq
\label{bfi}
\left\{
\begin{aligned}
\frac{\partial u}{\partial t} + \Delta^2 u = &-\Delta (\nabla P^\bot  \nabla u)-\text{div}\, (\nabla P^\bot  \Delta u) + 2 \nabla P^\bot \nabla(\nabla P^\bot  \nabla u) + 2 \nabla P^\bot \nabla P^\bot \Delta u\\
&-(\nabla P  P^\bot -P^\bot \nabla P) \nabla \Delta u + P\left(\langle  \nabla P^\bot  \nabla u , \nabla_u(\nabla_u P^\bot)\nabla u  \nabla u \rangle \right)   \\
&+  \langle \nabla P^\bot  \nabla u,  \nabla_u P^\bot \nabla u \nabla P\rangle - \text{div} \,\langle \nabla P^\bot  \nabla u,  \nabla_u P^\bot  \nabla u P \rangle  \\
&+\langle \nabla P^\bot  \nabla u,  \nabla P^\bot  \nabla P \rangle- \text{div} \,\langle \nabla P^\bot  \nabla u,  \nabla P^\bot  P \rangle \quad \text{ on } B_1 \times [0, T)\,; \\
u= & \,u_0 \quad  \text{ on } B_1\times \{0\}\,;\\
u= & \,\chi(x)\,, \quad \partial_{\nu} u = \xi(x) \quad  \text{ on } \partial B_1\times [0, T)\,.
\end{aligned}
\right.
\eeq
where $u_0 \in W^{2,2}(B_1,\mathcal{N})$. The solutions to \eqref{bfi} satisfy the following energy inequality
\beq
\label{eii} 
2\int_{0}^T \int_{B_1} \left\vert \frac{\partial u}{\partial t} \right\vert^2 \, dxdt + \int_{B_1  \times \{T\}} \vert \tau(u) \vert^2 \, dx \leq   \int_{B_1} \vert\tau(u_0)\vert^2 \, dx \,.
\eeq
Intrinsically, the intrinsic bi-harmonic map heat flow equation \eqref{bfi} can be written as
\begin{equation}
\label{ibmhf1}
\frac{\partial u}{\partial t} +\Delta^2 u =  R^{\mathcal{N}}(\nabla u, \tau(u))\nabla u \,.
\end{equation}

Contrary to the extrinsic bi-harmonic map heat flow, there is no bi-energy monotonicity for the intrinsic bi-harmonic map heat flow (cf. \eqref{ei} and \eqref{eii} for $T>0$), hence in general only short time existence of a smooth solution is known for the intrinsic bi-harmonic map heat flow, see e.g. Lamm \cite{Lamm01}, Mantegazza-Martinazzi \cite{MM12}. In \cite{Lamm05}, under the assumption that the source manifold is at most four-dimensional (and closed) and the sectional curvature of the target manifold $\mathcal{N}$ is \textit{non-positive}, Lamm proved that the intrinsic bi-harmonic map heat flow with smooth initial data has a global smooth solution, which \textit{sub-converges} to a harmonic map as the time tends to infinity. As we mentioned above, the main difficulty of the study of the intrinsic bi-harmonic map and its heat flow is the lack of \textit{coercivity} for the intrinsic bi-energy $I(u)$. The non-positivity of the sectional curvature on the target manifold $\mathcal{N}$ in \cite{Lamm05} removes the lack of coercivity to some extent. For general target manifolds, see e.g. Moser \cite{Mos05} for a discussion of the blow-up behavior of the intrinsic bi-harmonic map heat flow on 4 dimensional domains as it approaches the first singular time, although \textit{no} such example is known. One of the main results in this paper (Theorem \ref{THM2} and Corollary \ref{COR2}) is to give the first result regarding the \textit{long time existence and uniform convergence} of the intrinsic bi-harmonic map heat flow without assuming the non-positivity of the target manifolds, which is open for many years since Lamm \cite{Lamm05}.

\subsection{Main results}
The main results of this paper are the following. We denote $\nu$ to be the outward unit normal vector on $\partial B_1$.

\begin{thm}\label{THM1}
There exists a constant $\varepsilon_0>0$ such that if $u, v\in W^{2,2}(B_1, \mathbf{S}^n)$ with $u|_{\partial B_1}=v|_{\partial B_1},\partial_\nu u |_{\partial B_1}=\partial_\nu v|_{\partial B_1},$ 
$$\int_{B_1} \vert \Delta u\vert^2 \, dx \,=\, \int_{B_1} \vert \tau(u)\vert^2 + \vert \nabla u\vert^4 \, dx \leq \varepsilon_0 \quad \text{and}\quad \int_{B_1} \vert \nabla v\vert^4 \, dx \leq \varepsilon_0\,,$$
and $u$ is a weakly intrinsic bi-harmonic map, then we have the energy convexity
\begin{equation}\label{Convexity1}
\frac{1}{8}\int_{B_1} |\Delta v-\Delta u|^2\,\leq\, \frac{1}{2}\int_{B_1} |\tau(v)-\tau(u)|^2 \,\leq\, \int_{B_1} |\tau(v)|^2-\int_{B_1}|\tau(u)|^2\,.
\end{equation}
\end{thm}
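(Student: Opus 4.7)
\emph{Proof plan.} The plan is to adapt the Colding--Minicozzi energy convexity argument for $2$-dimensional harmonic maps into $\mathbf{S}^n$ to this fourth-order setting. Set $\phi:=v-u$; the matching Dirichlet and Neumann data for $u$ and $v$ give $\phi\in W^{2,2}_0(B_1,\mathbf{R}^{n+1})$, so I can freely integrate by parts twice against $\phi$. Throughout I will use the sphere identities $\tau(w)=\Delta w+|\nabla w|^2w$ and $\tau(w)\perp w$, together with the crucial algebraic relation $u\cdot\phi=-\tfrac{1}{2}|\phi|^2$ coming from $|u|=|v|=1$ (this was the source of the quadratic-in-$\phi$ cancellation in the harmonic map case). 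Observing that $\tau(v)-\tau(u)=\Delta\phi+\bigl(|\nabla v|^2v-|\nabla u|^2u\bigr)$ ties both halves of \eqref{Convexity1} to a common quantity, my approach is to prove the first inequality first and then use it when closing the second.

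For the first inequality I will start from the pointwise bound $|\Delta\phi|^2\leq 2|\tau(v)-\tau(u)|^2+2\bigl||\nabla v|^2v-|\nabla u|^2u\bigr|^2$ and reduce matters to estimating the residual term, which I split as
\[
|\nabla v|^2v-|\nabla u|^2u=(|\nabla v|^2-|\nabla u|^2)v+|\nabla u|^2\phi.
\]
For the first piece I use $\bigl||\nabla v|^2-|\nabla u|^2\bigr|\leq(|\nabla v|+|\nabla u|)|\nabla\phi|$ together with the Sobolev embedding $\|\nabla\phi\|_{L^4(B_1)}\leq C\|\Delta\phi\|_{L^2(B_1)}$ (valid on $W^{2,2}_0$ in four dimensions) and H\"{o}lder's inequality to get a bound $\leq C\sqrt{\varepsilon_0}\,\|\Delta\phi\|_{L^2}^2$. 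For the residual $\int|\nabla u|^4|\phi|^2$ I will invoke a refined version of the $\varepsilon$-regularity of \cite{LaR}, which I expect to upgrade the critical $L^4$ bound on $\nabla u$ to a uniform estimate of the shape $\|\nabla u\|_{L^\infty(B_1)}^4\leq C\varepsilon_0$; combined with the Poincar\'{e}-type inequality $\|\phi\|_{L^2}\leq C\|\Delta\phi\|_{L^2}$ this yields $\int|\nabla u|^4|\phi|^2\leq C\varepsilon_0\,\|\Delta\phi\|_{L^2}^2$. Choosing $\varepsilon_0$ small enough allows absorption and produces $\int|\Delta\phi|^2\leq 4\int|\tau(v)-\tau(u)|^2$.

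For the second inequality I will use the identity $\int|\tau(v)|^2-|\tau(u)|^2=\int|\tau(v)-\tau(u)|^2+2\int\tau(u)\cdot(\tau(v)-\tau(u))$ to reduce the claim to $\int\tau(u)\cdot(\tau(u)-\tau(v))\leq\tfrac{1}{4}\int|\tau(v)-\tau(u)|^2$. Expanding via $\tau(u)\perp u$ and $v=u+\phi$, the left hand side becomes
\[
-\int\Delta u\cdot\Delta\phi-\int|\nabla u|^2u\cdot\Delta\phi-\int|\nabla v|^2\tau(u)\cdot\phi.
\]
I will then test the intrinsic bi-harmonic equation \eqref{ibe02} against $\phi$: integration by parts twice turns the left side into $\int\Delta u\cdot\Delta\phi$, and a direct sphere computation gives $V\cdot\nabla u=|\nabla u|^2u$, producing the identity $\int\Delta u\cdot\Delta\phi=\int|\nabla u|^2u\cdot\Delta\phi-\int w\nabla u\cdot\nabla\phi+\int W\cdot\nabla u\cdot\phi$. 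Substituting this in and repeatedly exploiting $u\cdot\phi=-\tfrac{1}{2}|\phi|^2$ after further integration by parts rearranges the surviving linear-in-$\phi$ pieces into quadratic-in-$\phi$ expressions, each of which I bound by $C\varepsilon_0^{a}\|\Delta\phi\|_{L^2}^2$ for some $a>0$ using H\"{o}lder, Sobolev embeddings, and the refined $\varepsilon$-regularity. Combining with the first inequality $\|\Delta\phi\|_{L^2}^2\leq 4\|\tau(v)-\tau(u)\|_{L^2}^2$ and shrinking $\varepsilon_0$ if necessary then closes the estimate.

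The hardest step, and the essential new analytic input beyond the second-order case of \cite{CM08,LL13}, will be controlling the critical-scaling cross terms such as $\int|\nabla u|^4|\phi|^2$ in dimension four. A naive H\"{o}lder split with $\nabla u\in L^4$ and $\phi\in L^\infty$ fails because $W^{2,2}(B_1)$ does not embed into $L^\infty$ in dimension four, while trading $\phi\in L^p$ for $p<\infty$ forces $\nabla u\in L^{p'}$ with $p'>4$, which is not available at the $W^{2,2}$ level for generic maps into the sphere. Promoting $\nabla u$ to $L^\infty$ with norm controlled by a small power of $\varepsilon_0$, via the refined $\varepsilon$-regularity of \cite{LaR}, is what I expect to save the argument.
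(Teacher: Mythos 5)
Your outline follows the paper's overall architecture closely: prove the comparison $\int|\Delta\phi|^2\leq 4\int|\tau(v)-\tau(u)|^2$ first, then test the intrinsic bi-harmonic equation against $\phi=v-u$ and exploit the sphere identity $u\cdot\phi=-\tfrac12|\phi|^2$ to convert the linear-in-$\phi$ pairings into quadratic ones. However, there is a concrete gap exactly at the step you yourself flag as the hardest: the control of $\int_{B_1}|\nabla u|^4|\phi|^2$. You propose to dispatch it by claiming the refined $\eps$-regularity upgrades $\nabla u$ to $L^\infty(B_1)$ with norm $\lesssim\varepsilon_0^{1/4}$. This is false. Interior $\eps$-regularity for a $W^{2,2}$ map with small bi-energy only yields a \emph{weighted} pointwise estimate of the form
\[
|\nabla u|(x)\,\leq\,\frac{C}{1-|x|}\,\|\Delta u\|_{L^2(B_1)}\,,
\]
which necessarily degenerates at $\partial B_1$: there is no way to bound $\|\nabla u\|_{L^\infty(B_1)}$ by $\|\Delta u\|_{L^2(B_1)}$ alone without information about boundary regularity, and the hypotheses of the theorem give none. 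Consequently the combination ``$\|\nabla u\|_{L^\infty}$ small $+$ Poincar\'e'' does not close.

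The missing ingredient is a \emph{second-order Hardy inequality}: since $\phi$ has vanishing Dirichlet and Neumann traces, one has
\[
\int_{B_1}\frac{|\phi|^2}{(1-|x|)^4}\,dx\,\leq\,C\int_{B_1}|\Delta\phi|^2\,dx\,.
\]
Pairing this weighted Poincar\'e estimate with the genuinely available pointwise $\eps$-regularity bound $|\nabla u|^4(x)\leq C\varepsilon_0(1-|x|)^{-4}$ gives precisely $\int|\nabla u|^4|\phi|^2\leq C\varepsilon_0\int|\Delta\phi|^2$, which is what you need. The same substitution is required in the second half of your argument, where terms like $\int|\tau(u)|\,|\phi|\,|\nabla(v+u)|\,|\nabla\phi|$ and $\int|P^\perp(\Delta^2 u)|\,|\phi|^2$ appear: each gets closed by the weighted pointwise $\eps$-regularity combined with the Hardy inequality, not by an $L^\infty$ bound on derivatives of $u$. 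Once you replace the false $L^\infty$ claim by this Hardy mechanism, the rest of your plan (in particular the reduction via $\int|\tau(v)|^2-|\tau(u)|^2=\int|\tau(v)-\tau(u)|^2+2\int\tau(u)\cdot(\tau(v)-\tau(u))$, the integration by parts against the bi-harmonic equation, and the repeated use of $u\cdot\phi=-\tfrac12|\phi|^2$) is sound and essentially reproduces the paper's proof.
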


\begin{rem}\label{auto}
One of the benefits when the target manifold is $\mathbf{S}^n$ is that we automatically have $\int_{B_1} \vert \nabla u\vert^4 \, dx \,\leq\, \int_{B_1} \vert \Delta u\vert^2 \, dx $.
\end{rem}

An immediate corollary of Theorem \ref{THM1} is the uniqueness of weakly intrinsic bi-harmonic maps into spheres with small bi-energy on $B_1$.
\begin{cor}\label{COR1}
There exists a constant $\varepsilon_0>0$ such that for all weakly intrinsic bi-harmonic maps (in particular, weakly harmonic maps) $u, v \in W^{2,2}(B_1, \mathbf{S}^n)$ with energies 
$$\int_{B_1} \vert \Delta u\vert^2 \, dx \leq \varepsilon_0\,,\quad \int_{B_1}  \vert \Delta v\vert^2 \, dx \leq \varepsilon_0\,,$$
and $u|_{\partial B_1}=v|_{\partial B_1},\partial_\nu u |_{\partial B_1}=\partial_\nu v|_{\partial B_1}$, we have $u \equiv v$ in $B_1$.
\end{cor}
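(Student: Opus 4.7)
The plan is to derive this corollary directly from Theorem \ref{THM1} by applying it twice, interchanging the roles of $u$ and $v$. First, the hypotheses need to be checked in both orderings. Since both maps take values in $\mathbf{S}^n$, the pointwise decomposition $\Delta u = \tau(u) - |\nabla u|^2 u$ together with the orthogonality $u \perp \tau(u)$ yields the pointwise identity $|\Delta u|^2 = |\tau(u)|^2 + |\nabla u|^4$, so the equality assumption appearing in Theorem \ref{THM1} is automatic for spherical targets. Combined with Remark \ref{auto}, the smallness hypotheses $\int_{B_1}|\Delta u|^2, \int_{B_1}|\Delta v|^2 \leq \varepsilon_0$ force $\int_{B_1}|\nabla u|^4, \int_{B_1}|\nabla v|^4 \leq \varepsilon_0$ as well, so Theorem \ref{THM1} applies with either map designated as the weak intrinsic bi-harmonic comparison map, and the matching boundary data $u|_{\partial B_1} = v|_{\partial B_1}$, $\partial_\nu u|_{\partial B_1} = \partial_\nu v|_{\partial B_1}$ is symmetric in $u$ and $v$.

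Applying the outer inequality in \eqref{Convexity1} in both orderings gives
\begin{align*}
\int_{B_1}|\tau(v)|^2 - \int_{B_1}|\tau(u)|^2 &\,\geq\, \tfrac{1}{2} \int_{B_1} |\tau(v) - \tau(u)|^2, \\
\int_{B_1}|\tau(u)|^2 - \int_{B_1}|\tau(v)|^2 &\,\geq\, \tfrac{1}{2} \int_{B_1} |\tau(u) - \tau(v)|^2,
\end{align*}
and summing the two forces $\int_{B_1}|\tau(u) - \tau(v)|^2 = 0$, so that $\tau(u) = \tau(v)$ almost everywhere in $B_1$. Feeding this back into the leftmost inequality in \eqref{Convexity1} then produces $\int_{B_1}|\Delta u - \Delta v|^2 = 0$, so $\Delta u = \Delta v$ almost everywhere in $B_1$.

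Finally, the difference $w := u - v \in W^{2,2}(B_1, \mathbf{R}^{n+1})$ satisfies $\Delta w = 0$ weakly with vanishing Dirichlet trace $w|_{\partial B_1} = 0$, so by uniqueness for the Dirichlet problem for the Laplacian $w \equiv 0$, i.e.\ $u \equiv v$ in $B_1$. There is no serious obstacle in this argument: the entire technical weight is carried by Theorem \ref{THM1}, and the corollary follows by symmetrization between the two maps together with the standard uniqueness of the harmonic extension.
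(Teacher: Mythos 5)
Your proof is correct and is exactly the argument the paper intends — the paper simply labels the corollary as ``immediate'' from Theorem \ref{THM1} without writing it out. The symmetrization (applying \eqref{Convexity1} with $u$ and $v$ interchanged, which is legitimate since the hypotheses are symmetric thanks to the pointwise identity $|\Delta u|^2 = |\tau(u)|^2 + |\nabla u|^4$ on $\mathbf{S}^n$ and Remark \ref{auto}), followed by concluding $\Delta(u-v)=0$ and invoking uniqueness for the Dirichlet problem, is the standard and intended route.
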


\begin{rem} \label{remextrinsic} The corresponding energy convexity and uniqueness for weakly harmonic maps from the unit $2$-disk into general closed target manifolds were proved by Colding and Minicozzi in \cite{CM08}, see also Lamm and the second author \cite{LL13}. The energy convexity for weakly extrinsic bi-harmonic maps from the unit $4$-ball $B_1$ into $\mathbf{S}^n$ was shown in Hineman-Huang-Wang \cite[Theorem 3.2]{HHW14}, under the stronger assumption $$\int_{B_1} \vert \nabla^2 u\vert^2\, dx \leq\varepsilon_0\,,$$ 
we provide a simpler self-contained proof of this result for spherical targets under the assumption 
$$\int_{B_1} \vert \Delta u\vert^2\, dx \leq\varepsilon_0\,,$$ 
see Theorem \ref{extrinsicbiharmonic}. Note also that the smallness condition $\int_{B_1} \vert \Delta u\vert^2 \, dx = \int_{B_1} \vert \tau(u)\vert^2 + \vert \nabla u\vert^4\, dx \leq\varepsilon_0$ is necessary: a harmonic map is an intrinsic bi-harmonic map as an absolute minimizer of the intrinsic bi-energy functional, but its Dirichlet energy $\int_{B_1} \vert \nabla u\vert^2 \,dx$ could be large and the uniqueness is not possible, c.f. Brezis-Coron \cite{BC83}.
\end{rem}

Thanks to the robustness of our method, which essentially relies on an improved $\eps$-regularity result that follows from the technics developed by Lamm-Rivi\`ere \cite{LR} and the first author with Rivi\`ere \cite{LaR}, we are able to prove a similar energy convexity along the intrinsic bi-harmonic heat flow into $\mathbf{S}^n$ treated as a perturbation of the stationary equation. One of the key observations of this paper is that, such energy convexity along the flow yields the \textit{coercivity} for the intrinsic bi-energy $I(u)$ and thus proving the long time existence and uniform convergence of the intrinsic bi-harmonic map heat flow into $\mathbf{S}^n$ with small initial bi-energy. The corresponding results for the extrinsic bi-harmonic map heat flow with small initial bi-energy into general target manifolds were claimed in \cite[Corollary 1.8]{HHW14} (which also follows directly from our method for spherical targets, c.f. Theorem \ref{extrinsicbiharmonic}), but it seems they need a smallness condition on the initial Hessian energy rather than the bi-energy in order to prove such results, see \cite[equations (1.14) and (5.58)]{HHW14}. We will address the intrinsic bi-harmonic map and its heat flow into general target manifolds in a forthcoming paper.

\begin{thm}\label{THM2}
There exists a constant $\varepsilon_0>0$ such that if $u_0 \in C^\infty (\overline{B_1},\mathbf{S}^n)$ with \begin{equation}\label{initialcondition}
\int_{B_1} |\Delta u_0|^2 \, dx \leq \varepsilon_0\,,
\end{equation}
then the initial-boundary value problem for the intrinsic bi-harmonic map heat flow \eqref{bfi} has a smooth solution $u \in C^{\infty}(\overline{B_1} \times [0,\infty), \mathbf{S}^n)$ for all time. Moreover, there exists $T_1>0$ such that along the flow there holds an energy convexity
\begin{equation}\label{MainConv}
\frac{1}{16}\int_{B_1} |\Delta u (\cdot, t_1)-\Delta u(\cdot, t_2)|^2\,\leq \, \int_{B_1} |\tau(u)(\cdot, t_1)|^2-\int_{B_1}|\tau(u)(\cdot, t_2)|^2
\end{equation}
for all $t_2 > t_1\geq T_1$.
\end{thm}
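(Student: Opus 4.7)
I will view each time-slice $u(\cdot,t)$ of the flow as an approximate solution of the stationary intrinsic bi-harmonic equation \eqref{ibe02}, whose $L^2$ defect is precisely $-\partial_t u$. A perturbed version of the convexity from Theorem~\ref{THM1}, together with the space-time $L^2$ integrability of $\partial_t u$ coming from \eqref{eii}, is the engine driving both global existence and the quantitative convexity \eqref{MainConv}.

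\textbf{Step 1: Global existence.} Lamm~\cite{Lamm01} and Mantegazza-Martinazzi~\cite{MM12} supply a unique smooth solution on a maximal interval $[0,T^*)$. From \eqref{eii} and \eqref{initialcondition} I obtain
\be
\int_{B_1}|\tau(u)(\cdot,t)|^2\,dx \,\le\, \eps_0 \quad \text{and} \quad \int_0^{T^*}\!\!\int_{B_1}|\partial_t u|^2\,dx\,dt \,\le\, \tfrac{1}{2}\eps_0\,.
\ee
Since the spherical identity $\int|\Delta u|^2 = \int|\tau(u)|^2 + \int|\nabla u|^4$ of Remark~\ref{auto} must be kept small to invoke $\eps$-regularity, I run a continuation argument: define $T$ as the supremum of times $s\in[0,T^*)$ on which $\int|\Delta u(\cdot,r)|^2\le C\eps_0$ for all $r\in[0,s]$ and a fixed $C>1$. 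On $[0,T]$ the refined parabolic $\eps$-regularity derived from Laurain-Rivi\`ere~\cite{LaR} (treating $\partial_t u$ as an $L^2$-source in \eqref{bfi}) provides full spatial regularity, and a perturbed form of the stationary convexity --- comparing $u(\cdot,t)$ to a genuine intrinsic bi-harmonic map $\widetilde u$ with the same boundary data, with error controlled by $\|\partial_t u(\cdot,t)\|_{L^2}$ --- improves the bound at time $T$ from $C\eps_0$ to $(C/2)\eps_0$, precluding $T<T^*$ and hence yielding $T^*=\infty$.

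\textbf{Step 2: Convexity \eqref{MainConv}.} For $t_1<t_2$, set $u_i := u(\cdot,t_i)$; since the boundary data is time-independent, $u_1-u_2$ and $\partial_\nu(u_1-u_2)$ vanish on $\partial B_1$. Replaying the proof of the stationary convexity \eqref{Convexity1} with $u:=u_2$ (the more nearly bi-harmonic slice, as $\int|\tau(u_2)|^2\le\int|\tau(u_1)|^2$) and $v:=u_1$, I expand
\be
\int|\tau(u_1)|^2 - \int|\tau(u_2)|^2 \,=\, 2\int \tau(u_2)\cdot(\tau(u_1)-\tau(u_2))\,dx + \int|\tau(u_1)-\tau(u_2)|^2\,dx\,.
\ee
In the stationary setting the cross term is handled via the Euler-Lagrange equations \eqref{ibe02}-\eqref{ibe03} for $u_2$; here each substitution generates an additional perturbation of the form $-\int \partial_t u(\cdot,t_2)\cdot(u_1-u_2)\,dx$ plus higher-order analogues in derivatives of $u_2$, each controlled by $C\|\partial_t u(\cdot,t_2)\|_{L^2}\|\Delta(u_1-u_2)\|_{L^2}$ through the Poincar\'e-type inequality $\|u_1-u_2\|_{L^2}\le C\|\Delta(u_1-u_2)\|_{L^2}$ (valid by the vanishing boundary trace and normal derivative). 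Choosing $T_1$ large enough that $\|\partial_t u(\cdot,t)\|_{L^2}$ is sufficiently small for all $t\ge T_1$ --- possible because $\int_0^\infty\|\partial_t u\|_{L^2}^2\,dt<\infty$, and parabolic regularity applied to \eqref{bfi} differentiated in time upgrades this to pointwise-in-$t$ smallness --- Young's inequality absorbs the perturbation into $\tfrac{1}{16}\int|\Delta(u_1-u_2)|^2$, accounting for the degradation of the stationary coefficient $\tfrac18$ in \eqref{Convexity1} into $\tfrac{1}{16}$ in \eqref{MainConv}.

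\textbf{Main obstacle.} Step~1 is the technical heart: the intrinsic bi-energy $I$ is not $W^{2,2}$-coercive, so the monotonicity of $\int|\tau(u)|^2$ does not by itself preclude concentration of $\int|\nabla u|^4$ --- this is exactly what forced Lamm~\cite{Lamm05} to assume non-positive target curvature, and the perturbed convexity is precisely what supplies the missing coercivity \emph{a posteriori}. Closing the bootstrap without circularity therefore requires the parabolic version of the refined $\eps$-regularity of \cite{LaR} to yield enough spatial regularity from bi-energy smallness alone, independently of the convexity itself. A secondary subtlety in Step~2 is the upgrade from the space-time $L^2$ smallness of $\partial_t u$ to pointwise-in-$t$ smallness of $\|\partial_t u(\cdot,t)\|_{L^2_x}$ needed to pick a single $T_1$ that works uniformly for all $t\ge T_1$.
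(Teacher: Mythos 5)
Your overall plan --- viewing each time slice as an approximate stationary intrinsic bi-harmonic map with defect $-\partial_t u$, using the parabolic $\eps$-regularity from \cite{LaR}, and bootstrapping both regularity and energy control --- is the correct high-level strategy, and matches the paper in spirit. However, there is a genuine gap in Step 2, and Step 1 as stated is on shaky ground.

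The gap in Step 2 is the absorption of the perturbation term. Replaying Lemma~\ref{bienergycon} with $\Delta^2 u_2 = -(u_t)_2 + Q(u_2)$ produces exactly one extra term, $2\int_{B_1}\langle -(u_t)_2, u_1-u_2\rangle\,dx$ (there are no ``higher-order analogues''). You propose to control it by $C\|\partial_t u(\cdot,t_2)\|_{L^2}\|\Delta(u_1-u_2)\|_{L^2}$ and absorb it by Young's inequality, arranging $T_1$ so that $\|\partial_t u(\cdot,t_2)\|_{L^2}$ is pointwise small. But Young's inequality leaves a remainder proportional to $\|\partial_t u(\cdot,t_2)\|_{L^2}^2$, and no amount of pointwise smallness makes that remainder vanish: it is a fixed positive quantity, while the other side of the desired inequality \eqref{MainConv}, $\int|\tau(u_1)|^2-\int|\tau(u_2)|^2$, can be arbitrarily small when $t_2-t_1$ is small. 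The inequality you need must hold for \emph{all} $t_2>t_1\ge T_1$, including $t_2\to t_1^+$, so this absorption does not close. The paper avoids this entirely with a scale-free estimate: write $u_1-u_2 = -\int_{t_1}^{t_2}u_t\,dt$ so that $\|u_1-u_2\|_{L^2}\le \sqrt{t_2-t_1}\big(\int_{t_1}^{t_2}\|u_t\|_{L^2}^2\,dt\big)^{1/2}$, and prove a monotonicity-type bound (Proposition~\ref{p1}) giving $\|u_t(\cdot,t_2)\|_{L^2}\le (t_2-t_1)^{-1/2}\big(\int_{t_1}^{t_2}\|u_t\|_{L^2}^2\,dt\big)^{1/2}$. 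Multiplying these, the $\sqrt{t_2-t_1}$ cancels and one gets
$$2\Big|\int_{B_1}\langle (u_t)_2, u_1-u_2\rangle\,dx\Big| \le 2\int_{t_1}^{t_2}\int_{B_1}|u_t|^2\,dx\,dt \le \int_{B_1}|\tau(u_1)|^2\,dx - \int_{B_1}|\tau(u_2)|^2\,dx\,,$$
the last step via the energy inequality \eqref{eii}. This bounds the perturbation by exactly the quantity appearing on the good side of \eqref{MainConv}, which is what degrades $\tfrac18$ to $\tfrac1{16}$. This cancellation is the key idea your proposal is missing; Proposition~\ref{p1} (proved by differentiating the flow in $t$, testing against $u_t$, and applying $\eps$-regularity with $f=-u_t$ together with the Hardy inequality) is not merely a technical upgrade but the precise quantitative statement needed.

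Step 1 as written compares $u(\cdot,t)$ to ``a genuine intrinsic bi-harmonic map $\widetilde u$ with the same boundary data.'' The existence of such a map with controlled bi-energy is not established at this stage --- indeed it is a \emph{consequence} of Theorem~\ref{THM2} via Corollary~\ref{COR2} --- so this risks circularity. The paper instead gets long-time existence directly from the convexity between flow slices: \eqref{MainConv} on $[T_1,T_2]$, combined with the monotonicity of $\int|\tau(u)|^2$, gives a uniform (small) bound on $\int|\Delta u(\cdot,t)|^2$, which via $\eps$-regularity controls all higher norms; this is then iterated on $[T_2, 2T_2-T_1]$, etc. No auxiliary bi-harmonic comparison map is invoked.
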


Immediate application of Theorem \ref{THM2} is the following corollary.

\begin{cor}\label{COR2}
There exists a constant $\varepsilon_0>0$ such that if $u_0 \in C^\infty (\overline{B_1},\mathbf{S}^n)$ with $\int_{B_1} \vert \Delta u_0\vert^2\, dx \leq \varepsilon_0$, then the initial-boundary value problem for the intrinsic bi-harmonic map heat flow \eqref{bfi} has a smooth solution $u \in C^{\infty}(\overline{B_1}\times [0,\infty), \mathbf{S}^n)$ such that
\begin{equation} u(\cdot,t) \to u_{\infty} \text{ uniformly as } t \to +\infty \text{ strongly in } W^{2,2}(B_1, \mathbf{R}^{n+1})\,,
\end{equation}
where $u_{\infty}$ is the unique smooth intrinsic bi-harmonic map with $u_\infty|_{\partial B_1} = \chi$ and $\partial_\nu u_\infty|_{\partial B_1} = \xi$.
\end{cor}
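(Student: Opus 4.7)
The plan is to extract the $t\to\infty$ limit directly from Theorem \ref{THM2}, identify it via Corollary \ref{COR1}, and upgrade the resulting $W^{2,2}$ convergence to uniform convergence by means of the refined $\epsilon$-regularity that underlies the paper.

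Theorem \ref{THM2} supplies the global smooth solution $u$ together with the convexity \eqref{MainConv} for all $t_2>t_1\geq T_1$. The energy inequality \eqref{eii} makes $\Phi(t):=\int_{B_1}|\tau(u)(\cdot,t)|^2\,dx$ non-increasing, hence $\Phi(t)\searrow\ell\geq 0$ as $t\to\infty$. Inserting this into \eqref{MainConv} shows that $\{\Delta u(\cdot,t)\}_{t\geq T_1}$ is Cauchy in $L^2(B_1)$. Because the Dirichlet data $u=\chi$ and $\partial_\nu u=\xi$ are fixed along the flow, the difference $u(\cdot,t)-u(\cdot,t')$ lies in $W^{2,2}_0(B_1)$, where the biharmonic Poincar\'e inequality $\|v\|_{W^{2,2}}\lesssim\|\Delta v\|_{L^2}$ applies. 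This promotes the Cauchy property of $\Delta u(\cdot,t)$ to strong convergence $u(\cdot,t)\to u_\infty$ in $W^{2,2}(B_1,\mathbf{R}^{n+1})$; lower semicontinuity preserves both the constraint $u_\infty\in\mathbf{S}^n$ a.e.\ and the bound $\int_{B_1}|\Delta u_\infty|^2\,dx\leq\varepsilon_0$, while the prescribed traces $\chi,\xi$ are preserved.

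Next I would identify $u_\infty$ as a weakly intrinsic bi-harmonic map. Since \eqref{eii} yields $\int_0^\infty\|\partial_t u(\cdot,s)\|_{L^2}^2\,ds<\infty$, some sequence $s_k\to\infty$ satisfies $\|\partial_t u(\cdot,s_k)\|_{L^2}\to 0$. Testing \eqref{bfi} at $s_k$ against a smooth compactly supported test function and using the strong $W^{2,2}$ convergence, together with Sobolev multiplication in dimension four to show that every nonlinear term in \eqref{ibe} is continuous on $W^{2,2}$, allows passage to the limit, so that $u_\infty$ is weakly intrinsic bi-harmonic with small bi-energy and the correct boundary data. Corollary \ref{COR1} then makes $u_\infty$ unique, in particular independent of the chosen subsequence, so the full net $u(\cdot,t)$ converges. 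Smoothness of $u_\infty$ on $\overline{B_1}$ then follows from the refined $\epsilon$-regularity.

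Finally, for uniform convergence, the small-energy regime $\int_{B_1}|\Delta u(\cdot,t)|^2\,dx\leq\varepsilon_0$ for $t\geq T_1$ together with the refined $\epsilon$-regularity should furnish \emph{time-uniform} higher-order Sobolev bounds, for instance $\sup_{t\geq T_1}\|u(\cdot,t)\|_{W^{3,2}(B_1)}<\infty$. Since $W^{3,2}(B_1)\hookrightarrow C^{0,\alpha}(\overline{B_1})$ compactly in four dimensions, interpolating with the strong $W^{2,2}$ convergence gives $u(\cdot,t)\to u_\infty$ uniformly on $\overline{B_1}$. The main obstacle I expect is precisely this final step: \eqref{MainConv} alone only controls $W^{2,2}$, which in four dimensions is not enough for $C^0$ convergence, so one must propagate the stationary $\epsilon$-regularity estimate along the parabolic flow up to the boundary and uniformly in time, rather than at a single time slice. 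The remaining ingredients—Cauchyness of $\Delta u(\cdot,t)$, identification of the limit, and uniqueness—are essentially bookkeeping once Theorem \ref{THM2} and Corollary \ref{COR1} are in hand.
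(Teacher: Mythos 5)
Your argument correctly fills in what the paper leaves implicit: the paper supplies no separate proof of Corollary~\ref{COR2}, calling it an ``immediate application'' of Theorem~\ref{THM2}, and your three-step chain (Cauchy property of $\Delta u(\cdot,t)$ from \eqref{MainConv} plus monotonicity of $\int_{B_1}|\tau(u)|^2$, then the biharmonic Poincar\'e inequality on $W^{2,2}_0$ to upgrade to $W^{2,2}$ convergence, then identification of the limit via $\int_0^\infty\|\partial_t u\|_{L^2}^2\,dt<\infty$ and uniqueness via Corollary~\ref{COR1}) is exactly the intended route.

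Where you diverge is in the final step: you read ``uniformly'' as $C^0(\overline{B_1})$-uniform and correctly observe that this cannot follow from $W^{2,2}$ alone in dimension four, then propose propagating the interior $\eps$-regularity up to the boundary uniformly in time. This is likely more than the statement requires. In the context of the abstract and of the comparison with Lamm~\cite{Lamm05} (which only gives \emph{sub}-convergence along a time sequence), ``uniform convergence'' is almost certainly being used to mean that the \emph{whole} flow converges as $t\to\infty$ rather than only along a subsequence, with the topology of convergence being strong $W^{2,2}$ as stated. That is precisely what your Cauchy argument delivers, and in that reading your proof is complete. Your caution is not misplaced in the abstract -- if sup-norm convergence up to $\partial B_1$ really were intended, one would need boundary versions of Theorem~\ref{ereg} which the paper does not establish -- but this is an ambiguity in the statement rather than a gap in your argument. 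One small addition worth recording: smoothness of $u_\infty$ on $\overline{B_1}$ needs boundary regularity (the data $\chi,\xi$ are smooth since $u_0\in C^\infty(\overline{B_1})$), which the paper also leaves tacit.
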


\begin{rem}
 The corresponding uniform convergence result for harmonic map heat flow on the $2$-disk was proved by the second author in \cite{Lin13}, see also Wang \cite{Wang12}. \end{rem}
 
 The paper is organized as follows. In Section \ref{s1} we prove Theorem \ref{THM1}, the energy convexity and uniqueness of weakly intrinsic bi-harmonic maps with small bi-energy into spheres. We also give a self-contained proof of the energy convexity and uniqueness of weakly extrinsic bi-harmonic maps with small bi-energy into $\mathbf{S}^n$ (Theorem \ref{extrinsicbiharmonic}). We defer the proofs of the $\eps$-regularity theorem for approximate bi-harmonic maps (Theorem \ref{ereg}) and a technical lemma (Lemma \ref{lf}) to Appendices \ref{a1} and \ref{APPB} respectively. In Section \ref{section2} we prove Theorem \ref{THM2}, the long time existence and uniform convergence of the intrinsic bi-harmonic map heat flow with small initial bi-energy into spheres.
 

\section{Energy convexity and uniqueness of intrinsic bi-harmonic maps}
\label{s1}
This section devotes to the proof of Theorem \ref{THM1}, which contains most of the main ingredients for the the proof of Theorem \ref{THM2} (we will just treat the $u_t = \frac{\partial u}{\partial t}$ as a perturbation term thanks to the $\eps$-regularity Theorem \ref{ereg}). We first need the following second order Hardy inequality, see e.g. Edmunds-R\'akosn\'\i k \cite{ER99} or Hineman-Huang-Wang \cite[Lemma 3.1]{HHW14} (there is a typo in the statement of \cite[Lemma 3.1]{HHW14}: the $f$ in the statement should also satisfy the zero Neumann boundary condition).

\begin{thm}[Hardy inequality]\label{hardyineq}
There exists a constant $C>0$ such that if $w \in W^{2,2}(B_1, \mathbf{R}^{n+1})$ with $w|_{\partial B_1}=0, \partial_\nu w |_{\partial B_1}= 0,$ then we have
\begin{equation}\label{hardyest}
\int_{B_1} \vert w\vert^2  (1-|x|)^{-4} \; dx  \,\leq \,C \int_{B_1} \vert \Delta w\vert^2\;dx \,.
\end{equation}
\end{thm}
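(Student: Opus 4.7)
The plan is to derive \eqref{hardyest} by iterating the classical first-order Hardy inequality, using crucially that \emph{both} $w$ and $\partial_\nu w$ vanish on $\partial B_1$. By $W^{2,2}$-density it suffices to assume $w\in C^\infty(\overline{B_1})$ satisfying the two boundary conditions.

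The first step is to establish the intermediate bound
\begin{equation*}
\int_{B_1} \frac{|\nabla w|^2}{(1-|x|)^2}\, dx \,\leq\, C \int_{B_1} |\Delta w|^2\, dx\,.
\end{equation*}
The observation is that $w|_{\partial B_1} = 0$ forces the tangential derivatives of $w$ to vanish on $\partial B_1$, and combined with $\partial_\nu w|_{\partial B_1} = 0$ this gives $\nabla w \equiv 0$ on $\partial B_1$. Each component of $\nabla w$ therefore lies in $W^{1,2}_0(B_1)$, so the classical Hardy inequality applied componentwise yields $\int|\nabla w|^2/(1-|x|)^2\le C\int|\nabla^2 w|^2$. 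Two integrations by parts, whose boundary terms all vanish because $\nabla w=0$ on $\partial B_1$, yield the Bochner-type identity $\int|\nabla^2 w|^2=\int|\Delta w|^2$, and the bound follows.

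The second step is to promote this to the desired $\int |w|^2/(1-|x|)^4 \leq C \int |\Delta w|^2$ via a weighted Hardy inequality
\begin{equation*}
\int_{B_1} \frac{|w|^2}{(1-|x|)^4}\, dx \,\leq\, C \int_{B_1} \frac{|\nabla w|^2}{(1-|x|)^2}\, dx\,,
\end{equation*}
which is legitimate because the vanishing of both $w$ and $\partial_\nu w$ forces $|w(x)|\lesssim(1-|x|)^2$ near $\partial B_1$ (by Taylor expansion along the inward normal). Concretely, in polar coordinates $x=r\omega$, integrating twice using $w(1,\omega)=\partial_r w(1,\omega)=0$ produces
\begin{equation*}
w(r\omega) \,=\, \int_r^1 (t-r)\,\partial_t^2 w(t\omega)\, dt\,,
\end{equation*}
after which a Cauchy-Schwarz in $t$ with an appropriately chosen weight $(1-t)^\alpha$ followed by a Fubini swap yields the inequality.

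The main obstacle will be exactly this weighted step. A naive absorption argument (set $\tilde w = w/(1-|x|)$ and apply the classical Hardy to $\tilde w$) reproduces $\int|w|^2/(1-|x|)^4$ on both sides and demands a sharp Hardy constant that is not available in a bounded domain. In the radial argument above the weight exponent $\alpha$ must be chosen negative enough that the resulting $r$-integral $\int_0^t(1-r)^{-1-\alpha}r^{3}\,dr$ converges up to $t=1$, yet compatible with the target right-hand-side weight $(1-t)^{-2}$; this delicate bookkeeping is the reason the authors prefer to cite the weighted Hardy-Rellich inequality from Edmunds-R\'akosn\'\i k \cite{ER99} rather than reproduce a proof.
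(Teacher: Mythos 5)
Note first that the paper does not actually prove Theorem~\ref{hardyineq}; it cites Edmunds--R\'akosn\'\i k \cite{ER99} and Hineman--Huang--Wang \cite[Lemma 3.1]{HHW14} for the statement. So your argument is not being compared against an in-paper proof --- there is none --- but it is worth examining on its own merits, and it is essentially correct. Step~1 is exactly right: the trace of $\nabla w$ on $\partial B_1$ vanishes (tangential derivatives vanish because $w|_{\partial B_1}=0$ and the normal derivative vanishes by hypothesis), so each component $\partial_i w$ lies in $W^{1,2}_0(B_1)$, the classical first-order Hardy inequality gives $\int_{B_1}|\nabla w|^2(1-|x|)^{-2}\,dx\le C\int_{B_1}|\nabla^2 w|^2\,dx$, and two integrations by parts, with boundary terms killed by $\nabla w|_{\partial B_1}=0$, yield $\int_{B_1}|\nabla^2 w|^2=\int_{B_1}|\Delta w|^2$.

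Your hesitation about Step~2, however, is unwarranted: the weighted inequality $\int_{B_1}|w|^2(1-|x|)^{-4}\,dx\le C\int_{B_1}|\nabla w|^2(1-|x|)^{-2}\,dx$ needs only $w|_{\partial B_1}=0$ and follows from a one-line Cauchy--Schwarz. Writing $w(r\omega)=-\int_r^1\partial_t w(t\omega)\,dt$ for a.e.\ $\omega\in \mathbf{S}^3$ and inserting the compensating weight $(1-t)^{-1}$ gives $|w(r\omega)|^2 \le \tfrac{1}{2}(1-r)^2\int_r^1|\partial_t w(t\omega)|^2(1-t)^{-1}\,dt$; multiplying by $(1-r)^{-4}r^3$ and swapping the order of integration, the only estimate one needs is the elementary $\int_0^t(1-r)^{-2}r^3\,dr\le C\,t^3(1-t)^{-1}$ for $t\in(0,1)$, checked by inspecting the asymptotics at $t\to 0$ and $t\to 1$. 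No sharp constant and no absorption are required, so the pitfall you flag with $\tilde w = w/(1-|x|)$ is simply avoided. A minor internal inconsistency to fix: in your ``concretely'' paragraph you invoke the second-order Taylor formula $w(r\omega)=\int_r^1(t-r)\,\partial_t^2 w(t\omega)\,dt$, which is the formula appropriate to proving $\int|w|^2(1-|x|)^{-4}\le C\int|\nabla^2 w|^2$ in a single radial step (and that version does genuinely use $\partial_\nu w|_{\partial B_1}=0$). Either route works, but your displayed formula proves a different inequality than the two-step reduction your surrounding prose promises; keep one of the two.
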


Next we give a proof of the energy convexity and uniqueness for weakly \textit{extrinsic} bi-harmonic maps into $\mathbf{S}^n$ that were already shown by Hineman, Huang and Wang in \cite{HHW14} under the stronger assumption $\int_{B_1} \vert \nabla^2 u\vert^2\, dx \leq\varepsilon_0$, see Remark \ref{remextrinsic}. 

\begin{thm}\label{extrinsicbiharmonic}
There exists a constant $\varepsilon_0>0$ such that if $u, v\in W^{2,2}(B_1, \mathbf{S}^n)$ with $u|_{\partial B_1}=v|_{\partial B_1},\partial_\nu u |_{\partial B_1}=\partial_\nu v|_{\partial B_1},$ 
$$\int_{B_1} \vert \Delta u\vert^2 \, dx \leq \varepsilon_0,,$$
and $u$ is a weakly extrinsic bi-harmonic map, then we have the energy convexity
\begin{equation}
\frac{1}{2}\int_{B_1} |\Delta v- \Delta u|^2\,\leq\,\int_{B_1} |\Delta v\vert^2 - \vert\Delta u|^2\,.
\end{equation}
Therefore, if additionally $v$ is also a weakly extrinsic bi-harmonic map with $\int_{B_1} \vert \Delta v\vert^2 \, dx \leq \varepsilon_0,,$ then $u \equiv v$ in $B_1$.
\end{thm}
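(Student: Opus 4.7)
Set $w = v - u$, so that $w \in W^{2,2}(B_1, \mathbf{R}^{n+1})$ with $w|_{\partial B_1} = 0$ and $\partial_\nu w|_{\partial B_1} = 0$; in particular Theorem \ref{hardyineq} applies to $w$. Expanding $|\Delta v|^2 - |\Delta u|^2 = |\Delta w|^2 + 2\,\Delta u \cdot \Delta w$, the stated convexity $\tfrac12 \int |\Delta w|^2 \le \int |\Delta v|^2 - |\Delta u|^2$ is equivalent to the cross-term bound
$$\left| \int_{B_1} \Delta u \cdot \Delta w \, dx \right| \le \tfrac{1}{4} \int_{B_1} |\Delta w|^2 \, dx.$$
Note that by Remark \ref{auto} one automatically has $\int_{B_1} |\nabla u|^4 \le \int_{B_1} |\Delta u|^2 \le \varepsilon_0$. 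Once the cross-term bound is proved, uniqueness is immediate from the convexity by swapping the roles of $u$ and $v$: the two convexity inequalities force $\int |\Delta w|^2 \le 0$, and combined with $w = \partial_\nu w = 0$ on $\partial B_1$ this forces $w \equiv 0$.

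To handle the cross term I would exploit two features of the spherical target. First, extrinsic bi-harmonicity of $u$ together with $\mathrm{codim}(\mathbf{S}^n) = 1$ forces $\Delta^2 u = \lambda \, u$ with $\lambda = u \cdot \Delta^2 u$; differentiating the constraint $|u|^2 \equiv 1$ twice gives $u \cdot \Delta u = - |\nabla u|^2$, and a further application of $\Delta$ produces
$$u \cdot \Delta^2 u \,=\, - \Delta |\nabla u|^2 - |\Delta u|^2 - 2\, \nabla u \cdot \nabla \Delta u.$$
Second, $|u| = |v| = 1$ gives the basic sphere identity $u \cdot w = - \tfrac12 |w|^2$. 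Integrating by parts twice (all boundary terms vanish since $w = \partial_\nu w = 0$ on $\partial B_1$) therefore yields
$$\int_{B_1} \Delta u \cdot \Delta w \, dx \,=\, \int_{B_1} \Delta^2 u \cdot w \, dx \,=\, - \tfrac{1}{2} \int_{B_1} (u \cdot \Delta^2 u) \, |w|^2 \, dx.$$
Substituting the formula for $u \cdot \Delta^2 u$ and performing further integrations by parts so as to shift every derivative off of $u$ and onto $|w|^2$ rewrites $\int \Delta u \cdot \Delta w$ as a finite sum of integrals of the schematic form $\int |\nabla u|^2 w \cdot \Delta w$, $\int |\nabla u|^2 |\nabla w|^2$, $\int |\Delta u|^2 |w|^2$, and $\int (\nabla u \cdot \Delta u) \cdot \nabla |w|^2$.

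The remaining analytic step is to bound each of these integrals by $C \sqrt{\varepsilon_0} \int |\Delta w|^2$. The plan is to pair the second-order Hardy inequality $\int |w|^2 (1 - |x|)^{-4} \le C \int |\Delta w|^2$ from Theorem \ref{hardyineq} with the scale-invariant weighted pointwise estimates
$$(1 - |x|) \, |\nabla u(x)| \,\le\, C \, \varepsilon_0^{1/4}, \qquad (1 - |x|)^2 \, |\Delta u(x)| \,\le\, C \sqrt{\varepsilon_0},$$
which should follow by applying the $\varepsilon$-regularity theorem (Theorem \ref{ereg}) to the rescaled map on the interior ball $B_{(1-|x|)/2}(x)$ and invoking the scale-invariance of the bi-energy in dimension four. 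A typical term is then controlled by
$$\int_{B_1} |\nabla u|^4 |w|^2 \, dx \,\le\, \|(1 - |x|) \nabla u\|_{L^\infty}^4 \int_{B_1} \frac{|w|^2}{(1 - |x|)^4} \, dx \,\le\, C \varepsilon_0 \int_{B_1} |\Delta w|^2 \, dx,$$
and the other terms succumb to the same pattern (possibly after one more integration by parts to trade a $\nabla |w|^2$ against $|\Delta u|$). Choosing $\varepsilon_0$ small enough that the accumulated constant is at most $\tfrac14$ yields the cross-term bound and hence the convexity. The main obstacle I anticipate is making the weighted pointwise estimates effective uniformly up to $\partial B_1$, since the $\varepsilon$-regularity of Lamm-Rivière \cite{LR} and \cite{LaR} is naturally an interior statement; this is precisely where the refinement of the $\varepsilon$-regularity established in Appendix \ref{a1} plays its role.
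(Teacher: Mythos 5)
Your proposal is correct and rests on exactly the same pillars as the paper's argument: the second-order Hardy inequality of Theorem \ref{hardyineq}, the weighted pointwise estimates delivered by the refined $\eps$-regularity of Theorem \ref{ereg}, and the fact that for maps into $\mathbf{S}^n$ the difference $w=v-u$ has quadratically small normal part. The route is genuinely a bit different in how the normal component of $\Delta^2 u$ is handled. The paper treats it abstractly: extrinsic bi-harmonicity gives $\Delta^2 u=P^\perp(\Delta^2 u)$, one pairs this with the geometric estimate $|(v-u)^\perp|\le C|v-u|^2$ (Colding--Minicozzi's Lemma A.1) and the single pointwise bound $|P^\perp(\Delta^2 u)|\le C\varepsilon_0(1-|x|)^{-4}$ — obtained by substituting the structure of \eqref{ebe} into the $\eps$-regularity estimates for $\nabla u,\nabla^2 u,\nabla^3 u$ — and the cross term falls to Hardy in one stroke. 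You instead use the codimension-one specialty of the sphere to write $\Delta^2 u=(u\cdot\Delta^2 u)\,u$, unpack $u\cdot\Delta^2 u=-\Delta|\nabla u|^2-|\Delta u|^2-2\nabla u\cdot\nabla\Delta u$ from $|u|^2\equiv1$, pair it with $u\cdot w=-\tfrac12|w|^2$, and integrate by parts to push derivatives off $u$ and onto $|w|^2$. This avoids third derivatives of $u$ at the cost of a term-by-term analysis; note that $\int|\nabla u|^2|\nabla w|^2$ wants the embedding $W_0^{2,2}\hookrightarrow W^{1,4}$ (estimate \eqref{control1}) rather than Hardy, and $\int(\nabla u\cdot\Delta u)\cdot\nabla|w|^2$ wants either a first-order Hardy inequality for $\nabla w$ or one more integration by parts as you indicate, but these are all available. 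Both arguments work; the paper's packaging is shorter, while your variant has the minor advantage of making explicit how the orthogonality of $\Delta^2 u$ propagates through the sphere constraint. One small remark: your claimed pointwise bound $(1-|x|)|\nabla u|\le C\varepsilon_0^{1/4}$ is weaker than what \eqref{PTWSest} actually delivers, namely $(1-|x|)|\nabla u|\le C\sqrt{\varepsilon_0}$; either suffices, but the sharper estimate is there for the taking.
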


\begin{proof}
For extrinsic bi-harmonic maps we have:
\beq
\begin{split}
P^\perp (\Delta^2 u) \,= \,& \Delta^2 u\,,
\end{split}
\eeq
and thus
\be
\begin{split}
& \int_{B_1} |\Delta v|^2 \; dx - \int_{B_1} |\Delta u|^2 \; dx - \int_{B_1} |\Delta ( v - u)|^2 \\
=& 2  \int_{B_1} \langle \Delta^2 u, v-u\rangle\; dx \,=\, 2 \int_{B_1} \langle P^\perp (\Delta^2 u), v-u\rangle\; dx \\
\geq & -C  \int_{B_1} \vert (v-u)^\bot \vert \cdot\vert P^\perp \left(\Delta^2 u\right)\vert \; dx\\
\geq & - C\varepsilon_0 \int_{B_1} \vert v - u\vert^2 (1-|x|)^{-4}\; dx\\
\geq & - C\varepsilon_0 \int_{B_1}|\Delta (v - u)|^2\; dx\,,
\end{split}
\ee
where we used the fact that $\vert (v-u)^\bot \vert \leq C|v-u|^2$ for some $C >0$, see e.g. \cite[Lemma A.1]{CM08}\,, the $\eps$-regularity Theorem \ref{ereg}  (with $f\equiv 0$) and the Hardy inequality Theorem \ref{hardyineq}. Choosing $\varepsilon_0$ sufficiently small yields the desired energy convexity.
\end{proof}

Now let us focus on the energy convexity and uniqueness for weakly \textit{intrinsic} bi-harmonic maps into $\mathbf{S}^n$.

\begin{lemma}\label{conv2}
There exists a constant $\varepsilon_0>0$ such that if $u, v\in W^{2,2}(B_1, \mathbf{S}^n)$ with $u|_{\partial B_1}=v|_{\partial B_1},\partial_\nu u |_{\partial B_1}=\partial_\nu v|_{\partial B_1},$ 
\begin{equation}\label{smallcond}
\int_{B_1} \vert \Delta u\vert^2 \, dx \,=\, \int_{B_1} \vert \tau(u)\vert^2 + \vert \nabla u\vert^4 \, dx \leq\varepsilon_0 \quad \text{and}\quad\int_{B_1} \vert \nabla v\vert^4 \, dx \leq \varepsilon_0\,,
\end{equation}
and $u$ is a weakly intrinsic bi-harmonic map, then we have 
\begin{equation}
\int_{B_1} |\Delta (v-u)|^2 \, dx\,\leq\, 4\int_{B_1} |\tau(v)-\tau(u)|^2 \, dx\,.
\end{equation}
\end{lemma}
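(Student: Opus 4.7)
The plan is to exploit the simple algebraic identity for $\mathbf{S}^n$-valued maps
$$\Delta u \,=\, \tau(u) - |\nabla u|^2 u, \qquad \Delta v \,=\, \tau(v) - |\nabla v|^2 v,$$
which follows from differentiating $|u|^2 = 1$ (giving $u\cdot \Delta u = -|\nabla u|^2$) and the same for $v$. Setting $\phi := v - u$ one obtains the exact decomposition
$$\Delta \phi \,=\, (\tau(v) - \tau(u)) + N, \qquad N \,:=\, |\nabla u|^2 u - |\nabla v|^2 v,$$
so that the elementary inequality $(a+b)^2 \leq 2a^2 + 2b^2$ gives
$$\int_{B_1} |\Delta \phi|^2 \,\leq\, 2\int_{B_1} |\tau(v) - \tau(u)|^2 + 2 \int_{B_1} |N|^2.$$
The whole lemma therefore reduces to showing $\int_{B_1} |N|^2 \leq \tfrac{1}{4}\int_{B_1} |\Delta \phi|^2$ when $\varepsilon_0$ is sufficiently small.

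To estimate $N$ I would write $u = v - \phi$ to get $N = (|\nabla u|^2 - |\nabla v|^2)v - |\nabla u|^2 \phi$, and use $|v| = 1$ to obtain pointwise $|N|^2 \leq 2(|\nabla u|^2 - |\nabla v|^2)^2 + 2|\nabla u|^4 |\phi|^2$. For the first piece I would write $|\nabla u|^2 - |\nabla v|^2 = \nabla(u+v)\cdot \nabla \phi$ and apply H\"older together with the dimension-$4$ Sobolev embedding $\|\nabla \phi\|_{L^4(B_1)} \leq C \|\Delta \phi\|_{L^2(B_1)}$ (valid because $\phi$ vanishes to first order on $\partial B_1$) and the smallness $\int |\nabla u|^4 + \int |\nabla v|^4 \leq 2\varepsilon_0$ (the bound on $u$ coming from Remark \ref{auto}); this yields
$$\int_{B_1}(|\nabla u|^2 - |\nabla v|^2)^2 \,\leq\, C\varepsilon_0^{1/2} \int_{B_1} |\Delta \phi|^2.$$

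For the more subtle coupling term $\int |\nabla u|^4 |\phi|^2$ the bi-harmonic hypothesis on $u$ becomes essential. I would apply the $\varepsilon$-regularity theorem (Theorem \ref{ereg}, with $f \equiv 0$) on balls $B_r(x_0) \subset B_1$ with $r = (1-|x_0|)/2$ and rescale to the unit ball to get the interior pointwise bound
$$(1-|x|)^2\, |\nabla u(x)|^2 \,\leq\, C\varepsilon_0, \qquad x \in B_1.$$
Inserting this and invoking the second order Hardy inequality (Theorem \ref{hardyineq}), which applies since $\phi|_{\partial B_1} = \partial_\nu \phi|_{\partial B_1} = 0$, gives
$$\int_{B_1} |\nabla u|^4 |\phi|^2 \,\leq\, C\varepsilon_0^2 \int_{B_1} |\phi|^2 (1-|x|)^{-4} \,\leq\, C\varepsilon_0^2 \int_{B_1} |\Delta \phi|^2.$$
Summing the two pieces produces $\int_{B_1}|N|^2 \leq C(\varepsilon_0^{1/2} + \varepsilon_0^2) \int_{B_1} |\Delta \phi|^2$, which is at most $\tfrac{1}{4}\int_{B_1}|\Delta \phi|^2$ once $\varepsilon_0$ is small, closing the argument.

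The principal obstacle is precisely the coupling term $\int |\nabla u|^4 |\phi|^2$: since $W^{2,2}(B_1)$ does not embed into $L^\infty$ in dimension $4$, there is no quantitative pointwise control of $|\phi|$ to pull out, and naive H\"older splittings using only $\|\nabla u\|_{L^4}^4 \leq \varepsilon_0$ together with the trivial pointwise bound $|\phi|\leq 2$ stall at an estimate of the form $\int |\nabla u|^4|\phi|^2 \leq C\varepsilon_0$, which is a constant rather than a small multiple of $\int |\Delta \phi|^2$. It is exactly here that one must trade the mere integral smallness of $\nabla u$ for the scale-invariant pointwise bound produced by the refined $\varepsilon$-regularity theorem and pair it with the second order Hardy inequality. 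The asymmetric hypothesis of the lemma is explained by the asymmetric decomposition of $N$: only $|\nabla u|^4$, not $|\nabla v|^4$, requires a pointwise estimate, so $v$ does not need to solve any equation.
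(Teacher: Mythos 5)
Your proof is correct and follows the paper's argument essentially step by step: the same splitting $\Delta(v-u) = \tau(v)-\tau(u) - \bigl(v|\nabla v|^2 - u|\nabla u|^2\bigr)$, the same further decomposition of the remainder into $v(|\nabla v|^2 - |\nabla u|^2) + (v-u)|\nabla u|^2$, the same Sobolev-embedding estimate for $\int\bigl(|\nabla v|^2 - |\nabla u|^2\bigr)^2$, and the same combination of $\varepsilon$-regularity (Theorem \ref{ereg}) with the second-order Hardy inequality (Theorem \ref{hardyineq}) to control the coupling term $\int |\nabla u|^4|v-u|^2$. You spell out the $\varepsilon$-regularity/Hardy step for that last term more explicitly than the paper does (the paper records only the resulting bound $4\varepsilon_0\int|\Delta(v-u)|^2$ without naming the tools), which is a small gain in transparency but not a different route.
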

\begin{proof}
Note that
\beq
\begin{split}
\int_{B_1} |\Delta (v - u)|^2  \, dx = &\,\int_{B_1} \left|\tau(v)-\tau(u) - v|\nabla v|^2 + u|\nabla u|^2\right|^2 \, dx\\
\leq & \,2\int_{B_1} |\tau(v)-\tau(u)|^2 \, dx + 2 \int_{B_1} \left|v|\nabla v|^2 - u|\nabla u|^2\right|^2 \, dx\\
= &\,2\int_{B_1} |\tau(v)-\tau(u)|^2 \, dx + 2 \int_{B_1} \left|v(|\nabla v|^2 - |\nabla u|^2) + (v-u) |\nabla u|^2\right|^2 \, dx\\
\leq & \,2\int_{B_1} |\tau(v)-\tau(u)|^2 \, dx + 4 \int_{B_1} \left| |\nabla v|^2 - |\nabla u|^2\right|^2  \, dx+ 4\int_{B_1}|v-u|^2 | \nabla u|^4 \, dx\\
\leq & \,2\int_{B_1} |\tau(v)-\tau(u)|^2 \, dx + 4 \left(\int_{B_1} |\nabla (v+u)|^4  \, dx \right)^{\frac{1}{2}}\left(\int_{B_1} |\nabla (v-u)|^4  \, dx \right)^{\frac{1}{2}}\\
&+ 4\varepsilon_0\int_{B_1}|\Delta (v-u)|^2 \, dx\\
\leq &\, 2\int_{B_1} |\tau(v)-\tau(u)|^2 \, dx + C\sqrt{\varepsilon_0} \int_{B_1}|\Delta (v-u)|^2 \, dx\,,
\end{split}
\eeq
where we have used \eqref{smallcond} and
\begin{equation}
\label{control1}
\left(   \int_{B_1} \vert \nabla (v-u) \vert^4  \; dx \right)^\frac{1}{4} \leq C \left(   \int_{B_1} \vert \Delta (v-u) \vert^2  \; dx \right)^\frac{1}{2}\,.
\end{equation} Choosing $\varepsilon_0$ sufficiently small so that $C\sqrt{\varepsilon_0} \leq \frac{1}{2}$ yields the desired estimate.
\end{proof}

\begin{lemma}\label{bienergycon}
There exists a constant $\varepsilon_0>0$ such that if $u, v\in W^{2,2}(B_1, \mathbf{S}^n)$ with $u|_{\partial B_1}=v|_{\partial B_1},\partial_\nu u |_{\partial B_1}=\partial_\nu v|_{\partial B_1},$ 
\begin{equation}
\int_{B_1} \vert \Delta u\vert^2 \, dx \,=\, \int_{B_1} \vert \tau(u)\vert^2 + \vert \nabla u\vert^4 \, dx \leq\varepsilon_0\,,
\end{equation}
and $u$ is a weakly intrinsic bi-harmonic map, then we have 
$$
\int_{B_1} |\Delta v|^2 \; dx - \int_{B_1} |\Delta u|^2 \; dx - \int_{B_1} |\Delta (v-u)|^2 \; dx \geq - C\varepsilon_0 \int_{B_1} |\Delta (v-u)|^2 \; dx + 4 \int_{B_1} |\nabla u|^2\nabla u \nabla (v-u) \; dx\,.
$$
\end{lemma}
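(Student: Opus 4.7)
The plan is to integrate by parts twice, using that $v-u$ and $\partial_\nu(v-u)$ both vanish on $\partial B_1$, in order to rewrite the left-hand side as $2\int_{B_1}\Delta^2 u\cdot(v-u)\,dx$. Then I would decompose $\Delta^2 u$ orthogonally at $u$ into its tangential and normal parts and treat each separately; the intrinsic bi-harmonic equation is used only for the tangential part, while the normal part is handled purely algebraically via the constraint $|u|^2\equiv 1$.

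For the normal component, differentiating $|u|^2=1$ gives
\begin{equation*}
u\cdot\Delta^2 u \,=\, -\Delta|\nabla u|^2 - 2\nabla u\cdot\nabla\Delta u - |\Delta u|^2,
\end{equation*}
and together with $u\cdot(v-u) = -\tfrac12|v-u|^2$ this produces integrals of the form $\int|v-u|^2\cdot(\text{quadratic in derivatives of }u)\,dx$. Combining the pointwise $\varepsilon$-regularity bounds of Theorem~\ref{ereg} (applied with vanishing perturbation $f\equiv 0$) on the $u$-factors with the second-order Hardy inequality of Theorem~\ref{hardyineq} to absorb $\int|v-u|^2(1-|x|)^{-4}\,dx$ into $\int|\Delta(v-u)|^2\,dx$, the whole normal contribution is bounded in absolute value by $C\varepsilon_0\int|\Delta(v-u)|^2\,dx$.

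For the tangential component, the intrinsic bi-harmonic identity $P(\Delta\tau(u)) = R^{\mathbf{S}^n}(\nabla u,\tau(u))\nabla u$ combined with the splitting $\Delta^2 u = \Delta\tau(u) - \Delta(|\nabla u|^2 u)$ (valid because $\Delta u = \tau(u) - |\nabla u|^2 u$ on the sphere) yields
\begin{equation*}
P(\Delta^2 u) \,=\, \sum_\alpha\langle\tau(u),\partial_\alpha u\rangle\,\partial_\alpha u \,-\, 2|\nabla u|^2\tau(u) \,-\, 2\nabla|\nabla u|^2\!\cdot\nabla u.
\end{equation*}
The main term $4\int|\nabla u|^2\nabla u\cdot\nabla(v-u)\,dx$ arises from the middle summand: substituting $\tau(u) = \Delta u + |\nabla u|^2 u$ into $-2|\nabla u|^2\tau(u)\cdot(v-u)$ and integrating by parts on the resulting $-4\int|\nabla u|^2\Delta u\cdot(v-u)\,dx$ produces $4\int|\nabla u|^2\nabla u\cdot\nabla(v-u)\,dx + 4\int\nabla|\nabla u|^2\cdot[(v-u)\cdot\nabla u]\,dx$ plus a small $\int|\nabla u|^4|v-u|^2\,dx$ remainder, and the $\nabla|\nabla u|^2$ piece is then exactly cancelled by the last summand $-2\nabla|\nabla u|^2\!\cdot\nabla u$ of $P(\Delta^2 u)$. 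The first summand $\sum_\alpha\langle\tau(u),\partial_\alpha u\rangle\partial_\alpha u$ contributes only a scaling-critical remainder of the form $\int|\tau(u)||\nabla u|^2|v-u|\,dx$, which is again controlled by H\"older together with Theorem~\ref{ereg} and Theorem~\ref{hardyineq} to yield a $C\varepsilon_0\int|\Delta(v-u)|^2\,dx$ bound.

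The main obstacle is the careful bookkeeping in this tangential calculation: one must check that the $\nabla|\nabla u|^2$-type remainder produced by the integration by parts is indeed \emph{exactly} cancelled by the $-2\nabla|\nabla u|^2\!\cdot\nabla u$ summand in $P(\Delta^2 u)$, so that the coefficient in front of $\int|\nabla u|^2\nabla u\cdot\nabla(v-u)\,dx$ is precisely $4$; and then verify that each remaining scaling-critical error term (in particular the borderline one involving $\langle\tau(u),\partial_\alpha u\rangle[\partial_\alpha u\cdot(v-u)]$) actually falls under the desired $C\varepsilon_0\int|\Delta(v-u)|^2\,dx$ bound rather than only a weaker $C\sqrt{\varepsilon_0}\bigl(\int|\Delta(v-u)|^2\,dx\bigr)^{1/2}$ envelope.
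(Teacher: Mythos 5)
Your overall strategy is the same as the paper's: write the left-hand side as $2\int_{B_1}\Delta^2 u\cdot(v-u)\,dx$, split $\Delta^2 u$ into its parts normal and tangent to $\mathbf{S}^n$ at $u$, handle the normal contribution via $u\cdot(v-u)=-\tfrac12|v-u|^2$ together with the $\varepsilon$-regularity and Hardy inequalities, and use the intrinsic bi-harmonic equation only on the tangent part. Your cancellation observation (that the $\nabla|\nabla u|^2$-type remainder produced by integration by parts on $-2|\nabla u|^2\tau(u)\cdot(v-u)$ is exactly cancelled by the $-2\nabla|\nabla u|^2\cdot\nabla u$ summand, leaving coefficient $4$) is correct and can be checked index by index.

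However, there is a genuine error in your intermediate identity, and it sits precisely at the point you flagged as the danger. The claim $P(\Delta\tau(u)) = R^{\mathbf{S}^n}(\nabla u,\tau(u))\nabla u$ conflates the ambient Euclidean Laplacian $\Delta$ with the rough Laplacian $\Delta^u$ on $u^{*}T\mathbf{S}^n$. The intrinsic equation gives $\Delta^u\tau(u) = R^{\mathbf{S}^n}(\nabla u,\tau(u))\nabla u$, but for a tangent vector field $V$ along a sphere-valued $u$ one has $P(\Delta V) = \Delta^u V - \sum_\alpha\langle V,\partial_\alpha u\rangle\,\partial_\alpha u$, so in fact
\begin{equation*}
P(\Delta\tau(u)) \,=\, R^{\mathbf{S}^n}(\nabla u,\tau(u))\nabla u \,-\,\sum_\alpha\langle\tau(u),\partial_\alpha u\rangle\,\partial_\alpha u \,=\, -|\nabla u|^2\tau(u)\,,
\end{equation*}
since the two $\sum_\alpha\langle\tau(u),\partial_\alpha u\rangle\,\partial_\alpha u$ terms cancel. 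Consequently the correct formula is
\begin{equation*}
P(\Delta^2 u) \,=\, -2|\nabla u|^2\tau(u) \,-\, 2\nabla|\nabla u|^2\cdot\nabla u\,,
\end{equation*}
with no extra $\sum_\alpha\langle\tau(u),\partial_\alpha u\rangle\,\partial_\alpha u$ summand. One can confirm this by adding up the paper's terms II through VI for the sphere: those integrands reduce to $0$, $-|\nabla u|^4 u$, $-\mathrm{div}(|\nabla u|^2\nabla u)$, $-|\nabla u|^4 u$ and $-\mathrm{div}(|\nabla u|^2\nabla u)$, whose sum is exactly $-2|\nabla u|^2\tau(u)-2\nabla|\nabla u|^2\cdot\nabla u$.

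This matters because the spurious term you introduce, $\int_{B_1}\sum_\alpha\langle\tau(u),\partial_\alpha u\rangle\,\partial_\alpha u\cdot(v-u)\,dx$, genuinely cannot be bounded by $C\varepsilon_0\int_{B_1}|\Delta(v-u)|^2\,dx$. It is only linear in $v-u$, and combining Cauchy--Schwarz with $\varepsilon$-regularity and Hardy yields at best a bound of the form $C\varepsilon_0\bigl(\int_{B_1}|\Delta(v-u)|^2\,dx\bigr)^{1/2}$, which is too weak for the convexity estimate. So your proof as written has a gap. The fix is that the problematic term was never there: with the corrected formula the tangential contribution computes to
\begin{equation*}
2\int_{B_1} P(\Delta^2 u)\cdot(v-u)\,dx \,=\, 4\int_{B_1}|\nabla u|^2\nabla u\cdot\nabla(v-u)\,dx \,+\, 2\int_{B_1}|\nabla u|^4|v-u|^2\,dx \,\geq\, 4\int_{B_1}|\nabla u|^2\nabla u\cdot\nabla(v-u)\,dx\,,
\end{equation*}
which together with your estimate for the normal part completes the proof along exactly your lines.
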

\begin{proof}
For intrinsic bi-harmonic maps we have:
\beq
\begin{split}
P^\perp (\Delta^2 u) \,= \,& \Delta^2 u - P\left(\langle  \nabla P^\bot  \nabla u , \nabla_u(\nabla_u P^\bot)\nabla u  \nabla u \rangle \right) -  \langle \nabla P^\bot  \nabla u,  \nabla_u P^\bot \nabla u \nabla P\rangle \\
&+ \text{div} \,\langle \nabla P^\bot  \nabla u,  \nabla_u P^\bot  \nabla u P \rangle
-  \langle \nabla P^\bot  \nabla u,  \nabla P^\bot  \nabla P \rangle + \text{div} \langle \nabla P^\bot  \nabla u,  \nabla P^\bot  P \rangle\,,
\end{split}
\eeq
and therefore
\begin{align}
& \,\int_{B_1} |\Delta v|^2 \; dx - \int_{B_1} |\Delta u|^2 \; dx - \int_{B_1} |\Delta (v - u)|^2\; dx \notag\\
=&\,2 \int_{B_1} \langle \Delta^2 u, v-u\rangle\; dx \label{temp4} \\
= &\, 2 \int_{B_1} \langle P^\perp (\Delta^2 u) + P\left(\langle  \nabla P^\bot  \nabla u , \nabla_u(\nabla_u P^\bot)\nabla u  \nabla u \rangle \right) +  \langle \nabla P^\bot  \nabla u,  \nabla_u P^\bot \nabla u \nabla P\rangle, v-u\rangle\; dx \notag\\
&\,-2 \int_{B_1}  \left\langle \text{div} \,\langle \nabla P^\bot  \nabla u,  \nabla_u P^\bot  \nabla u P \rangle -\langle \nabla P^\bot  \nabla u,  \nabla P^\bot  \nabla P \rangle + \text{div} \langle \nabla P^\bot  \nabla u,  \nabla P^\bot  P \rangle, v-u \right\rangle\; dx\notag\\
= &\,2 \int_{B_1} \langle P^\perp (\Delta^2 u), v-u\rangle dx + 2 \int_{B_1}  \left\langle \langle  \nabla P^\bot  \nabla u , \nabla_u(\nabla_u P^\bot)\nabla u  \nabla u \rangle , P(v-u)\right\rangle\; dx\notag\\
&\,+2 \int_{B_1}  \left\langle  \langle \nabla P^\bot  \nabla u,  \nabla_u P^\bot \nabla u \nabla P\rangle , v-u\right\rangle\; dx + 2 \int_{B_1}  \left \langle \langle \nabla P^\bot  \nabla u,  \nabla_u P^\bot  \nabla u P \rangle, \nabla(v-u) \right\rangle \;dx \notag\\
&\,+ 2 \int_{B_1} \langle\langle \nabla P^\bot  \nabla u, \nabla P^\bot  \nabla P\rangle , v-u\rangle \;dx + 2 \int_{B_1}   \langle \langle \nabla P^\bot  \nabla u,  \nabla P^\bot  P \rangle, \nabla(v-u)\rangle \;dx \notag\\
= &\, \text{I} + \text{II} + \text{III} + \text{IV} + \text{V} + \text{VI}\,.\notag
\end{align}
Next we will estimate these six terms one by one.

\subsection{Term I} Note that term I can be handled using the $\eps$-regularity Theorem \ref{ereg} and Hardy inequality Theorem \ref{hardyineq}, so that
\beq \label{termI}
\text{I}\geq -C  \int_{B_1} \vert (v-u)^\bot \vert \cdot\vert P^\perp \left(\Delta^2 u\right)\vert \; dx\geq -C  \int_{B_1} \vert v-u \vert^2 (1-|x|)^{-4} \; dx \,\geq\, - C\varepsilon_0 \int_{B_1}|\Delta (v - u)|^2\; dx \,,
\eeq
where we have used the fact that $\vert (v-u)^\bot \vert \leq C|v-u|^2$ for some $C>0$. This estimate for term I is in fact valid for general target manifolds.

\subsection{Term II}  As in \eqref{localex}, the integrant in term II reads as
$$ \sum_{\alpha, \beta,\gamma, i, j, k, m,l} P_{lk} D_\alpha (P^\bot)_{ij} D_{\alpha} u^j  \nabla_{u^k} \nabla_{u^\beta}(P^\bot)_{im}  D_\gamma u^\beta D_\gamma u^m (v-u)^l\,.$$
Now using the fact that the target manifold is $\mathbf{S}^n$, we know that $u$ is the unit normal vector at the point $u\in  \mathbf{S}^n$ and $P^\bot({\mathbf{v}}) = \langle {\mathbf{v}}, u\rangle u$ for any vector ${\mathbf{v}} \in T_u(\mathbf{S}^n)$ so that
$$
P^\bot_{ij} = u^i u^j \quad\text{and}\quad P^\bot (\Delta u) = - \nabla P^\bot \nabla u = - u |\nabla u|^2\,.
$$
Therefore, in this case the integrant in term II becomes
\beq
\label{termII}
\begin{split}
\sum_{\beta,\gamma, i, k, m,l} P_{lk}   u_i|\nabla u|^2 (\delta_{i\beta}\delta_{mk} + \delta_{ik}\delta_{m\beta}) D_\gamma u^\beta D_\gamma u^m (v-u)^l \,=\,0\,.
\end{split}
\eeq

\subsection{Term III}  Note that $$P_{ij} = \delta_{ij} - u_iu_j\,,$$ and therefore the integrant in term III becomes
\beq
\label{termIII}
\begin{split}
 & \sum_{\alpha, \beta, \gamma, i, j, k, m,l} D_\alpha (P^\bot)_{ij} D_{\alpha} u^j  \nabla_{u^\beta}(P^\bot)_{im} D_\gamma u^m D_\gamma P_{\beta l}(v-u)^l \,= \,- |\nabla u|^2 \nabla u^\beta\cdot(\nabla u^\beta u^l + u^\beta\nabla u^l)(v-u)^l\\
=&\,- |\nabla u|^4 u\cdot (v-u) = - |\nabla u|^4 u \cdot (v-u)^\bot\, \geq\, - C|\nabla u|^4 |v-u|^2\,.
\end{split}
\eeq
Therefore, term II can be handled similarly as term I using Theorem \ref{ereg} and Theorem \ref{hardyineq}.

\subsection{Term IV}  Note that, as in \eqref{localex},  the integrant in term IV is (using again $P^\bot_{ij} = u_i u_j$)
$$
\sum_{\alpha, \beta, \gamma, i, j, k,l}  D_\alpha (P^\bot)_{ij} D_{\alpha} u^j   \nabla_{u^\gamma}(P^\bot)_{ik}   D_\beta u^k P_{\gamma l} D_\beta(v-u)^l \,=\, |\nabla u|^2 \nabla u \cdot \nabla (v-u)\,.
$$

\subsection{Term V}  Using again $P^\bot_{ij} = u_i u_j, P + P^\bot = \text{Id}$ and $u\cdot \nabla u =0$ on $\mathbf{S}^n$ we have
\be
\begin{split}
&2 \int_{B_1} \langle\langle \nabla P^\bot  \nabla u, \nabla P^\bot  \nabla P\rangle , v-u\rangle \;dx = 2  \int_{B_1} \nabla P^\bot_{ij} \nabla u^j\nabla P^\bot_{ik}\nabla P_{ks}(v-u)^s \;dx
\\
&= - 2  \int_{B_1} |\nabla u|^4 u \cdot (v-u) \;dx \geq -2\int_{B_1} |\nabla u|^4 |(v-u)^\bot| \;dx \geq -C\int_{B_1} |v-u|^2|\nabla u|^4  \;dx\,.
\end{split}
\ee
Therefore it can be handled similarly as term I using Theorem \ref{ereg} and Theorem \ref{hardyineq}, and we get
\beq \label{termV}
\begin{split}
\text{V} \,\geq\, - C\varepsilon_0 \int_{B_1}|\Delta (v - u)|^2\; dx \,.
\end{split}
\eeq

\subsection{Term VI} 
For term VI, we have (using again $P^\bot_{ij} = u^i u^j$)
\beq \label{termVI}
\begin{split}
\text{VI} \,=\,2\int_{B_1}   \langle \langle \nabla P^\bot  \nabla u,  \nabla P^\bot  P \rangle, \nabla(v-u)\rangle \; dx \,&=\, 2\int_{B_1}   |\nabla u|^2\nabla u \cdot \nabla (v-u) \; dx\,.
\end{split}
\eeq
Combining \eqref{psi}, \eqref{termI}, \eqref{termII}, \eqref{termV} and \eqref{termVI}  we have
\beq
\begin{split}
&\int_{B_1} \vert \Delta v\vert^2 \; dx -  \int_{B_1} \vert \Delta u\vert^2 \; dx - \int_{B_1} \vert \Delta(v-u) \vert^2 \; dx\\
&\geq - C\varepsilon_0 \int_{B_1} \vert \Delta(v-u) \vert^2 \; dx+  4\int_{B_1}   |\nabla u|^2\nabla u \cdot \nabla (v-u) \; dx\,.
\end{split}
\eeq
This completes the proof of the lemma.
\end{proof}

\begin{proof}(of Theorem \ref{THM1})
It suffices to prove that 
$$\psi \geq -\frac{1}{2} \int_{B_1} |\tau(v)-\tau(u)|^2 \, dx $$
where
\beq \label{psi}
\begin{split}
\psi&=  \int_{B_1} \vert \tau(v)\vert^2 \; dx -  \int_{B_1} \vert\tau(u)\vert^2 \; dx - \int_{B_1} \vert \tau(v) - \tau(u)\vert^2 \; dx\\
=&\, \int_{B_1} |\Delta v|^2 \; dx - \int_{B_1} |\Delta u|^2 \; dx - \int_{B_1} |\Delta (v-u)|^2 \; dx - \int_{B_1} |\nabla v|^4 - |\nabla u|^4\; dx\\
& + \int_{B_1} \left| v|\nabla v|^2 + u|\nabla u|^2\right|^2 \; dx + 2 \int_{B_1} |\nabla u|^2 u \Delta v + |\nabla v|^2 v\Delta u \; dx\\
\geq &\,- C\varepsilon_0 \int_{B_1} |\Delta (v-u)|^2 \; dx + 4 \int_{B_1} |\nabla u|^2\nabla u \nabla (v-u) \; dx- \int_{B_1} |\nabla v|^4 - |\nabla u|^4\; dx\\
& + \int_{B_1} \left| v|\nabla v|^2 + u|\nabla u|^2\right|^2 \; dx + 2 \int_{B_1} |\nabla u|^2 u \Delta v + |\nabla v|^2 v\Delta u \; dx\,,
\end{split}
\eeq
where we have used Lemma \ref{bienergycon}. Therefore,  using $u|_{\partial B_1}=v|_{\partial B_1}$ and $\partial_\nu u |_{\partial B_1}=\partial_\nu v|_{\partial B_1}$, we have

\beq 
\begin{split}
\psi\geq  &\,- C\varepsilon_0 \int_{B_1} |\Delta (v-u)|^2 \; dx - 2 \int_{B_1} \nabla |\nabla u|^2 u \nabla v \; dx - 2 \int_{B_1}  |\nabla u|^2 u \Delta v \; dx \\
& - 2 \int_{B_1} \nabla |\nabla u|^2 v \nabla u \; dx - 2 \int_{B_1} |\nabla u|^2 v \Delta u \; dx - 4 \int_{B_1}|\nabla u|^4\; dx - \int_{B_1} |\nabla v|^4 - |\nabla u|^4\; dx\\
& + \int_{B_1} \left| v|\nabla v|^2 + u|\nabla u|^2\right|^2 \; dx + 2 \int_{B_1} |\nabla u|^2 u \Delta v + |\nabla v|^2 v\Delta u \; dx\\
=  &\,- C\varepsilon_0 \int_{B_1} |\Delta (v-u)|^2 \; dx + \int_{B_1} \nabla |\nabla u|^2 \nabla |v-u|^2 \; dx + 2 \int_{B_1}  v\Delta u (|\nabla v|^2 - |\nabla u|^2)  \; dx \\
& + 2\int_{B_1} |\nabla u|^2 uv (|\nabla v|^2 - |\nabla u|^2) \; dx - \int_{B_1} |v-u|^2|\nabla u|^4 \; dx\,,
\end{split}
\eeq
where we have used $|u|^2 = |v|^2 = 1$ so that $1-uv = \frac{1}{2}|v-u|^2$ and $u\nabla v + v\nabla u = -\frac{1}{2}\nabla |v-u|^2$. Thus, we have
\beq 
\begin{split}
\psi& \geq  \,- C\varepsilon_0 \int_{B_1} |\Delta (v-u)|^2 \; dx + 2 \int_{B_1} (\Delta u + u|\nabla u|^2) v(|\nabla v|^2 - |\nabla u|^2) \; dx \\
& = \,- C\varepsilon_0 \int_{B_1} |\Delta (v-u)|^2 \; dx + 2 \int_{B_1}\tau(u) \cdot (v-u) \langle\nabla (v+u), \nabla (v-u)\rangle \; dx \\
&\geq\, - C\varepsilon_0 \int_{B_1} |\Delta (v-u)|^2 \; dx - 2 \left(\int_{B_1}|\tau(u)|^2|v-u|^2\; dx\right)^{\frac{1}{2}} \Vert \nabla (v+u) \Vert_4\Vert \nabla (v-u) \Vert_4 \\
&\geq \,- C\sqrt{\varepsilon_0} \int_{B_1} |\Delta (v-u)|^2 \; dx \,,
\end{split}
\eeq
where we used \eqref{control1}, Theorem \ref{ereg}, Theorem \ref{hardyineq} and the condition $\int_{B_1} \vert \nabla u\vert^4 + \vert \nabla v\vert^4 \, dx \leq 2\varepsilon_0$. Choosing $\varepsilon_0$ sufficiently small so that $C\sqrt{\varepsilon_0} \leq \frac{1}{2}$ and combining with Lemma \ref{conv2} give the desired energy convexity \eqref{Convexity1}.
\end{proof}


\section{Long time existence and uniform convergence} \label{section2}
In this section we prove Theorem \ref{THM2}. First of all we need to control the $L^2$-norm of  $u_t=\frac{\partial u}{\partial t}$, which is the goal of the next proposition.
\begin{prop}
\label{p1} 
Under the same assumption as Theorem \ref{THM2}, there exist $0<T_1<T_2$ such that for all $t_1,t_2 \in [T_1, T_2] \subset [0,T_0], t_1 < t_2$ we have
\begin{equation}\label{decreasingofu_t}
\int_{B_1} \left\vert u_t (.,t_2) \right\vert^2 \; dx \leq  \frac{1}{t_2 -t_1 } \int_{t_1}^{t_2} \int_{B_1} \left\vert u_t \right\vert^2 \; dx \; dt \,.
\end{equation}
Here $T_0>0$ is the short time of existence of the flow.
\end{prop}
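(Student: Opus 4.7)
The estimate \eqref{decreasingofu_t} is an immediate consequence of monotonicity of the function $\phi(t) := \int_{B_1}|u_t(\cdot,t)|^2\,dx$: if $\phi$ is non-increasing on $[T_1, T_2]$, then $\phi(s)\ge \phi(t_2)$ for every $s \in [t_1,t_2]$, and integrating in $s$ and dividing by $t_2-t_1$ gives \eqref{decreasingofu_t} directly. Thus the plan is to find $0 < T_1 < T_2 \le T_0$ such that $\phi'(t)\le 0$ on $[T_1,T_2]$.

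Since the short-time theory supplies a smooth solution on $[0, T_0]$, one may differentiate the flow equation \eqref{bfi} in $t$: setting $\eta := u_t$, one obtains a linear fourth-order parabolic equation
\[
\eta_t + \Delta^2 \eta \;=\; \mathcal{L}(u)[\eta],
\]
where $\mathcal{L}(u)[\eta]$ is the sum, over each nonlinear term on the right-hand side of \eqref{bfi}, of the contributions from the product rule when one factor of $u$ (or one of its derivatives) is replaced by the corresponding factor involving $\eta$. Because the Dirichlet and Neumann boundary data $\chi$ and $\xi$ are time-independent, $\eta|_{\partial B_1}=0$ and $\partial_\nu \eta|_{\partial B_1}=0$, so on the one hand the boundary terms in the integration by parts vanish, and on the other hand the second-order Hardy inequality (Theorem \ref{hardyineq}) applies to $\eta$. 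Testing the equation with $\eta$ and integrating over $B_1$ then yields
\[
\tfrac12 \phi'(t) + \int_{B_1}|\Delta \eta|^2\,dx \;=\; \int_{B_1}\mathcal{L}(u)[\eta]\cdot\eta\,dx.
\]

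It remains to absorb the right-hand side into the dissipation. Pick $T_1>0$ small enough that $T_2 := 2T_1 \le T_0$ and, by continuity and \eqref{initialcondition}, $\int_{B_1}|\Delta u(\cdot,t)|^2\,dx \le 2\varepsilon_0$ uniformly on $[0,T_2]$. Each summand of $\mathcal{L}(u)[\eta]\cdot\eta$ is then structurally identical to one of the quartic-type terms treated in Lemma \ref{bienergycon} and the proof of Theorem \ref{THM1}: a product of $\eta$, derivatives of $u$ of orders up to three, and at most one derivative of $\eta$. By H\"older's inequality, the Sobolev embedding $W^{2,2}(B_1) \hookrightarrow W^{1,4}(B_1)$ valid in dimension four, the pointwise identity $P^\bot_{ij}=u^iu^j$ on $\mathbf{S}^n$ (so that $\nabla^j P^\bot$ is controlled by lower-order derivatives of $u$), the refined $\varepsilon$-regularity Theorem \ref{ereg}, and the Hardy bound for $\eta$, one obtains
\[
\left|\int_{B_1} \mathcal{L}(u)[\eta]\cdot \eta\,dx\right| \;\le\; C\sqrt{\varepsilon_0}\,\int_{B_1}|\Delta \eta|^2\,dx,
\]
which is absorbed into the dissipation once $\varepsilon_0$ is small, leaving $\phi'(t)\le 0$ on $[T_1,T_2]$ as required. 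The main obstacle is the combinatorial bookkeeping of the numerous terms of \eqref{bfi}; on the spherical target this can be streamlined by using the simpler form \eqref{ibe02}--\eqref{ibe03}, and several structural cancellations analogous to those of Terms II and IV in the proof of Theorem \ref{THM1} will reappear here.
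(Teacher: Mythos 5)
Your high-level plan coincides with the paper's: differentiate the flow in $t$, test with $\eta = u_t$, integrate by parts using the time-independence of the boundary data, and absorb the right-hand side into the dissipation $\int|\Delta\eta|^2$ via the refined $\varepsilon$-regularity (Theorem \ref{ereg}) and the second-order Hardy inequality (Theorem \ref{hardyineq}). That part is fine.

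There is, however, a genuine gap in how you set up the absorption. To obtain pointwise bounds of the form $|\nabla^l u|(x) \lesssim \varepsilon_0^{\alpha}(1-|x|)^{-l}$ — which is what combines with Hardy to give the $C\sqrt{\varepsilon_0}\int|\Delta\eta|^2$ bound — you must apply Theorem \ref{ereg} to the equation $\Delta^2 u = -u_t + Q(u)$ with $f = -u_t$ treated as the perturbation. But the constants in \eqref{Fereg}–\eqref{PTWSest} depend on $\|f\|_{L^{4,1}} = \|u_t\|_{L^{4,1}}$; you need that quantity to be small, and controlling the size of $u_t$ is precisely what the proposition is trying to establish. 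Your proposal picks $T_1$ only so that $\int|\Delta u(\cdot,t)|^2 \le 2\varepsilon_0$ on $[0,T_2]$, with no argument that $u_t$ is small there, so the $\varepsilon$-regularity cannot yet be invoked with the needed small constant. The paper breaks this circularity in two steps you have not included: first, using the integrated energy inequality \eqref{eii} together with the uniform short-time bounds, it locates a time $T_1\in[0,T_2)$ at which $\int_{B_1}|u_t|^2(\cdot,T_1)\le\varepsilon_0$; second, it runs a continuity/bootstrap argument on a small interval $[T_1,T_1+\delta]$ — where $\int|u_t|^2 \le 2\varepsilon_0$ and hence $\|u_t\|_{W^{2,2}\hookrightarrow L^p}\lesssim\varepsilon_0^{1/6}$ by the uniform $C^\infty$ bounds — to conclude monotonicity of $\int|u_t|^2$ there, and then iterates from $T_1+\delta$ to $T_1+2\delta$, etc., until reaching $T_2$. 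Without that seeding-plus-iteration mechanism, the claimed bound $\left|\int_{B_1}\mathcal{L}(u)[\eta]\cdot\eta\,dx\right| \le C\sqrt{\varepsilon_0}\int_{B_1}|\Delta\eta|^2\,dx$ is not justified, and the conclusion $\phi'\le 0$ on $[T_1,T_2]$ does not follow.
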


\begin{proof}
Differentiating the flow equation \eqref{ibmhf1} or the first equation in \eqref{bfi} with respect to $t$, multiplying by $u_t$ and integrating over $B_1 \times [t_1, t_2]$, we have
\begin{align}
&\frac{1}{2}\int_{t_1}^{t_2}\int_{B_1} \partial_t |u_t|^2  \; dx \; dt+ \int_{t_1}^{t_2}\int_{B_1} |\Delta u_t|^2  \; dx \; dt\notag\\
\leq &\, C \int_{t_1}^{t_2}\int_{B_1}  |u_t|^2(|\nabla u|^4 + |\nabla u|^2|\nabla^2 u| + |\nabla u||\nabla \Delta u| + |\nabla^2 u|^2)  \; dx \; dt\notag\\
& + C \int_{t_1}^{t_2}\int_{B_1}  |u_t||\Delta u_t| (|\nabla u|^2 + |\nabla^2 u|) + |\nabla u_t||\Delta u_t||\nabla u|  \; dx \; dt \label{temp1}\\
\leq &\, C \int_{t_1}^{t_2}\int_{B_1}|u_t|^2(|\nabla u|^4 + |\nabla u|^2|\nabla^2 u| + |\nabla u||\nabla \Delta u| + |\nabla^2 u|^2 )  \; dx \; dt+  \frac{1}{2}\int_{t_1}^{t_2}\int_{B_1} |\Delta u_t|^2   \; dx \; dt\,,\label{temp119}
\end{align}
where we have used the Young's inequality. We also used the fact that
\begin{align*}
\int_{B_1}  |\nabla u_t|^2|\nabla u|^2  \; dx &= -\int_{B_1}  u_t \Delta u_t |\nabla u|^2  + u_t \nabla u_t \nabla |\nabla u|^2 \; dx\\
& \leq \frac{1}{\alpha}\int_{B_1} |\Delta u_t|^2   \; dx + C(\alpha) \int_{B_1} |u_t|^2 (|\nabla u|^4+|\nabla^2 u|^2) \; dx + \frac{1}{2}\int_{B_1}  |\nabla u_t|^2|\nabla u|^2  \; dx 
\end{align*}
and (with the last term on the right-hand side being absorbed into the left-hand side) then we chose $\alpha>0$ appropriately to accommodate the constant $C>0$ in \eqref{temp1} in order to get the factor $1/2$ in the last term of \eqref{temp119}. Now by the general short time existence result of Lamm \cite{Lamm01} or Mantegazza-Martinazzi \cite{MM12}, there exists $T_0>0$ depending only on $\|u_0\|_{C^\infty(\overline{B_1})}$ such that the flow exists smoothly up to $T_0$. Thanks to \eqref{eii} and the small initial bi-energy \eqref{initialcondition} we may assume without loss of generality that at $T_1 \in [0, T_0]$ we have
$$
\int_{B_1} |u_t|^2 (\cdot, T_1) \,dx \leq \varepsilon_0 \quad\text{and}\quad \int_{B_1} |\Delta u|^2 (\cdot, T_1) \,dx \leq 2\varepsilon_0\,.
$$ 
To see this, since $u(x, t)$ and all its derivatives are uniformly bounded by a constant depending only on $\|u_0\|_{C^\infty(\overline{B_1})}$ for all $t\in [0,T_0]$,  there exists $T_2>0$ depending only on $\|u_0\|_{C^\infty(\overline{B_1})}$ such that we have 
$$\int_{B_1} |\Delta u|^2 (\cdot, t) \,dx \leq 2\varepsilon_0 \quad\text{for all } t\in [0,T_2] \subset [0,T_0]\,.$$
Therefore if $\int_{B_1} |\tau(u_0)|^2 \,dx \leq \int_{B_1} |\Delta u_0|^2 \,dx < 2T_2 \varepsilon_0$ then by \eqref{eii} there exists $T_1\in [0,T_2)$ such that $\int_{B_1} |u_t|^2 (\cdot, T_1) \,dx \leq \varepsilon_0$. Note that now the choice of $\varepsilon_0$ depends on $\|u_0\|_{C^\infty(\overline{B_1})}$. Now by continuity of $\int_{B_1}|u_t(\cdot,t)|^2 dx$ in $t$, there exists $\delta>0$ depending only on $\|u_0\|_{C^\infty(\overline{B_1})}$ such that for any $t_0\in [T_1, T_1+\delta] \subset [T_1, T_2]$ we have
\begin{equation}\label{temp3}
\int_{B_1} |u_t|^2 (\cdot, t_0) \,dx \leq 2\varepsilon_0 \quad \text{and}\quad \Vert u_t (\cdot, t_0)\Vert_{W^{2,2}(B_1) \hookrightarrow L^p(B_1)} \leq C \varepsilon_0^{1/6}\,.
\end{equation}
Here $C>0$ depends only on $\|u_0\|_{C^\infty(\overline{B_1})}$ and $p<\infty$. For any $t_0\in [t_1, t_2]$, we can apply the $\epsilon$-regularity Theorem \ref{ereg} for approximate bi-harmonic maps and use the same arguments as in the proof of the Hardy inequality Theorem \ref{hardyineq} to get estimate for
$$\int_{B_1}|u_t|^2(|\nabla u|^4 + |\nabla u|^2|\nabla^2 u| + |\nabla u||\nabla \Delta u| + |\nabla^2 u|^2 ) (\cdot, t_0)\,dx\,.$$
Indeed, similar to the proof of \eqref{hardyest} (replacing $v-u$ by $u_t(t_0)$), for any $t_0\in [t_1, t_2]$ we have
\begin{equation}\label{temp2}
\int_{B_1}|u_t|^2(|\nabla u|^4 + |\nabla u|^2|\nabla^2 u| + |\nabla u||\nabla \Delta u| + |\nabla^2 u|^2 ) (\cdot, t_0)\,dx \leq C\varepsilon_0^{1/6}\int_{B_1}|\Delta u_t(\cdot, t_0)|^2\,dx\,.
\end{equation}
Here we need to rewrite the flow equation as $\Delta^2 u = -u_t +Q(u)$ and treat $f = -u_t$ as the perturbation term in the $\epsilon$-regularity Theorem \ref{ereg},  where
\begin{align}\label{Quu}
Q(u) = &\,P^\perp (\Delta^2 u) + P\left(\langle  \nabla P^\bot  \nabla u , \nabla_u(\nabla_u P^\bot)\nabla u  \nabla u \rangle \right) + \langle \nabla P^\bot  \nabla u,  \nabla_u P^\bot \nabla u \nabla P\rangle \notag\\
&- \text{div} \,\langle \nabla P^\bot  \nabla u,  \nabla_u P^\bot  \nabla u P \rangle
+  \langle \nabla P^\bot  \nabla u,  \nabla P^\bot  \nabla P \rangle - \text{div} \langle \nabla P^\bot  \nabla u,  \nabla P^\bot  P \rangle\,,
\end{align}
see also \eqref{ibe02} and \eqref{ibe03}. Using \eqref{temp3} (say $\Vert u_t \Vert_{L^5 \hookrightarrow L^{4,1}} \leq C\varepsilon_0^{1/6}$) and thanks to \eqref{Fereg} of Theorem \ref{ereg} we get
$$\vert Q(u)\vert \leq C\varepsilon_0^{1/6} (1-\vert x\vert)^{-4} $$ 
and then we can apply Theorem \ref{hardyineq} to get the desired estimate \eqref{temp2}. Inserting \eqref{temp2} back into \eqref{temp1} (for any $t_0\in [t_1, t_2]$) we see that the right-hand side of \eqref{temp2} can be absorbed into the the left-hand side if we choose $\varepsilon_0$ sufficiently small. 
This implies that we have 
\begin{equation}
\int_{B_1} |u_t(\cdot, t_2)|^2 \,\leq\,  \int_{B_1} |u_t(\cdot, t_1)|^2
\end{equation}
for $T_1 \leq t_1< t_2\leq T_1+\delta$. This shows, instead of \eqref{temp3}, for any $t_0 \in [T_1, T_1+\delta]$ we have
\begin{equation}
\int_{B_1} |u_t|^2 (\cdot, t_0) \,dx \leq \int_{B_1} |u_t|^2 (\cdot, T_1) \,dx\leq \varepsilon_0\,.
\end{equation}
We can then continue and iterate this process from $T_1+\delta$ to $T_1 + 2\delta$ and so on, noting that $\Vert u(\cdot, T_1+k \delta) \Vert_{C^\infty(\overline{B_1})}, k=1,2,3,...$ are uniformly bounded for all $k$ (as long as $T_1 + k\delta \leq T_2$) by $C\varepsilon_0^{1/6}$ thanks to Theorem \ref{ereg} and bootstrapping (so that $\delta>0$ has a definite size and we can iterate this process until the time hits $T_2$). We see that $\int_{B_1} |u_t(\cdot, t)|^2\,dx$ is indeed non-increasing along the flow after $T_1$, which yields \eqref{decreasingofu_t} for all $t_1, t_2 \in [T_1,T_2]$ with $t_1 < t_2$. In the above calculations, we should treat $u_t$ as a difference quotient: $u_t(\cdot,t) = \lim_{h\to 0^+}(u(\cdot, t+h)-u(\cdot,t))/h$ which has zero Dirichlet and Neumann boundary conditions on $\partial B_1$ for all $t\in [0,T_1]$;  moreover, we have denoted $\Delta u_t(\cdot,t) = \lim_{h\to 0^+}(\Delta (u(\cdot, t+h)-u(\cdot,t)))/h$ and all the calculations are valid for any fixed $h>0$ and then we take $h\to 0^{+}$ to conclude \eqref{temp2}. This completes the proof of the proposition.
\end{proof}

\begin{proof}(of Theorem \ref{THM2})
We start with the case that $t_1, t_2 \in [T_1,T_2]$ where $T_1$ and $T_2$ are from Proposition \ref{p1}. Similar to the proof of Lemma \ref{conv2}, using the $\epsilon$-regularity Theorem \ref{ereg} we have
\begin{equation}
\int_{B_1} |\Delta u_1-\Delta u_2 |^2 \, dx\,\leq\, 4\int_{B_1} |\tau(u_1)-\tau(u_2)|^2 \, dx\,,
\end{equation}
if we choose $\varepsilon_0$ sufficiently small. Here we denote $u_i=u(\cdot, t_i)$.  To get the energy convexity \eqref{MainConv}, it suffices to prove
$$\psi \geq -\left( \int_{B_1} \vert \tau(u_1)\vert^2 \; dx -  \int_{B_1} \vert\tau(u_2)\vert^2 \; dx \right) -\frac{1}{2} \int_{B_1} |\tau(u_1)-\tau(u_2)|^2 \, dx $$
where
$$
\psi=  \int_{B_1} \vert \tau(u_1)\vert^2 \; dx -  \int_{B_1} \vert\tau(u_2)\vert^2 \; dx - \int_{B_1} \vert \tau(u_1) - \tau(u_2)\vert^2 \; dx\,.
$$

The rest of the proof can be taken verbatim from the the proof of Theorem \ref{THM1} with one modification in the proof of Lemma \ref{bienergycon}. Namely, in the proof of Lemma \ref{bienergycon}, we will need to replace $\Delta^2 u$ by $-u_t +Q(u)$ instead of just $Q(u)$ in equation \eqref{temp4}, where $Q(u)$ is the same as \eqref{Quu}. Therefore, the only extra term in the estimate \eqref{temp4} we need to take into account is
\beq
\begin{split}
2 \int_{B_1}  \langle -(u_t)_2, u_1 -u_2\rangle \; dx &\geq - 2 \left(  \int_{B_1}  \vert (u_t)_2  \vert^2 \; dx \right)^\frac{1}{2}  \left(  \int_{B_1}  \vert u_1-u_2  \vert^2 \; dx \right)^\frac{1}{2} \\
& \geq - 2 \left(  \int_{B_1}  \vert (u_t)_2  \vert^2 \; dx \right)^\frac{1}{2}  \sqrt{t_2-t_1}\left(\int_{t_1}^{t_2}  \int_{B_1}  \vert u_t  \vert^2 \; dx \; dt \right)^\frac{1}{2} \\
&  \geq - 2 \int_{t_1}^{t_2}  \int_{B_1}  \vert u_t  \vert^2 \; dx \; dt \\
& \geq  -\left( \int_{B_1} \vert \tau(u_1)\vert^2 \; dx -  \int_{B_1} \vert\tau(u_2)\vert^2 \; dx \right)\,,
\end{split}
\eeq
where $(u_t)_i=u_t(\cdot, t_i)$ and the last inequality is a consequence of  \eqref{eii}. This shows the energy convexity \eqref{MainConv} when $t_1, t_2 \in [T_1, T_2]$. Now thanks to the energy convexity \eqref{MainConv}, for any $t\in [T_1, T_2]$, the bi-energy $\int_{B_1}|\Delta u (\cdot, t)|$ is \textit{uniformly} bounded, which is small if we choose $\varepsilon_0$ sufficiently small. Therefore the $\eps$-regularity Theorem \ref{ereg} allows us to continue and iterate this process from $T_2$ to $2T_2-T_1$ and so on, and the flow exists smoothly for all time and we have the energy convexity \eqref{MainConv} along the flow. This completes the proof of Theorem \ref{THM2}. 
\end{proof}

\appendix
\section{$\eps$-regularity for approximate bi-harmonic maps}
\label{a1}
First, we recall the main result of Lamm-Rivi\`{e}re \cite{LR} that provides a divergence form to elliptic fourth order system of certain type (see e.g. \eqref{nf}) under small energy assumption. This will be one of the main tools in order to obtain the estimates needed for the energy convexity for intrinsic and extrinsic bi-harmonic maps into $\mathbf{S}^n$, see Theorem \ref{ereg}. The first three results in this appendix work for any general closed target manifold $\mathcal{N}$, viewed as a submanifold of $\R^{n+1}$ (of any co-dimension).
\begin{prop}
\label{pn}
The  equation \eqref{ebe} and  \eqref{ibe} can be rewritten in the form
\beq
\label{nf}
\Delta^2 u = \Delta (V \nabla u) + div( w \nabla u) + \nabla \omega \nabla u + F\nabla u ,
\eeq
where $V \in W^{1,2}(B_1, \mathcal{M}_{n+1} \otimes \Lambda^{1} \R^4)$, $w\in L^2(B_1, \mathcal{M}_{n+1})$, $\omega \in L^2(B_1, \mathrm{s}o_{n+1})$ and $F \in L^2\cdot W^{1,2}(B_1,  \mathcal{M}_{n+1} \otimes \Lambda^{1} \R^4)$ with
\beq
\label{cc}
\begin{split}
&\vert V\vert  \leq C\vert \nabla u\vert\,,\\
& \vert F \vert  \leq C \vert\nabla u \vert\left(\vert \nabla^2 u\vert + \vert \nabla u\vert^2 \right)\,,\\
&\vert w \vert + \vert \omega \vert  \leq C\left( \vert \nabla^2 u\vert + \vert \nabla u\vert^2 \right ) \,,
\end{split}
\eeq
almost everywhere, where $C>0$ is a constant which depends only on $\mathcal{N}$ .
\end{prop}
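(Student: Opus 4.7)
The plan is a term-by-term algebraic reorganization of the right-hand sides of \eqref{ebe} and \eqref{ibe}, assigning each piece to one of the four templates in \eqref{nf} using only the pointwise algebra of the orthogonal projections $P$ and $P^\perp$, and then reading off the size bounds \eqref{cc} from the chain rule $\nabla P = DP(u)\nabla u$. The decisive input is that $P$ and $P^\perp$ are symmetric, so the matrix-valued $1$-form
\[
\Omega \,:=\, \nabla P\cdot P^\perp \,-\, P^\perp\cdot\nabla P
\]
satisfies $\Omega^{T}=-\Omega$; that is, $\Omega$ is valued in $\mathfrak{so}_{n+1}$, with $|\Omega|\le C|\nabla u|$ and $|\nabla\Omega|\le C(|\nabla^2u|+|\nabla u|^2)$. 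This is the unique source of antisymmetry in either \eqref{ebe} or \eqref{ibe}, and therefore the only term that can produce an $\nabla\omega\cdot\nabla u$ contribution.

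I would first treat the five extrinsic terms common to \eqref{ebe} and \eqref{ibe}. The leading $-\Delta(\nabla P^\perp\nabla u)$ is already in the template $\Delta(V\nabla u)$ with $V:=-\nabla P^\perp$, giving $|V|\le C|\nabla u|$. The three terms $-\mathrm{div}(\nabla P^\perp\Delta u)$, $2\nabla P^\perp\nabla(\nabla P^\perp\nabla u)$ and $2\nabla P^\perp\nabla P^\perp\Delta u$ are expanded by Leibniz; writing $\Delta u=\mathrm{div}(\nabla u)$ to expose a factor of $\nabla u$ and integrating the outermost $\nabla\Delta u$ occurrences by parts, each splits into a $\mathrm{div}(w\nabla u)$ piece with $|w|\le C(|\nabla^2u|+|\nabla u|^2)$ and a pointwise remainder $F\nabla u$ satisfying the bound on $F$, with no antisymmetric contribution arising because $\nabla P^\perp$ is symmetric. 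The decisive term is $-\Omega\cdot\nabla\Delta u$: integrating by parts twice, once in $\alpha$ on the outer derivative and once in the $\beta$ coming from $\Delta u=\partial_\beta\partial_\beta u$, yields
\[
\Omega_\alpha\,\partial_\alpha\Delta u \,=\, \partial_\alpha\bigl(\Omega_\alpha\Delta u\bigr)\,-\,\mathrm{div}\bigl(\mathrm{div}(\Omega)\,\nabla u\bigr)\,+\,\nabla\,\mathrm{div}(\Omega)\cdot\nabla u,
\]
so the third term is precisely $\nabla\omega\cdot\nabla u$ with $\omega:=\mathrm{div}(\Omega)\in\mathfrak{so}_{n+1}$ (antisymmetry inherited componentwise from $\Omega$) and $|\omega|\le C(|\nabla^2u|+|\nabla u|^2)$; the second is $\mathrm{div}(w\nabla u)$ with $w=\mathrm{div}(\Omega)$; and the first is further redistributed among $\Delta(V\nabla u)$, $\mathrm{div}(w\nabla u)$ and $F\nabla u$ by one more commutation of partial derivatives followed by a symmetrization in the two resulting index pairs.

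For the intrinsic equation \eqref{ibe}, the five additional terms are handled by direct pointwise inspection, using $|\nabla P^\perp|\le C|\nabla u|$, $|D_yP^\perp|\le C$ and $|D_y^2 P^\perp|\le C$: the two terms already in divergence form contribute to $\mathrm{div}(w\nabla u)$ after pulling out one $\nabla u$ from the cubic-in-$\nabla u$ integrand, giving $|w|\le C|\nabla u|^2$; the three remaining terms are pointwise quartic in $\nabla u$ times bounded functions of $u$, and factoring one $\nabla u$ places them in $F\nabla u$ with $|F|\le C|\nabla u|^3\le C|\nabla u|(|\nabla^2u|+|\nabla u|^2)$. The main obstacle ---and the only nontrivial step--- is the double integration by parts in the $-\Omega\cdot\nabla\Delta u$ term, where one must verify that it is exactly $\mathrm{div}(\Omega)$, inheriting antisymmetry, that gets isolated into the $\nabla\omega\cdot\nabla u$ slot, and that none of the symmetric derivatives of $P$ appearing in the other terms leak into that slot. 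Once the identities $\nabla P+\nabla P^\perp=0$ and $\nabla P\cdot P^\perp+P\cdot\nabla P^\perp=0$ (both obtained by differentiating $P+P^\perp=\mathrm{Id}$ and $PP^\perp=0$) are used systematically, this last verification is a routine algebraic check.
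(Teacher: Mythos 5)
The overall strategy — a term-by-term reorganization of \eqref{ebe} and \eqref{ibe}, with the antisymmetric $1$-form $\Omega := \nabla P\,P^\perp - P^\perp\nabla P$ singled out as the unique source of the $\mathrm{s}o_{n+1}$-valued entry $\omega$, and a double integration by parts on $-\Omega\cdot\nabla\Delta u$ yielding $\omega=\pm\operatorname{div}\Omega$ — is the right one, and it is essentially how this is verified in the literature (the paper states the proposition without proof, deferring to \cite{LR}, \cite{LaR}, \cite{Wa2}). However, the mechanism you describe for placing the leftover pieces in the template is not the one that works, and if taken literally it fails. For $-\operatorname{div}(\nabla P^\perp\Delta u)$ you propose a Leibniz expansion followed by ``writing $\Delta u = \operatorname{div}(\nabla u)$ to expose a factor of $\nabla u$''; and for the first piece $\partial_\alpha(\Omega_\alpha\Delta u)$ of the $\Omega$-term you propose ``one more commutation of partial derivatives followed by a symmetrization''. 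Neither route produces the claimed form: a Leibniz expansion of $\operatorname{div}(\nabla P^\perp\Delta u)$ gives $\Delta P^\perp\,\Delta u + \nabla P^\perp\cdot\nabla\Delta u$, and $\Delta P^\perp\,\Delta u$ contains, after the chain rule, a piece $D_yP^\perp\,(\Delta u)^2$ of size $|\nabla^2 u|^2$ carrying no $\nabla u$ factor; this cannot be put into $F\nabla u$ under the constraint $|F|\le C|\nabla u|(|\nabla^2 u|+|\nabla u|^2)$, nor into $\operatorname{div}(w\nabla u)$ or $\Delta(V\nabla u)$. The same obstruction arises for $\partial_\alpha(\Omega_\alpha\Delta u)$ if one commutes the $\alpha$- and $\beta$-derivatives and tries to redistribute.

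The step you are missing is much simpler and makes both of these terms fall into the template with \emph{no} commutation at all. Since $P^\perp$ depends on $x$ only through $u$, the chain rule $\nabla_\alpha P^\perp = D_{y^k}P^\perp\,\nabla_\alpha u^k$ supplies a factor $\nabla_\alpha u$ whose \emph{domain index $\alpha$ coincides with the derivative index}. When this $\alpha$ is the index contracted by the outer divergence, the term is already $\operatorname{div}(w\nabla u)$: explicitly, $\operatorname{div}(\nabla P^\perp\Delta u)^i = \partial_\alpha\bigl(D_{y^k}(P^\perp)_{ij}\,\Delta u^j\,\partial_\alpha u^k\bigr) = \bigl(\operatorname{div}(w\nabla u)\bigr)^i$ with $w_{ik}:=D_{y^k}(P^\perp)_{ij}\Delta u^j$, $|w|\le C|\nabla^2 u|$; and in your double IBP the leftover $\partial_\alpha(\Omega_\alpha\Delta u)$ is, by the identical argument (writing $\Omega_\alpha = \tilde\Omega_k\,\partial_\alpha u^k$ with $\tilde\Omega_k := D_{y^k}P\,P^\perp - P^\perp D_{y^k}P$), directly $\operatorname{div}(w'\nabla u)$ with $w':=\tilde\Omega\,\Delta u$. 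Once this index-matching via the chain rule is stated and used systematically — rather than IBP or symmetrization in $\alpha,\beta$ — the term-by-term bookkeeping you outline, including the treatment of the five intrinsic terms, does close, and all the size bounds in \eqref{cc} follow.
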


\begin{thm}[{\cite[Theorem 1.4]{LR}}]
\label{TLR0} There exists $\eps>0$ and $C>0$ depending only on $\mathcal{N}$, such that the following holds:  let  $V \in W^{1,2}(B_1, \mathcal{M}_{n+1} \otimes \Lambda^{1} \R^4)$, $w\in L^2(B_1, \mathcal{M}_{n+1})$, $\omega \in L^2(B_1,\mathrm{s}o_{n+1})$ and  $F \in L^2\cdot W^{1,2}(B_1,  \mathcal{M}_{n+1} \otimes \Lambda^{1} \R^4)$ be such that
$$\Vert V \Vert_{W^{1,2}} +\Vert w \Vert_2 + \Vert \omega \Vert_2 + \Vert F \Vert_{L^2 \cdot W^{1,2}} < \eps,$$
then there exist $A\in L^\infty \cap W^{2,2}(B_1, \mathcal{G}l_{n+1})$ and $B\in  W^{1,\frac{4}{3}}(B_1, \mathcal{M}_{n+1}\otimes \Lambda^2 \R^4)$ such that 
$$ \nabla \Delta A + \Delta A V -\nabla A w + A (\nabla \omega +F)= \mathrm{curl} B,$$
and
$$\Vert A\Vert_{W^{2,2}} + \text{dist}\,(A, \mathcal{SO}_{n+1}) + \Vert B\Vert_{W^{1,\frac{4}{3}}} \leq C \left(\Vert V \Vert_{W^{1,2}} + \Vert w\Vert_2 +\Vert \omega \Vert_2 +    \Vert F \Vert_{L^2 \cdot W^{1,2}}\right).$$
\end{thm}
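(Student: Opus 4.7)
The overall strategy is to extend Rivi\`ere's two-dimensional conservation-law approach for Schr\"odinger systems with antisymmetric potentials to this fourth-order critical setting in four dimensions. The antisymmetry $\omega\in \mathrm{so}_{n+1}$ is the decisive structural ingredient: it is precisely this feature that allows a gauge change to render the equation's left-hand side divergence-free (and hence equal to $\text{curl}\,B$), while the remaining data $V$, $w$, $F$ will be treated as small perturbations.

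The first and most delicate step is a higher-order Uhlenbeck-type gauge for $\omega$. The goal is to construct $P\in W^{2,2}(B_1, \mathcal{SO}_{n+1})$ with $\text{dist}(P, \mathcal{SO}_{n+1})\lesssim \|\omega\|_2$, obeying a Coulomb-type condition adapted to the fourth-order operator (schematically $d^*(P^{-1}\nabla P + P^{-1}\omega P)=0$, modulo corrections involving $V$ and $w$). I would obtain $P$ by a continuity argument starting from $\omega\equiv 0$, where $P=\mathrm{Id}$ trivially works: the set of $\omega\in L^2$ admitting a controlled gauge is open (inverse function theorem at each point) and closed (by the $W^{2,2}$ compactness estimate), so every $\omega$ with $\|\omega\|_2<\eps$ is reached.

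With $P$ in hand, the second step is a Banach contraction. Write the final $A$ as $A=P(\mathrm{Id}+\alpha)$ with $\alpha\in W^{2,2}$ small, and impose that the left-hand side of the target equation be divergence-free, which is the integrability condition allowing it to be written as $\text{curl}\,B$. This condition becomes a fourth-order elliptic equation for $\alpha$ of the schematic form $\Delta^2 \alpha = \mathcal{L}(\alpha)+g$, where the Coulomb gauge on $P$ causes the leading-order $A\nabla\omega$ contribution to cancel in the divergence and $\mathcal{L}\colon W^{2,2}\to W^{-2,2}$ has operator norm controlled by $\|V\|_{W^{1,2}}+\|w\|_2+\|\omega\|_2+\|F\|_{L^2\cdot W^{1,2}}<\eps$. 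Inverting $\Delta^2$ on $B_1$ with zero boundary data for $\alpha$ and applying the contraction mapping theorem yields a unique $\alpha$, hence $A$, with the announced $W^{2,2}$ bound. The form $B$ is then produced by Poincar\'e's lemma / Hodge decomposition applied to the now-divergence-free $1$-form on the left, with $W^{1,4/3}$ bounds following from Calder\'on--Zygmund theory together with four-dimensional Sobolev products (for example $W^{2,2}\cdot L^2 \hookrightarrow L^{4/3}$).

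The principal obstacle is the first step: the classical Uhlenbeck gauge requires $W^{1,p}$ control on the connection with $p>\dim/2=2$, which is precisely the endpoint here since $\omega$ only lies in the critical space $L^2$. Moreover, the gauge must be compatible with a fourth-order operator rather than with a first-order connection as in Uhlenbeck's original setting, which forces $W^{2,2}$ regularity on $P$ and simultaneous control of the first-order data $V$, $w$. Closing the estimates at this critical exponent depends crucially on the antisymmetry $\omega\in\mathrm{so}_{n+1}$ (so that the relevant cross terms vanish after integration by parts); once that is achieved, the remaining analysis reduces to standard linear elliptic theory and contraction mapping arguments.
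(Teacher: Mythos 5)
The paper itself does not prove this theorem: it is imported verbatim (up to notation) from Lamm--Rivi\`ere \cite{LR}, as the citation tag in the theorem header indicates, so there is no in-paper proof against which your sketch can be compared.

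Your outline does correctly identify the broad architecture of the Lamm--Rivi\`ere argument: a higher-order gauge transformation built by a continuity method that exploits the antisymmetry of $\omega$, followed by a perturbation/fixed-point step, with $B$ produced by Hodge decomposition. A few points would need repair before this could stand as a proof. Your proposed Coulomb condition $d^*(P^{-1}\nabla P + P^{-1}\omega P)=0$ is dimensionally inconsistent---$P^{-1}\nabla P$ is a $1$-form while $P^{-1}\omega P$ is a $0$-form---and does not reflect the fourth-order nature of the operator. In Lamm--Rivi\`ere the gauge step is essentially the statement of the theorem itself with $V=w=F=0$: one constructs $P\in W^{2,2}\cap L^\infty(B_1,\mathcal{SO}_{n+1})$ and $\eta\in W^{1,\frac{4}{3}}$ satisfying $\nabla\Delta P + P\nabla\omega = \mathrm{curl}\,\eta$, and the remaining data $V,w,F$ are then incorporated by perturbing around $A=P$. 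The genuinely hard analytic content---closing the a priori $W^{2,2}\cap L^\infty$ estimate for $P$ at the critical exponent via Wente/Adams-type compensated-compactness estimates unlocked by the antisymmetry, and verifying the closedness step of the continuity method---is named in your sketch but not actually carried out, so your proposal is best read as a plausible road map rather than a proof.
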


Thanks to the previous theorem, we are in position to rewrite equations in approximate form of \eqref{nf} in divergence form.
\begin{thm}[{\cite[Theorems 1.3 and 1.5]{LR}}]
\label{TLR} 
There exists $\eps>0$ and $C>0$ depending only on $\mathcal{N}$, such that if  $u \in W^{2,2}(B_1,\R^{n+1})$  satisfies
$$\Delta^2 u = \Delta (V \nabla u) + div( w \nabla u) + \nabla \omega \nabla u + F\nabla u + f ,$$ 
where $V \in W^{1,2}(B_1, \mathcal{M}_{n+1} \otimes \Lambda^{1} \R^4)$, $w\in L^2(B_1, \mathcal{M}_{n+1})$, $\omega \in L^2(B_1,\mathrm{s}o_{n+1})$, $F \in L^2\cdot W^{1,2}(B_1,  \mathcal{M}_{n+1} \otimes \Lambda^{1} \R^4)$ and  $f \in L^1(B_1,\R^{n+1})$  with
$$\Vert V \Vert_{W^{1,2}} +\Vert w \Vert_2 + \Vert \omega \Vert_2 + \Vert F \Vert_{L^2 \cdot W^{1,2}} < \eps,$$
then there exists $A\in L^\infty \cap W^{2,2}(B_1, \mathcal{G}l_{n+1})$ and $B\in  W^{1,\frac{4}{3}}(B_1, \mathcal{M}_{n+1}\otimes \Lambda^2 \R^4)$ such that 
\beq
\label{E1}
\Vert A\Vert_{W^{2,2}} + d(A, \mathcal{SO}_{n+1}) + \Vert B\Vert_{W^{1,\frac{4}{3}}} \leq C \left(\Vert V \Vert_{W^{1,2}} + \Vert w\Vert_2 +\Vert \omega \Vert_2 +    \Vert F \Vert_{L^2 \cdot W^{1,2}}\right)
\eeq
and
$$\Delta(A \Delta u)= div\left(2\nabla A \Delta u-\Delta A \nabla u + A w \nabla u + \nabla A(V\nabla u)-A \nabla (V \nabla u) -B\nabla u\right) + Af.$$
\end{thm}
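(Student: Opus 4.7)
The plan is to derive the divergence-form equation as a direct consequence of the gauge existence result, Theorem \ref{TLR0}, applied to the same data $(V,w,\omega,F)$. So the first step is to invoke Theorem \ref{TLR0} to obtain $A \in L^\infty \cap W^{2,2}(B_1,\mathcal{G}l_{n+1})$ and $B \in W^{1,4/3}(B_1,\mathcal{M}_{n+1}\otimes\Lambda^2\R^4)$ satisfying the gauge identity
\begin{equation*}
\nabla\Delta A + \Delta A\,V - \nabla A\,w + A(\nabla\omega + F) = \mathrm{curl}\,B,
\end{equation*}
together with the norm bound \eqref{E1}. This already gives the estimate claimed in \eqref{E1}, since it is just the estimate from Theorem \ref{TLR0}. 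What remains is purely algebraic: to check that $\Delta(A\Delta u)$ admits the advertised divergence form.

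Next I would compute $\Delta(A\Delta u)$ from the Leibniz rule $\Delta(fg)=f\Delta g+2\nabla f\cdot\nabla g+g\Delta f$, obtaining
\begin{equation*}
\Delta(A\Delta u) = A\Delta^2 u + 2\nabla A\cdot\nabla\Delta u + \Delta A\,\Delta u,
\end{equation*}
and then substitute the equation satisfied by $u$ to replace $A\Delta^2 u$ by $Af + A\Delta(V\nabla u) + A\,\mathrm{div}(w\nabla u) + A\nabla\omega\,\nabla u + AF\,\nabla u$. Each non-divergence expression is then moved into a divergence using elementary identities:
\begin{align*}
A\Delta(V\nabla u) &= \Delta A\,(V\nabla u) + \mathrm{div}\bigl(A\nabla(V\nabla u)-\nabla A(V\nabla u)\bigr),\\
A\,\mathrm{div}(w\nabla u) &= \mathrm{div}(Aw\nabla u) - \nabla A\cdot(w\nabla u),\\
2\nabla A\cdot\nabla\Delta u + \Delta A\,\Delta u &= 2\,\mathrm{div}(\nabla A\,\Delta u) - \mathrm{div}(\Delta A\,\nabla u) + \nabla\Delta A\cdot\nabla u.
\end{align*}
Assembling these, the purely algebraic non-divergence remainder equals
\begin{equation*}
\nabla\Delta A\cdot\nabla u + \Delta A\,V\nabla u - \nabla A\,w\nabla u + A\nabla\omega\,\nabla u + AF\,\nabla u,
\end{equation*}
which is exactly the left-hand side of the gauge identity contracted with $\nabla u$, and so equals $(\mathrm{curl}\,B)\nabla u$.

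The final step is to notice that $(\mathrm{curl}\,B)\nabla u$ is itself a divergence: writing $B=(B_{\alpha\beta})$ with $B_{\alpha\beta}=-B_{\beta\alpha}$, one has $(\mathrm{curl}\,B)^\alpha = \partial_\beta B_{\alpha\beta}$, so
\begin{equation*}
(\mathrm{curl}\,B)\nabla u = \partial_\beta B_{\alpha\beta}\,\partial_\alpha u = \partial_\beta(B_{\alpha\beta}\partial_\alpha u) - B_{\alpha\beta}\partial_\beta\partial_\alpha u = -\mathrm{div}(B\nabla u),
\end{equation*}
the last cross-derivative term vanishing by antisymmetry of $B$. Absorbing this into the divergence yields exactly the formula claimed. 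The main obstacle is purely bookkeeping — lining up signs and matrix conventions so that the leftover non-divergence terms match the gauge identity term by term; there is no further analytic content beyond Theorem \ref{TLR0}, and all regularity is guaranteed by the product and composition rules applied to the classes in which $A$, $B$, $V$, $w$, $\omega$, $F$ and $u$ live, together with $f\in L^1$.
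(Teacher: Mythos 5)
The paper does not reprove this theorem; it cites it directly from Lamm--Rivi\`ere~\cite{LR}, so there is no ``paper's own proof'' against which to check you. That said, the route you propose is precisely the one in~\cite{LR}: construct the gauge $(A,B)$ from Theorem~\ref{TLR0}, expand $\Delta(A\Delta u)$ by the Leibniz rule, substitute the fourth-order system, move each non-divergence piece into divergence form, and recognize the algebraic remainder as the gauge identity contracted against $\nabla u$, which is then itself a divergence since $B$ is antisymmetric. All three elementary identities you invoke are correct, and the estimate~\eqref{E1} is indeed just the estimate from Theorem~\ref{TLR0}, so the structure of the argument is sound and complete.

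One bookkeeping issue deserves flagging. Your identity
$A\Delta(V\nabla u)=\Delta A\,(V\nabla u)+\mathrm{div}\bigl(A\nabla(V\nabla u)-\nabla A(V\nabla u)\bigr)$
is correct, and propagating it through gives $+A\nabla(V\nabla u)-\nabla A(V\nabla u)$ inside the divergence, whereas the formula as printed in Theorem~\ref{TLR} carries $+\nabla A(V\nabla u)-A\nabla(V\nabla u)$, i.e.\ the opposite sign on this pair. If one expands the printed divergence and substitutes the gauge identity of Theorem~\ref{TLR0} as written, one does \emph{not} recover the equation for $u$; the $V$-terms come out with a sign clash. So either the printed conservation law or the printed gauge identity in Theorem~\ref{TLR0} carries a sign typo relative to~\cite{LR} (or a convention on $\mathrm{curl}$/$V$ differs). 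Your derivation is the self-consistent one given the gauge identity as stated, and the discrepancy is purely notational bookkeeping, not a gap in the argument.
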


A first consequence of the Theorem \ref{TLR}, is the $\eps$-regularity for approximate intrinsic and extrinsic bi-harmonic maps into $\mathbf{S}^n$. This is a refined version of Theorem 2.3 of Laurain-Rivi\`{e}re \cite{LaR}, since here we only assume the smallness on the bi-energy (rather than $\Vert \nabla^2 u\Vert^2_2 + \Vert \nabla u \Vert^4_4$) and use the fact that the map take values in the sphere $\mathbf{S}^n$.
\begin{thm}
\label{ereg}
There exist $\eps>0$, $0<\delta<1$, $\alpha>0$ and $C>0$ independent of $u$ such that if $u\in W^{2,2}(B_1, \mathbf{S}^n)$ is a solution of 
\beq
\label{nf2}
\Delta^2 u = \Delta (V \nabla u) + div( w \nabla u) + \nabla \omega \nabla u + F\nabla u +f ,
\eeq
where $V \in W^{1,2}(B_1, \mathcal{M}_{n+1} \otimes \Lambda^{1} \R^4)$, $w\in L^2(B_1, \mathcal{M}_{n+1})$, $\omega \in L^2(B_1, \mathrm{s}o_{n+1})$, $F \in L^2\cdot W^{1,2}(B_1,  \mathcal{M}_{n+1} \otimes \Lambda^{1} \R^4)$ and $f\in L^q (B_1, \R^{n+1})$ with $q>1$, which satisfy \eqref{cc} and
\begin{equation}\label{smallbien01}
\Vert \Delta u\Vert_{L^2(B_1)} \leq \eps,
\end{equation}
then we have $u\in W_{loc}^{3, 4/3} (B_1, \R^{n+1})$ and
\beq
\label{F}
\Vert \nabla^3 u \Vert_{L^{\frac{4}{3}}(B(p, \rho))} +\Vert \nabla^2 u \Vert_{L^2(B(p, \rho))} +\Vert \nabla u \Vert_{L^4(B(p, \rho))} \, \leq\, C \rho^\alpha \left(\Vert \Delta u\Vert_{L^2(B_1)}+ \Vert f\Vert_{L^q(B_1)}\right)
\eeq
for all $p\in B_{\frac{1}{2}}$ and $0\leq  \rho \leq \delta$. Moreover, if $f\in L^{4,1}(B_1)$ (Lorentz space, see e.g. \cite{Hu66}, \cite{He02}) then  $u\in W^{3,\infty} (B_{\frac{1}{16}}, \R^{n+1})$ and for $l = 1, 2, 3$ we have
\beq
\label{Fereg} \vert \nabla^l u\vert(0) \leq C_{l}  \left(\Vert \Delta u\Vert_{L^2(B_1)}+\Vert f \Vert_{L^{4,1}(B_1)} + \Vert f\Vert^2_{L^{4,1}(B_1)}\right)
\eeq
for some constant $C_l>0$. In particular, by rescaling we have for $x\in B_1$ and $l=1,2,3$:
\beq
\label{PTWSest} 
\vert \nabla^l u\vert(x) \leq \frac{C_{l}}{(1-|x|)^l}  \left(\Vert \Delta u\Vert_{L^2(B_1)}+\Vert f \Vert_{L^{4,1}(B_1)} + \Vert f\Vert^2_{L^{4,1}(B_1)}\right)\,.
\eeq
\end{thm}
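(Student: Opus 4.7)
The strategy is to reduce \eqref{nf2} to the divergence form provided by Theorem \ref{TLR}, extract a Morrey-type decay by a Campanato iteration, and finally bootstrap in Lorentz spaces to get pointwise bounds. For the first step, I would use the sphere-specific identity $u\cdot\nabla u=0$, which gives $|\nabla u|^{2}=-u\cdot\Delta u$ and hence $\int_{B_1}|\nabla u|^{4}\leq\int_{B_1}|\Delta u|^{2}\leq\eps^{2}$ (cf.\ Remark \ref{auto}). Combined with the structural bounds \eqref{cc} and the smallness \eqref{smallbien01}, this forces $\Vert V\Vert_{W^{1,2}}+\Vert w\Vert_{L^{2}}+\Vert\omega\Vert_{L^{2}}+\Vert F\Vert_{L^{2}\cdot W^{1,2}}\leq C\eps$, so Theorem \ref{TLR} applies and yields $A\in L^{\infty}\cap W^{2,2}$ (close to $\mathcal{SO}_{n+1}$, hence invertible) and $B\in W^{1,4/3}$ with \eqref{E1}, together with
$$
\Delta(A\Delta u)=\mathrm{div}(G)+Af,\qquad G:=2\nabla A\,\Delta u-\Delta A\,\nabla u+Aw\,\nabla u+\nabla A(V\nabla u)-A\,\nabla(V\nabla u)-B\,\nabla u.
$$

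\textbf{Morrey decay.} Next, I would localize on balls $B_{2\rho}(p)\subset B_{3/4}$ and split $A\Delta u=h_{\rho}+\eta_{\rho}$, where $h_{\rho}$ is harmonic on $B_{2\rho}(p)$ with boundary data $A\Delta u|_{\partial B_{2\rho}(p)}$, and $\eta_{\rho}$ solves $\Delta\eta_{\rho}=\mathrm{div}(G)+Af$ with zero boundary values. The harmonic part satisfies the standard decay $\int_{B_{\theta\rho}}|h_{\rho}|^{2}\leq C\theta^{4+2\mu}\int_{B_{2\rho}}|h_{\rho}|^{2}$, while for $\eta_{\rho}$ an $L^{2}$ estimate for $\Delta^{-1}\mathrm{div}$ combined with the smallness of the norms of $V,w,\omega,F,B$ absorbs a factor $C_{0}\eps$, and the contribution of $Af$ contributes $C\rho^{2\beta}\Vert f\Vert_{L^{q}}^{2}$ with $\beta=\beta(q)>0$ via H\"older. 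This leads to a Campanato recursion of the form
$$
\int_{B_{\theta\rho}(p)}|\Delta u|^{2}\leq\bigl(C\theta^{4+2\mu}+C_{0}\eps\bigr)\int_{B_{2\rho}(p)}|\Delta u|^{2}+C\rho^{2\beta}\Vert f\Vert_{L^{q}(B_1)}^{2}.
$$
Choosing first $\theta$ small, then $\eps$ small, iteration in $\rho$ gives $\int_{B_{\rho}(p)}|\Delta u|^{2}\leq C\rho^{2\alpha}\bigl(\Vert\Delta u\Vert_{L^{2}(B_1)}^{2}+\Vert f\Vert_{L^{q}(B_1)}^{2}\bigr)$ uniformly in $p\in B_{1/2}$. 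The companion bounds on $\Vert\nabla^{2}u\Vert_{L^{2}(B_{\rho})}$ and $\Vert\nabla u\Vert_{L^{4}(B_{\rho})}$ with the same $\rho^{\alpha}$ then follow from the spherical identity $|\nabla u|^{4}\leq|\Delta u|^{2}$ and Calder\'on--Zygmund for $\Delta$, while $\Vert\nabla^{3}u\Vert_{L^{4/3}(B_{\rho})}$ is recovered by differentiating \eqref{nf2} once and invoking the decay just obtained; this produces \eqref{F}.

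\textbf{Lorentz bootstrap and rescaling.} To upgrade to the pointwise bound \eqref{Fereg}, I would run a standard bootstrap in the Lorentz scale. The Morrey decay ensures that each term on the right of \eqref{nf2} belongs to $L^{4/3,1}_{\mathrm{loc}}$ with norm controlled by $\eps\Vert\Delta u\Vert_{L^{2}}+\Vert f\Vert_{L^{4,1}}+\Vert f\Vert_{L^{4,1}}^{2}$, the square arising from the quadratic terms $F\nabla u$ and products of the form $V\nabla u$ bootstrapped back into themselves. Standard Calder\'on--Zygmund estimates in Lorentz spaces then yield $\nabla^{3}u\in L^{4,1}_{\mathrm{loc}}\hookrightarrow L^{\infty}_{\mathrm{loc}}(B_{1/16})$, and similarly for $\nabla^{2}u,\nabla u$. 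Finally, \eqref{PTWSest} is a direct consequence of applying \eqref{Fereg} to the rescaled map $u_{x,r}(y):=u(x+r\,y)$ with $r=\tfrac{1-|x|}{2}$ on $B_{1}$, noting that all relevant critical norms are preserved (or controlled) under this scaling.

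\textbf{Main obstacle.} The most delicate point is the term $-A\,\nabla(V\nabla u)$ in $G$: a priori it lies only in the product space $L^{2}\cdot W^{1,2}$, so a naive $L^{2}$ split of $\eta_{\rho}$ fails. I expect to handle this exactly as in Lamm--Rivi\`ere \cite{LR}, by exploiting the compensated-compactness / local Hardy-space structure of the pairing of $\nabla V\in L^{2}$ with $\nabla u\in L^{2}$, so that the corresponding integrated estimate produces a small factor proportional to $\eps$ which can be absorbed into the Campanato recursion uniformly in $p\in B_{1/2}$. Managing this pairing while keeping track of the $\rho$-power and of the Lorentz norms needed in Step 3 is where the bulk of the technical work will lie.
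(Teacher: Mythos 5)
Your overall strategy — invoke the Lamm--Rivi\`ere decomposition (Theorem \ref{TLR}), run a Campanato iteration on the decomposed equation, then bootstrap in Lorentz spaces and rescale — is exactly the route the paper takes, and your identification of $-A\nabla(V\nabla u)$ and the $L^2\cdot W^{1,2}\hookrightarrow L^{4/3,1}$ compensation as the delicate term is also on target.

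The one genuine gap is at the very first step, and it happens to be precisely the new content of this theorem (the paper calls it a ``refined version'' of Laurain--Rivi\`ere because the hypothesis is only $\Vert\Delta u\Vert_{L^2}\leq\eps$ rather than $\Vert\nabla^2 u\Vert_{L^2}^2+\Vert\nabla u\Vert_{L^4}^4$ small). You correctly extract $\Vert\nabla u\Vert_{L^4}^4\leq\Vert\Delta u\Vert_{L^2}^2$ from the sphere identity, but then you assert that together with \eqref{cc} this makes $\Vert V\Vert_{W^{1,2}}+\Vert w\Vert_2+\Vert\omega\Vert_2+\Vert F\Vert_{L^2\cdot W^{1,2}}$ small, which is not automatic: the bounds \eqref{cc} on $w,\omega,\nabla V,F$ all involve $|\nabla^2 u|$, and nothing in the hypotheses directly bounds $\Vert\nabla^2 u\Vert_{L^2(B_1)}$. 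The paper inserts a separate argument here: write $u=\xi+\eta$ on $B_r$ with $\Delta\xi=\Delta u$, $\xi\in W^{2,2}_0(B_r)$ and $\eta$ harmonic, use $L^p$-theory for $\xi$ and the interior estimate $\int_{B_{3r/4}}|\nabla^2\eta|^2\leq Cr^{-2}\int_{B_r}|\nabla\eta|^2$, and then combine with the spherical bound on $\Vert\nabla u\Vert_{L^4}$ to conclude $\Vert\nabla^2 u\Vert_{L^2(B_{3r/4})}^2+\Vert\nabla u\Vert_{L^4(B_{3r/4})}^4\leq C\Vert\Delta u\Vert_{L^2(B_r)}$, with $C$ independent of $r$. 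Only after this interior Hessian estimate can Theorem \ref{TLR} be invoked. Relatedly, in your Morrey step you iterate on $\int_{B_\rho}|\Delta u|^2$ alone and then claim the $\nabla^2 u$ and $\nabla u$ bounds ``follow from the spherical identity and Calder\'on--Zygmund,'' but the spherical identity only gives $\Vert\nabla u\Vert_{L^4(B_\rho)}\lesssim\rho^{\alpha/2}\bigl(\Vert\Delta u\Vert_{L^2}+\Vert f\Vert_q\bigr)^{1/2}$, which has the wrong (sublinear) dependence on the data for \eqref{F}. The paper avoids this by running the Campanato recursion directly on the quantity $\int_{B_\rho}\bigl(|\nabla^2 u|^2+\rho^{-2}|\nabla u|^2\bigr)$, which gives the linear bound. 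You should incorporate the gradient into the iterated functional rather than try to recover it a posteriori.
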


\begin{proof} First of all, in order to apply Theorem \ref{TLR}, we need $\Vert \nabla^2  u \Vert_2^2 + \Vert \nabla  u \Vert_4^4$ to be small, and therefore we have to control $\Vert \nabla^2 u\Vert_{2}$ up to reducing the size of the ball from the assumption \eqref{smallbien01}. Since it is very important that all estimates are independent of the size of the ball, we give a proof on a ball of radius $r \in (0,1]$. Let $u=\xi +\eta$ where $\xi \in W^{2,2}_0(B_r)$ and $\eta \in W^{2,2}(B_r)$ be such that 
$$ \Delta \xi =\Delta u$$
and
$$ \Delta \eta=0.$$
Thanks to the standard $L^p$ theory (see e.g. \cite[Theorem 4, \S 6.3]{Evans}), we have 
\beq
\label{me}
\Vert \nabla^2 \xi \Vert_{2} \leq C\Vert \Delta u\Vert_{2} \quad \text{on } B_{r}\,.
\eeq
By classical theory of harmonic function, see e.g. \cite[Corollary 1.37]{HL}, we have
$$ \int_{B_{\frac{3r}{4}}} \vert \nabla^2 \eta \vert^2 \, dx \leq \frac{C}{r^2} \int_{B_r} \vert \nabla \eta \vert^2 \, dx\,,$$
where $C>0$ is a universal constant. Hence, by harmonicity of $\eta$ and $\xi =0$ on $\partial B_r$ (so that $\Vert \nabla u \Vert_2^2 = \Vert \nabla \xi \Vert_2^2 + \Vert \nabla \eta \Vert_2^2$ on $B_r$), we have
$$ 
\int_{B_{\frac{3r}{4}}} \vert \nabla^2 \eta \vert^2 \, dx \leq \frac{C}{r^2} \int_{B_r} \vert \nabla u\vert^2 \, dx, $$
and thus
$$ \int_{B_{\frac{3r}{4}}} \vert \nabla^2 \eta \vert^2 \, dx \leq C \left(\int_{B_r} \vert \nabla u\vert^4 \, dx\right)^\frac{1}{2}.$$
Finally, using the fact that $u$ takes values in $\mathbf{S}^n$, we automatically have that
$$ \Vert \nabla u\Vert_4^4 \leq \Vert \Delta u \Vert_2^2 ,$$
which insures that (assuming $\Vert \Delta u\Vert_2$ is small)
\beq
\label{est}
\Vert \nabla^2  u \Vert_2^2 + \Vert \nabla  u \Vert_4^4 \leq C\Vert \Delta u\Vert_2 \quad \text{on } B_r\,.
\eeq

Now assuming that $\Vert \nabla^2  u \Vert_2^2 + \Vert \nabla  u \Vert_4^4$ is small on $B_\frac{3}{4}$, thanks to \eqref{cc} and \eqref{est}, hypothesis of  Theorem \ref{TLR} are satisfied on $B_\frac{3}{4}$. Hence we can rewrite our equation as
$$ \Delta (A\Delta u)= \text{div} (K) + Af,$$
where $A\in L^\infty \cap W^{2,2}(B_\frac{3}{4}, \mathcal{G}l_{n+1})$ and $K\in L^2\cdot W^{1,2} (B_\frac{3}{4}) \subset L^{\frac{4}{3},1}(B_\frac{3}{4})$ satisfy
$$\Vert A\Vert_{W^{2,2}} + d(A, \mathcal{SO}_{n+1}) \leq C \left(\Vert \nabla^2 u\Vert_2+ \Vert \nabla u\Vert_4\right)$$
and 
$$
 \Vert K\Vert_{L^{\frac{4}{3},1}} \leq C \left(\Vert \nabla^2 u\Vert_2^2+ \Vert \nabla u\Vert_4^2\right)\,,
$$
where $C>0$ is independent of $u$.

Now let $p\in B_{\frac{1}{2}}$ and $0<\rho<\frac{1}{4}$ so that $B_\rho(p) \subset B_{\frac{3}{4}}$. Use the Hodge decomposition we decompose $A\Delta u$ on $B_\rho(p)$ as  $A\Delta u= R+ S$, where $R\in W_0^{1,2}(B_\rho(p))$ and $S\in W^{1,2}(B_\rho(p))$, such that $R$ satisfies
$$\Delta R = \text{div} (K) + Af $$
and $S$ satisfies
$$
\Delta S = 0
$$
on $B_\rho(p)$. Thanks to the standard $L^p$-theory and Sobolev embeddings, on $B_\rho(p)$ we get (using that $q>1$)
\beq
\label{EC}
\Vert R \Vert_2  \leq C \left(\Vert K\Vert_{\frac{4}{3}}+  \Vert f\Vert_\frac{q+1}{2}\right)  \leq C \left(\eps^{\frac{1}{4}}\left(\Vert \nabla^2 u \Vert_{2} + \frac{1}{\rho} \Vert \nabla u \Vert_2\right) +  \rho^\frac{q-1}{q(q+1)} \Vert f\Vert_q\right),
\eeq
where $C>0$ is independent of $u$. Now using the fact that $S$ is harmonic, thanks to Lemma \ref{hf} we have that $\gamma \mapsto \frac{1}{(\gamma\rho)^4} \int_{B_{\gamma\rho}(p)} \vert S\vert^2\, dx$  is an increasing function and hence for all $\gamma\in(0,1)$
we have
\beq
\label{EC2}
\int_{B_{\gamma \rho}(p)}  \vert S\vert^2\, dx  \leq \gamma^4 \int_{B_\rho(p)}  \vert S\vert^2\, dx. 
\eeq
We then decompose $u$ as follows :  $u= E+ F$ where $E\in W_0^{1,4}(B_\rho(p))$ and $F\in W^{1,4}(B_\rho(p))$ satisfy
$$
\Delta E = A^{-1}(R+S)\hbox{ on } B_\rho(p)$$
and
$$
\Delta F = 0 \hbox{ on } B_\rho(p) .$$
Thanks again to the standard $L^p$-theory and Sobolev embeddings, on $B_\rho(p)$ we get
\beq
\label{EC3}
\frac{1}{\rho} \Vert \nabla E \Vert_2  \leq C \left(\Vert R \Vert_2 + \Vert S \Vert_2\right)\,,
\eeq
where $C>0$ is independent of $u$. Note that $\Vert \nabla u \Vert_2 = \Vert \nabla E \Vert_2 + \Vert \nabla F \Vert_2$ on $B_\rho(p)$.

Now the function  $\gamma \mapsto \frac{1}{(\gamma\rho)^4} \int_{B_{\gamma\rho}(p)} \vert \nabla F \vert^2\, dx$  is increasing since $F$ is harmonic and  we have again, for all $\gamma\in(0,1)$,
\beq
\label{EC4}
\frac{1}{(\gamma\rho)^2}\int_{B_{\gamma\rho}(p)}  \vert \nabla F \vert^2\, dx  \leq \frac{\gamma^2}{\rho^2} \int_{B_\rho(p)}  \vert \nabla F \vert^2\, dx.
\eeq
Then, thanks to (\ref{EC}), (\ref{EC2}), (\ref{EC3}) and (\ref{EC4}), for  $\gamma$ and $\eps$ small enough (with respect to some constant independent of $u$), we have 
\begin{equation}
\int_{B_{\gamma \rho}(p)} \left(\vert \nabla^2 u \vert^2+\frac{\vert \nabla u \vert^2}{(\gamma\rho)^2}\right) \, dx \leq \frac{1}{2} \int_{B_{\rho}(p)} \left(\vert \nabla^2 u \vert^2 + \frac{\vert \nabla u \vert^2}{\rho^2}\right) \, dx + C(\gamma\rho)^\frac{2(q-1)}{q(q+1)}  \Vert f\Vert_q^2\,.
\end{equation}
Here, we have used that the $L^2$ norm of Hessian is controlled by the $L^2$ norms of the Laplacian and the gradient, up to reducing the size of the ball, see \eqref{est}. Iterating this inequality gives the following Morrey type estimate: there exists $0<\delta<\frac{1}{4}$, $\alpha>0$ and $C>0$ independent of $u$ such that 
\beq
\label{F1}
\sup_{p\in B_\frac{1}{2} , 0<\rho< \delta} \rho^{-\alpha}\left(\int_{B_{\rho}(p)} \left(\vert \nabla^2 u \vert^2+\frac{1}{\rho^2}\vert \nabla u \vert^2\right) \, dx\right) \leq C\left(\Vert \Delta u\Vert_{L^2(B_1)}^2+ \Vert f\Vert_{L^q(B_1)}^2\right)\,.
\eeq
Then, for any $p\in B_\frac{1}{2}$ and $0<\rho< \delta$ we have
$$\Vert K \Vert_{L^{\frac{4}{3}}(B(p, \rho))} \leq C\rho^\alpha \left(\Vert \Delta u\Vert_{L^2(B_1)}^2+ \Vert f\Vert_{L^q(B_1)}^2\right)\,.$$
Setting $A\Delta u= R+ S $ on  $B(p, \rho')$ with $\rho'\in \left(\frac{3\rho}{4}, \rho\right)$ as before ($S$ is harmonic), where $\rho'$ will be fixed later, we get 

\beq
\label{C1}
\Vert \nabla R\Vert_{L^{\frac{4}{3}}\left(B(p, \frac{\rho}{2})\right)} \leq C\rho^\alpha\left(\Vert \Delta u\Vert_{L^2(B_1)}^2+ \Vert f\Vert_{L^q(B_1)}^2\right).
\eeq
Using a Green formula, we get for all $y\in B(p, \frac{\rho}{2})$ that 
\begin{align}
\vert \nabla S(y) \vert & \leq \frac{C}{\rho^4} \int_{\partial B(p, \rho')}   \vert A \Delta u\vert \; dx \notag \\
& \leq \frac{C \rho^\frac{3}{2}}{\rho^4} \left( \int_{\partial B(p, \rho')}   \vert A \Delta u\vert^2 \; dx\right)^\frac{1}{2} \notag\\ 
& \leq \frac{C \rho^\frac{3}{2}}{\rho^4} \left( \frac{4}{\rho}\int_{ B(p, \rho) \setminus B(p, \frac{3\rho}{4})}   \vert A \Delta u\vert^2 \; dx\right)^\frac{1}{2},
\end{align}
the last inequality is obtained thanks to the mean value theorem, by choosing $\rho'$ correctly. Thanks to (\ref{E1}) and (\ref{F1}), for any $p\in B_\frac{1}{2}$ and all $y\in B(p, \frac{\rho}{2})$ we get  

$$\vert \nabla S(y)  \vert \leq C\rho^{\frac{\alpha}{2} -3} \left(\Vert \Delta u\Vert_{L^2(B_1)}+ \Vert f\Vert_{L^q(B_1)}\right)$$
and 
\beq
\label{D1}
 \Vert \nabla S \Vert_{L^{\frac{4}{3}}(B(p, \frac{\rho}{2}))} \leq C\rho^\frac{\alpha}{2} \left(\Vert \Delta u\Vert_{L^2(B_1)}+ \Vert f\Vert_{L^q(B_1)}\right) .
 \eeq
Thanks to (\ref{C1}) and (\ref{D1}), we get for any $p\in B_{\frac{1}{2}}$ and all $0<\rho< \frac{\delta}{4}$,
\beq
\label{F2}
\left( \int_{B(p, \frac{\rho}{2}))}  \vert \nabla(A\Delta u)\vert^\frac{4}{3} \; dx\right)^{\frac{3}{4}} \leq  C \rho^\frac{\alpha}{2}\left(\Vert \Delta u\Vert_{L^2(B_1)}+ \Vert f\Vert_{L^q(B_1)}\right).
\eeq
Finally thanks to (\ref{F1}) and (\ref{F2}) we get (\ref{F}).\\

Then we can bootstrap those estimates so that there exists $\beta>0$ such that 
\beq
\sup_{p\in B_\frac{1}{4} , 0<\rho <\frac{\delta}{8}} \rho^{-\beta} \int_{B_{\rho}(p)} \vert \Delta^2 u \vert \, dx   \leq C \left(\Vert \Delta u\Vert_{L^2(B_1)}^2+ \Vert f\Vert_{L^q(B_1)}^2\right).
\eeq
Now suppose $f\in L^{4,1}(B_1)$ instead of $L^q(B_1)$ for some $q>1$ and the above estimates are still valid by replacing $\Vert f \Vert_{L^q(B_1)}$ with $\Vert f \Vert_{L^{4,1}(B_1)}$ . Then a classical Green formula gives, for all $p\in B_{\frac{1}{8}}$
\be
\begin{split}
&|\nabla^3 u|(p)\le C \frac{1}{|x-p|^3}\ast \chi_{B_{\frac{{1}}{4}}}\ |\Delta^2 u|+C \left(\Vert \Delta u\Vert_{L^2(B_1)}+ \Vert f\Vert_{L^{4,1}(B_1)}\right),\\
&|\nabla^2 u|(p)\le C \frac{1}{|x-p|^2}\ast \chi_{B_{\frac{1}{4}}}\ |\Delta^2 u|+C  \left(\Vert \Delta u\Vert_{L^2(B_1)}+ \Vert f\Vert_{L^{4,1}(B_1)}\right),\\
& |\nabla u|(p)\le C \frac{1}{|x-p|}\ast \chi_{B_{\frac{1}{4}}}\ |\Delta^2 u|+C \left(\Vert \Delta u\Vert_{L^2(B_1)}+ \Vert f\Vert_{L^{4,1}(B_1)}\right)\,,
\end{split}
\ee
where $\chi_{B_{\frac{1}{4}}}$ is the characteristic function of the ball $B_{\frac{1}{4}}$. We use the Green formula on a cut-off of u, the remaining terms are easily control thanks to (\ref{F}). Together with injections proved by Adams in \cite{Ad}, see also exercise 6.1.6 of \cite{Gra2}, the latter shows that
$$
\Vert \nabla^3 u \Vert_{L^{r}\left(B_\frac{1}{8}\right)}+ \Vert \nabla^2 u \Vert_{L^{r}\left(B_\frac{1}{8}\right)} + \Vert \nabla u \Vert_{L^{r}\left(B_\frac{1}{8}\right)} \leq C  \left(\Vert \Delta u\Vert_{L^2(B_1)}+ \Vert f\Vert_{L^{4,1}(B_1)} + \Vert f\Vert^2_{L^{4,1}(B_1)}\right),
$$
for some $r>4/3$. Then bootstrapping this estimate, we get 
$$\Vert  \nabla^3 u\Vert_{L^{\bar{q}} \left(B_\frac{1}{16}\right)} + \Vert  \nabla^2 u\Vert_{L^{\bar{p}} \left(B_\frac{1}{16}\right)}  + \Vert  \nabla u\Vert_{L^{\bar{p}}\left(B_\frac{1}{16}\right)}  \leq C \left(\Vert \Delta u\Vert_{L^2(B_1)}+ \Vert f\Vert_{L^{4,1}(B_1)} + \Vert f\Vert^2_{L^{4,1}(B_1)}\right),$$
where $\bar{q}$ is the limiting exponent of the bootstrapping given by the Sobolev injection of $W^{1,q}$ into $L^{\bar{q}}$. Indeed, thanks to (\ref{cc}), the only limiting term for the bootstrap is the regularity of $f$. But thanks to the embedding of $W^{1,(4,1)}$ into $L^\infty$, see e.g. \cite{KKM99} and \cite{Tar98}, we can conclude the proof of the theorem.
\end{proof}

\begin{lemma}
\label{hf}
 Let $v$ be a harmonic function on $B_1$. For every point $p$ in $B_1$, the function
$$\rho \mapsto \frac{1}{\rho^4}  \int_{B(p,\rho)} \vert  v\vert^2 \; dx$$
is increasing.
\end{lemma}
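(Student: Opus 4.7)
The plan is to reduce the statement to the subharmonicity of $|v|^2$ and derive the monotonicity via a single integration by parts. Since $v$ is harmonic,
\begin{equation*}
\Delta |v|^2 = 2|\nabla v|^2 \geq 0,
\end{equation*}
so $u := |v|^2$ is a (smooth) subharmonic function on $B_1$. The conclusion is then just the classical fact that ball-averages of subharmonic functions are non-decreasing, applied in dimension $4$ where $|B(p,\rho)| = \frac{\pi^2}{2}\rho^4$.

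To make this concrete, set $\psi(\rho) := \int_{B(p,\rho)} u\, dx$, so the claim is equivalent to $\rho \psi'(\rho) \geq 4\psi(\rho)$ for a.e. $\rho$ with $B(p,\rho)\subset B_1$. By the coarea formula, $\psi'(\rho) = \int_{\partial B(p,\rho)} u\, d\sigma$. I would then apply Green's second identity on $B(p,\rho)$ with the test function $\varphi(x) := |x-p|^2 - \rho^2$, which satisfies $\varphi \leq 0$ on $B(p,\rho)$, $\varphi \equiv 0$ on $\partial B(p,\rho)$, $\Delta \varphi \equiv 8$ in $\R^4$, and $\partial_\nu \varphi = 2\rho$ on $\partial B(p,\rho)$. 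This yields
\begin{equation*}
8\int_{B(p,\rho)} u\, dx \,=\, \int_{B(p,\rho)} \varphi\, \Delta u\, dx + 2\rho \int_{\partial B(p,\rho)} u\, d\sigma.
\end{equation*}

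Since $\varphi \leq 0$ and $\Delta u \geq 0$, the first term on the right is $\leq 0$, so
\begin{equation*}
\rho\, \psi'(\rho) \,=\, \rho \int_{\partial B(p,\rho)} u\, d\sigma \,\geq\, 4\int_{B(p,\rho)} u\, dx \,=\, 4\psi(\rho).
\end{equation*}
Equivalently $(\psi(\rho)/\rho^4)' \geq 0$ a.e., and since $\psi$ is absolutely continuous in $\rho$ this gives the desired monotonicity of $\rho \mapsto \rho^{-4}\int_{B(p,\rho)} |v|^2 \, dx$. There is no real obstacle here: the only substantive input is the non-negativity of $\Delta |v|^2$, and the rest is a one-line integration by parts exploiting that the auxiliary function $|x-p|^2-\rho^2$ vanishes on the boundary sphere.
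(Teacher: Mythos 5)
Your argument is correct, and it is genuinely different from the one in the paper. The paper expands the harmonic function $v$ in spherical harmonics on $\partial B(p,\rho)$, writes $v(r,\theta)=\sum a_k^l (r/\rho)^l\phi_k^l$ inside the ball, and then computes $\int_{\partial B(p,\rho)}|v|^2\,d\sigma = \sum |a_k^l|^2\rho^3$ and $\int_{B(p,\rho)}|v|^2\,dx = \sum |a_k^l|^2\rho^4/(2l+4)$, so that the derivative of $\rho^{-4}\int_{B(p,\rho)}|v|^2$ comes out as the nonnegative series $\rho^{-1}\sum |a_k^l|^2\cdot\tfrac{2l}{2l+4}$. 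You instead observe that $|v|^2$ is subharmonic and run the standard Green's-identity proof of monotonicity of ball averages with $\varphi(x)=|x-p|^2-\rho^2$: since $\Delta\varphi\equiv 8$, $\varphi\le 0$ on $B(p,\rho)$, $\varphi=0$ and $\partial_\nu\varphi=2\rho$ on the sphere, Green's second identity gives $8\psi(\rho)=\int_{B(p,\rho)}\varphi\,\Delta|v|^2\,dx+2\rho\psi'(\rho)\le 2\rho\psi'(\rho)$, which is exactly $\bigl(\psi(\rho)/\rho^4\bigr)'\ge 0$. Your route is shorter, avoids the eigenfunction expansion entirely, works verbatim in any dimension (with $\Delta\varphi=2n$), and relies only on the qualitative fact $\Delta|v|^2=2|\nabla v|^2\ge 0$; what the paper's computation buys in exchange is an explicit formula for the derivative of the normalized energy in terms of the Fourier coefficients, which pins down exactly when equality holds (only the $l=0$ mode present, i.e.\ $v$ constant). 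For the purpose of this lemma, either proof suffices, and both implicitly require $B(p,\rho)\subset B_1$.
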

\begin{proof}
\beq
\label{hf1}
\frac{d}{d\rho} \left[  \frac{1}{\rho^4}  \int_{B(p,\rho)} \vert  v\vert^2 \; dx \right]= \frac{-4}{\rho^5}  \int_{B(p,\rho)} \vert v\vert^2 \; dx +\frac{1}{\rho^4}  \int_{\partial B(p,\rho)} \vert  v\vert^2 \; d\sigma .
\eeq
Let $(\phi_k^l)_{l,k}$ be an $L^2$-basis of eigenfunctions of the Laplacian on $\mathbf{S}^3$. In particular
$$\Delta \phi^l_k=-l(l+2)\phi_k^l$$
We have
$$v(\rho, \theta) =\sum_{l=0}^{+\infty} \sum_{k=1}^{N_l} a_k^l   \phi_k^l,  \hbox{ on } \partial B(p,\rho)$$
where $N_l$ is the dimension of the eigenspace corresponding to $-l(l+2)$, see \cite{Groe}. Hence
\beq
v(r,\theta) =\sum_{l=0}^{+\infty} \sum_{k=1}^{N_l}  a_k^l \left(\frac{r}{\rho}\right)^l  \phi_k^l, \hbox{ on } B(p,\rho). 
\eeq
Then
\beq
\label{hf2}
\int_{\partial B(p,\rho)} \vert v\vert^2 \; dx =\sum_{l=0}^{+\infty} \sum_{k=1}^{N_l}  \vert a_k^l \vert^2 \rho^3
\eeq
and 
\beq
\label{hf3}
 \int_{B(p,\rho)} \vert  v\vert^2 \; dx=\sum_{l=0}^{+\infty} \sum_{k=1}^{N_l}  \frac{ \vert a_k^l \vert^2}{2l+4} \rho^4.
 \eeq
Finally putting (\ref{hf1}), (\ref{hf2}) and (\ref{hf3}) together we get the desired result.
\end{proof}


\section{Further remarks and open questions}\label{APPB}

By the proof of Theorem \ref{ereg} we know that $\|\Delta^2 u\|_{L^1_{loc}(B_1)}$ is small for a $W^{2,2}$ weakly intrinsic or extrinsic bi-harmonic map $u$ defined on $B_1$ with small bi-energy. Analogously, in Theorem A.4 of Lamm and the second author's work \cite{LL13} for weakly harmonic maps, $|\Delta u| \simeq |\nabla u|^2$ is estimated to be in the local Hardy space $h^1 \subsetneq L^1$ for weakly harmonic maps with small Dirichlet energy on the $2$-disk. In this case, the improved global Hardy estimate turns out to be equivalent to the use of the (1st order) Hardy inequality plus the $\eps$-regularity $|\nabla u| \leq C\varepsilon_0/(1-|x|)$ for weakly harmonic maps on the $2$-disk. It is worth remarking that such improved global estimate is a typical compensation phenomenon for the special Jacobian structure of the harmonic map equation
$$
-\Delta u \,=\, A(u)(\nabla u, \nabla u) \,=\, \Omega \cdot \nabla u\,,
$$
where $\Omega$ is anti-symmetric. Given the special structure of the intrinsic or extrinsic bi-harmonic map equation, it is quite natural to conjecture that $|\Delta^2 u|$ is in $L^1(B_1)$ (with small $L^1$-norm), but as far as we know this has not yet been proved. We leave it as an interesting open question. If this was true, given the $\eps$-regularity Theorem \ref{ereg} and the following theorem (Theorem \ref{appblemma}), then one can bypass the use of the Hardy inequality (Theorem \ref{hardyineq}) in the proof of the energy convexity results (Theorems \ref{THM1}, \ref{extrinsicbiharmonic} and \ref{THM2}), which we will explain below. In a forthcoming paper, we will explore more on this interesting relations between the Hardy inequality, $\eps$-regularity, compensation phenomenon of the structure of the bi-harmonic map equations and the global integrability of the solution $u$.

\begin{thm} \label{appblemma} Let $f\geq 0$ satisfy $$f(x) \leq \frac{C_0}{(1-|x|)^4} \quad \text{a.e.}\,\, x\in B_1 \quad \text{and}\quad \|f\|_1 \leq C_0$$ for some constant $C_0>0$. Then there exists a function $\psi \in L^\infty\cap W^{2,2}(B_1)$ solving the boundary value problem
 \begin{equation}
  \left\{
   \begin{aligned}
     \Delta^2 \psi\,&=\, f && \text{in }\, B_1\,, \\
    \psi\,&=\, \partial_\nu \psi \,= \,0 && \text{on }\, \partial B_1\,.\\
   \end{aligned}
 \right.
   \end{equation}
Moreover, there exists a constant $C>0$ such that
\begin{equation}\label{psii1}
\Vert \Delta \psi \Vert_2+ \Vert \nabla \psi\Vert_4 +\Vert \psi\Vert_\infty \,\leq\, C \,C_0\,.
\end{equation}
\end{thm}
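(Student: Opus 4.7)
The plan is to construct $\psi$ as the limit of approximations and read off \eqref{psii1} from uniform bounds on the approximants. I would truncate by setting $f_k := \min(f,k) \in L^\infty(B_1)$; these inherit $f_k \leq C_0/(1-|x|)^4$ and $\|f_k\|_1 \leq C_0$, with $f_k \nearrow f$. Since the bilinear form $(u,v)\mapsto \int_{B_1}\Delta u\,\Delta v\,dx$ is coercive on $W^{2,2}_0(B_1)$, Lax--Milgram produces a unique weak solution $\psi_k \in W^{2,2}_0(B_1)$ of $\Delta^2 \psi_k=f_k$ under the clamped boundary conditions, and $\psi_k \in W^{4,q}_{\mathrm{loc}}$ for every $q<\infty$ by standard regularity.

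For the $W^{2,2}$ estimate I would test the equation against $\psi_k$ and split via Cauchy--Schwarz with the weight $(1-|x|)^{\pm 2}$:
\begin{equation*}
\|\Delta \psi_k\|_2^2 \,=\, \int_{B_1} f_k \psi_k\,dx \,\leq\, \Bigl(\int_{B_1} f_k^2 (1-|x|)^4\,dx\Bigr)^{\!1/2}\Bigl(\int_{B_1} \frac{\psi_k^2}{(1-|x|)^4}\,dx\Bigr)^{\!1/2}.
\end{equation*}
Since $f_k^2(1-|x|)^4 \leq C_0 f_k$, the first factor is at most $C_0$, while the second is controlled by $C\|\Delta\psi_k\|_2$ via the Hardy inequality of Theorem~\ref{hardyineq}. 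This forces $\|\Delta\psi_k\|_2\leq C C_0$, and the borderline four-dimensional Sobolev embedding $W^{2,2}_0(B_1) \hookrightarrow W^{1,4}(B_1)$ then gives $\|\nabla\psi_k\|_4 \leq C C_0$.

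The $L^\infty$ bound is the crux. For fixed $x_0 \in B_1$ with $d := 1-|x_0|>0$, I would rescale by setting $\tilde\psi_k(z) := \psi_k(x_0+(d/2)z)$ and $\tilde f_k(z) := (d/2)^4 f_k(x_0+(d/2)z)$, so that $\Delta^2 \tilde\psi_k = \tilde f_k$ on $B_1$. On $B(x_0,d/2)$ one has $1-|x| \geq d/2$, and this exactly cancels the $(d/2)^4$ factor, giving $\|\tilde f_k\|_{L^\infty(B_1)} \leq C_0$. For the $L^2$ norm of $\tilde\psi_k$, the bound $(1-|x|)^2 \leq 9(d/2)^2$ on $B(x_0,d/2)$ compensates the $(d/2)^{-4}$ coming from the change of variables, and applying Hardy once more yields $\|\tilde\psi_k\|_{L^2(B_1)} \leq C C_0$. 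The standard interior $W^{4,2}$ estimate for $\Delta^2$ combined with the four-dimensional Sobolev embedding $W^{4,2}(B_{1/2}) \hookrightarrow C^0(\overline{B_{1/2}})$ then produces
\begin{equation*}
|\psi_k(x_0)| \,=\, |\tilde\psi_k(0)| \,\leq\, C\bigl(\|\tilde f_k\|_{L^2(B_1)} + \|\tilde\psi_k\|_{L^2(B_1)}\bigr) \,\leq\, C C_0,
\end{equation*}
with constants uniform in $x_0$. Passage to the limit is then routine: by Boggio's positivity preserving property for the clamped plate on a ball, $\{\psi_k\}$ is monotone increasing, so monotone convergence together with the uniform $L^\infty \cap W^{2,2}$ bounds yields a limit $\psi \in L^\infty \cap W^{2,2}_0(B_1)$ that solves $\Delta^2\psi = f$ weakly and satisfies \eqref{psii1}.

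I expect the $L^\infty$ step to be the main obstacle, since the pointwise bound $f(x) \leq C_0/(1-|x|)^4$ is precisely borderline in $L^p$ for any $p>1$, so no naive global Calder\'on--Zygmund plus Sobolev argument can work. The rescaling above circumvents this by performing the elliptic estimate at the natural interior scale $d(x_0)$, where the decay rate of $f$ absorbs exactly the loss coming from rescaling, while the Hardy inequality furnishes the $L^2$ control of $\psi_k$ that feeds into the interior estimate.
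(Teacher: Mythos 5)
Your proof is correct, but it takes a genuinely different route from the paper's. The paper constructs $\psi$ via the \emph{explicit} Green function $G(x,y)$ for the clamped plate on $B_1$ (Boggio's formula), proves two-sided pointwise bounds on $G$ in terms of a dyadic sum of bumps $l_x(y)=\sum_j \theta\bigl(2^j(1-|x|)^{-1}(x-y)\bigr)$, and then obtains $\|\psi\|_\infty \le CC_0$ directly from $\|f\|_1\le C_0$ together with the pointwise decay $f\le C_0(1-|x|)^{-4}$; the $L^2$ and $L^4$ bounds then follow from $\int|\Delta\psi|^2=\int f\psi\le\|f\|_1\|\psi\|_\infty$ and the $W^{2,2}_0\hookrightarrow W^{1,4}$ embedding. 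You instead get existence from Lax--Milgram on truncations, get $\|\Delta\psi_k\|_2\lesssim C_0$ by testing against $\psi_k$ and applying the second-order Hardy inequality, and get the $L^\infty$ bound by rescaling to the natural scale $d=1-|x_0|$, where the decay of $f$ makes $\tilde f$ bounded and Hardy makes $\tilde\psi$ uniformly $L^2$, so an interior $W^{4,2}$ estimate and the four-dimensional embedding $W^{4,2}\hookrightarrow C^0$ give pointwise control at the center; passage to the limit uses Boggio positivity (monotonicity) plus the uniform bounds. Both arguments are sound and the constants you claim check out (e.g.\ $d/2 < 1-|x| < 3d/2$ on $B(x_0,d/2)$, so $\tilde f_k\le C_0$ and the $(d/2)^{-4}$ Jacobian is absorbed by $(1-|x|)^4\le (3d/2)^4$; in your write-up the exponent should be $4$, not $2$, but the inequality you state still implies what you need). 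What the paper's Green-function approach buys is \emph{independence from the Hardy inequality}: the whole point of this appendix (see the remark after Lemma~\ref{wentelemma}) is to use Theorem~\ref{appblemma} together with the $\eps$-regularity to \emph{bypass} Theorem~\ref{hardyineq} in the energy convexity proofs and to exhibit the two as (almost) equivalent compensation phenomena. Your proof, by invoking Hardy at two key steps, is mathematically valid as a proof of the stated theorem but is logically circular in that broader program, and so would not serve the purpose for which the theorem was introduced. What your approach buys is a shorter, more standard elliptic-PDE argument that does not require the explicit form of Boggio's Green function or the dyadic estimate~\eqref{Lest2}.
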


\begin{proof}
The idea of the proof follows \cite[Proposition 1.68]{Sem}.  Since the Green's function of $\Delta^2$ on $B_1$ with the clamped plate boundary values is given explicitly by
$$
G(x,y) = c \left(\ln|x-y| - \ln\left(\left|\frac{x}{|x|} - |x|y\right|\right) - \frac{|x-y|^2}{2 \left|\frac{x}{|x|} - |x|y\right|^2}  + \frac{1}{2}\right)
$$
for some normalizing constant $c<0$ (see \cite{B1905} or \cite[Lemma 2.1]{GS97}), we can write
\begin{align}\label{psicon}
\psi(x) &= c\int_{B_1} f(y) \left(\ln|x-y| - \ln\left(\left|\frac{x}{|x|} - |x|y\right|\right) - \frac{|x-y|^2}{2 \left|\frac{x}{|x|} - |x|y\right|^2}  + \frac{1}{2}\right)dy\,.
\end{align}

Let $\theta\in C^{\infty}_0(B_1)$ be a smooth bump function such that $0\leq \theta \leq 1$, $\theta=1$ in $B_{\frac{1}{16}}$ and $\text{spt}(\theta)\subset B_{\frac{1}{8}}$. For $x\in B_1$ we define
\begin{equation}\label{lfunc}
l_x(y) := \sum_{j=0}^{\infty}\theta\left(2^j(1-|x|)^{-1}(x-y)\right) \quad \text{for }y\in B_1\,.
\end{equation}
We claim that for any $x,y\in B_1$
\begin{equation}\label{Lest2}
-20\ln2 \leq \ln|x-y| - \ln\left(\left|\frac{x}{|x|} - |x|y\right|\right) - \frac{|x-y|^2}{2 \left|\frac{x}{|x|} - |x|y\right|^2}  + \frac{1}{2}+ l_x(y)\ln2  \leq 20\ln 2\,,
 \end{equation}
To see this, it is clear that for $x,y\in B_1$ such that
\begin{equation}\label{Temp-9090-2}
2^{-k}\leq |x-y|\leq 2^{-k+1}, \quad k \in \N_0
\end{equation}
we have
\begin{equation}\label{theta4409909}
-k\ln 2\,\leq \ln|x-y|\,\leq (-k+1)\ln 2\,.
\end{equation}
Now note that
$$
1-|x|-|x-y| \leq 1-|x|+|x|-|y|= 1-|y| \leq 1-|x|+|x-y|\,,
$$
and therefore for $x\in B_{1-2^{-i-1}} \setminus  B_{1-2^{-i}}$, i.e., $1-|x| \in [2^{-i-1},2^{-i}], i \in \N_0$ (with $\bar{B}_0 = \emptyset$) and any $y\in B_1$ satisfying \eqref{Temp-9090-2}, we have
$$
1-|y| \,\in\,
\left\{
   \begin{aligned}
     &\left[2^{-i-1}-2^{-k+1},2^{-i}+2^{-k+1}\right]\quad &&\text{if } k\geq i+4;\\
    &\left[0,2^{-i}+2^{-k+1}\right]\quad &&\text{if } k\leq i+3.\\
   \end{aligned}
 \right.
$$
We also have
\begin{align}\label{TEMO2000009}
0 \leq (1-|x|)(1-|y|) &\leq (1-|x|^2)(1-|y|^2) \notag\\
   &= \left|\frac{x}{|x|}- |x|y\right|^2 - |x-y|^2 \leq 2^2(1-|x|)(1-|y|)\,,
\end{align}
and thus
$$
\left|\frac{x}{|x|}- |x|y\right|^2 - |x-y|^2 \in
\left\{
   \begin{aligned}
     &\left[2^{-2i-2}-2^{-i-k},2^{-2i+2}+2^{-i-k+3}\right]\quad &&\text{if } k\geq i+4;\\
    &\left[0,2^{-2i+2}+2^{-i-k+3}\right]\quad &&\text{if } k\leq i+3.\\
   \end{aligned}
 \right.
$$
Combining this with \eqref{Temp-9090-2} we get
$$
\left|\frac{x}{|x|}- |x|y\right|^2 \in\left\{
   \begin{aligned}
     &\left[2^{-2i-2}-2^{-i-k} + 2^{-2k},2^{-2i+2}+2^{-i-k+3}+2^{-2k+2} \right] \text{if } k\geq i+4;\\
    &\left[2^{-2k},2^{-2i+2}+2^{-i-k+3}+2^{-2k+2}\right]\,\ \text{if } k\leq i+3.\\
   \end{aligned}
 \right.
$$

Now using the facts that for $k\geq i+4$ we have
$$
2^{-2i-2}-2^{-i-k} + 2^{-2k} \geq 2^{-2i-4} \quad \text{and}\quad 2^{-2i+2}+2^{-i-k+3}+2^{-2k+2}  \leq 2^{-2i+4}
$$
and for $k \leq i+3$ we have
$$
2^{-2i+2}+2^{-i-k+3}+2^{-2k+2}  \leq 2^{-2k+10}\,,
$$
we arrive at
$$
\left|\frac{x}{|x|}- |x|y\right|^2 \,\in\, \left\{
   \begin{aligned}
     &\left[2^{-2i-4},2^{-2i+4}\right]\quad &&\text{if } k\geq i+4;\\
    &\left[2^{-2k},2^{-2k+10}\right]\quad &&\text{if } k\leq i+3,\\
   \end{aligned}
 \right.
$$
and hence
\begin{equation}\label{Lfunction1}
-\ln \left|\frac{x}{|x|}- |x|y\right|\, \in \,\left\{
   \begin{aligned}
     &[(i-2)\ln2, (i+2)\ln2]\quad &&\text{if } k\geq i+4;\\
    &[(k-5)\ln2, k\ln2]\quad &&\text{if } k\leq i+3.\\
   \end{aligned}
 \right.
\end{equation}
Combining \eqref{theta4409909} and \eqref{Lfunction1} we get
\begin{equation}\label{Lfunction1-1-1}
\ln|x-y| - \ln\left(\left|\frac{x}{|x|} - |x|y\right|\right) \in \left\{
   \begin{aligned}
     &[(-k+i-2)\ln2, (-k+i+3)\ln2]\quad &&\text{if } k\geq i+4;\\
    &[-5\ln2, \ln 2] \quad(\text{in fact, }\,[-5\ln2, 0])\quad &&\text{if } k\leq i+3,\\
   \end{aligned}
 \right.
\end{equation}
and
\begin{equation}\label{Lfunction1-1-100}
\frac{|x-y|^2}{2 \left|\frac{x}{|x|} - |x|y\right|^2} \in \left\{
   \begin{aligned}
     &\left[0, 1/4\right]\quad &&\text{if } k\geq i+4;\\
    &\left[2^{-6}, 1\right] \quad &&\text{if } k\leq i+3,\\
   \end{aligned}
 \right.
\end{equation}
for any $x\in B_{1-2^{-i-1}} \setminus  B_{1-2^{-i}}, i\geq 0$, and any $y\in B_1$ satisfying \eqref{Temp-9090-2} for some $k\geq 0$.

Now for any $x\in B_{1-2^{-i-1}} \setminus  B_{1-2^{-i}}, i\geq 0$, and any $y\in B_1$ satisfying \eqref{Temp-9090-2}, since $0\leq \theta \leq 1$, $\theta=1$ in $B_{\frac{1}{16}}$ and $\text{spt}(\theta)\subset B_{\frac{1}{8}}$, we get that for any $j\geq 0$
$$
\theta\left(2^j(1-|x|)^{-1}(x-y)\right)= 0  \quad \text{for}\quad  |x-y|\ge  2^{-j-3}(1-|x|) \in [2^{-j-i-4}, 2^{-j-i-3}]
$$
and
$$
\theta\left(2^j(1-|x|)^{-1}(x-y)\right) = 1 \quad \text{for}\quad  |x-y|\leq 2^{-j-4}(1-|x|)\in [2^{-j-i-5}, 2^{-j-i-4}]\,.
$$
Therefore (combining with \eqref{Temp-9090-2}),
\begin{equation}\label{theta229090}
\theta\left(2^j(1-|x|)^{-1}(x-y)\right)= 0  \quad \text{for}\quad j \geq k-i-3
\end{equation}
and
\begin{equation}\label{theta110909}
\theta\left(2^j(1-|x|)^{-1}(x-y)\right)= 1  \quad \text{if}\quad k-1 \geq j+ i+5 \quad(\text{i.e. }\, j\leq k-i-6)\,.
\end{equation}

Hence for any $x\in B_{1-2^{-i-1}} \setminus  B_{1-2^{-i}}, i\geq 0$ and any $y\in B_1$ such that $2^{-k}\leq |x-y|\leq 2^{-k+1}$ for some $k=0,1,2,...,$ \eqref{lfunc}, \eqref{theta229090} and \eqref{theta110909} imply
\begin{equation}\label{theta559999999}
\left\{
   \begin{aligned}
     &k-i - 10\,\leq\,l_x(y)\,\leq\, k-i+10 \quad &&\text{if } k\geq i+4;\\
    & l_x(y)=0 \quad &&\text{if } k\leq i+3\,.\\
   \end{aligned}
 \right.
\end{equation}
Combining \eqref{Lfunction1-1-1}, \eqref{Lfunction1-1-100} and \eqref{theta559999999} gives \eqref{Lest2}.

Therefore, in order to obtain the $L^\infty$-bound of $\psi$ on $B_1$ as in \eqref{psii1}, it suffices to bound $\int_{B_1} f(y) l_x(y) dy$ since we have \eqref{psicon} and \eqref{Lest2}. Therefore, using the facts that $f\geq 0, \,0\leq \theta \leq 1$ and $\text{spt}(\theta)\subset B_{\frac{1}{8}}$, for any $x\in B_1$ we have
\begin{align}\label{FINALLLY}
&\left| \int_{B_1} f(y) l_x(y)dy\right| \leq \sum_{j=0}^{\infty} \int_{B_1} f(y) \theta(2^j(1-|x|)^{-1}(x-y))dy\notag\\
=\,& \sum_{j=0}^{\infty}\int_{B_{2^{-j-3}(1-|x|)}(x)} f(y)\theta\left(2^j(1-|x|)^{-1}(x-y)\right)dy\notag\\
\leq \,& \sum_{j=0}^{\infty} \int_{B_{2^{-j-3}(1-|x|)}(x)} \frac{C_0}{(1-|y|)^4}dy \notag\\
\leq \,& \sum_{j=0}^{\infty} \int_{B_{2^{-j-3}(1-|x|)}(x)} \frac{C_0}{\left(\frac{7}{8}\right)^4(1-|x|)^4}dy \notag\\
 \leq \,& C_0\left(\frac{8}{7}\right)^4\frac{\pi^2}{2}\sum_{j=0}^{\infty} 2^{-4j-12} \,\leq\, CC_0\,.
\end{align}



Combining \eqref{psicon}, \eqref{Lest2}, \eqref{FINALLLY} and the fact that  $\|f\|_1\leq C_0$ yields
$$
|\psi(x)| \leq  CC_0\,.
$$
This gives the desired $L^{\infty}$-bound of $\psi$ on $B_1$. The $L^2$-estimate for $\Delta \psi$ and $L^4$-estimate for $\nabla \psi$ simply follows from an integration by parts argument.
\end{proof}

Now let us explain how we can bypass the Hardy inequality (Theorem \ref{hardyineq}) in the proof of the energy convexity (Theorems \ref{THM1}, \ref{extrinsicbiharmonic} and \ref{THM2}) using Theorem \ref{appblemma} and the $\eps$-regularity Theorem \ref{ereg}, assuming that the $L^1$-norm of $\Delta^2 u$ is small for the intrinsic or extrinsic bi-harmonic maps. This is a quite natural assumption for most of the applications, for example, this is valid if $u \in W^{3,\frac{4}{3}}(B_1)$ and $\|\nabla^2 u\|_{L^2}$ is small or if the bi-harmonic map is defined on a larger set containing $B_1$.

\begin{lemma}
\label{lf}
There exist constants $\varepsilon_0>0, C>0$ such that if $u\in W^{2,2}(B_1, \mathbf{S}^n)$ is a weakly intrinsic or extrinsic bi-harmonic map with 
$$\int_{B_1}|\Delta u|^2 \, dx \leq\varepsilon_0 \quad \text{and} \quad \int_{B_1}|\Delta^2 u| \, dx \leq \varepsilon_0\,,$$ 
then there exists a solution $\phi$ of
\beq
\label{Eq1}
\left\{
\begin{aligned}
\Delta^2 \phi &= q(u) \quad \quad\; \text{ on }  B_1\,,\\
\phi & =\partial_\nu \phi =0 \quad \text{ on }  \partial B_1\,.
\end{aligned}
\right.
\eeq
such that 
\beq
\label{phipro}
 \Vert \Delta \phi \Vert_2+ \Vert \nabla \phi\Vert_4 +\Vert \phi\Vert_\infty \leq C\varepsilon_0\,.
 \eeq
Here $q(u) = |\nabla u|^4, |\nabla u|^2|\Delta u|, |\nabla^2 u|^2$ or $|\nabla u||\nabla \Delta u|$. 
\end{lemma}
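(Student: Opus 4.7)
The strategy is to apply Theorem \ref{appblemma} directly to $f=q(u)$ with constant $C_0=C\varepsilon_0$. This requires verifying, for each of the four choices of $q(u)$, the pointwise bound $q(u)(x)\leq C\varepsilon_0\,(1-|x|)^{-4}$ and the $L^1$ bound $\|q(u)\|_{L^1(B_1)}\leq C\varepsilon_0$; once both are in hand, Theorem \ref{appblemma} immediately produces a $\phi$ solving the boundary value problem and satisfying $\|\Delta\phi\|_2+\|\nabla\phi\|_4+\|\phi\|_\infty\leq CC_0=C\varepsilon_0$.

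The pointwise bound is the clean part. Since $u\colon B_1\to\mathbf{S}^n$ is a weakly bi-harmonic map with $\|\Delta u\|_{L^2(B_1)}\leq\sqrt{\varepsilon_0}$, I invoke the $\eps$-regularity Theorem \ref{ereg} in the pure bi-harmonic case (i.e.\ with forcing term $f\equiv 0$). The rescaled estimate \eqref{PTWSest} then gives
\beq\label{pointwise-plan}
|\nabla^l u|(x)\leq \frac{C_l\sqrt{\varepsilon_0}}{(1-|x|)^l},\qquad l=1,2,3,
\eeq
for all $x\in B_1$. Squaring and multiplying as appropriate, each of $|\nabla u|^4$, $|\nabla u|^2|\Delta u|$, $|\nabla^2 u|^2$, $|\nabla u||\nabla\Delta u|$ is bounded by $C\varepsilon_0\,(1-|x|)^{-4}$.

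For the $L^1$ bounds, the first two cases follow from Remark \ref{auto} and Cauchy--Schwarz: $\int_{B_1}|\nabla u|^4\leq \int_{B_1}|\Delta u|^2\leq\varepsilon_0$ and $\int_{B_1}|\nabla u|^2|\Delta u|\leq\|\nabla u\|_4^2\|\Delta u\|_2\leq\varepsilon_0^{3/2}\leq\varepsilon_0$. The remaining cases $q(u)=|\nabla^2 u|^2$ and $q(u)=|\nabla u||\nabla\Delta u|$ are the delicate part and rely crucially on the extra hypothesis $\|\Delta^2 u\|_{L^1(B_1)}\leq\varepsilon_0$. The plan is to integrate by parts on shrinking subdomains $B_r$ with $r<1$: for any such $r$,
\beq
\int_{B_r}|\nabla^2 u|^2\,dx=\int_{B_r}|\Delta u|^2\,dx+\int_{\partial B_r}\bigl(\partial_i\partial_j u\,\partial_j u-\Delta u\,\partial_i u\bigr)\nu_i\,d\sigma,
\eeq
and an analogous identity expresses $\int_{B_r}\nabla u\cdot\nabla\Delta u$ as $-\|\Delta u\|_{L^2(B_r)}^2$ plus a boundary flux involving $\Delta u\,\partial_\nu u$. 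A mean-value argument in the radial variable (in the spirit of the one used in the proof of Theorem \ref{ereg}) selects a sequence of good radii $r_k\uparrow 1$ on which the boundary fluxes are controlled by the pointwise bounds \eqref{pointwise-plan} combined with $\|\Delta^2 u\|_{L^1}\leq\varepsilon_0$; passing to the limit yields $\|q(u)\|_{L^1}\leq C\varepsilon_0$.

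The main obstacle is precisely this global $L^1$ estimate in cases 3 and 4. The pointwise $\eps$-regularity bound \eqref{pointwise-plan}, while giving the pointwise hypothesis of Theorem \ref{appblemma} at once, is not integrable to the required power near $\partial B_1$: the naive integral of $\varepsilon_0(1-|x|)^{-4}$ over $B_1$ diverges. The extra assumption $\|\Delta^2 u\|_{L^1(B_1)}\leq\varepsilon_0$ is what makes the boundary fluxes produced by the integration-by-parts identities tractable, and is the single point at which this assumption (not present in Theorems \ref{THM1}, \ref{extrinsicbiharmonic}, \ref{THM2}) is used. Once (a) and (b) are verified, the construction of $\phi$ and the stated estimates are immediate from Theorem \ref{appblemma}.
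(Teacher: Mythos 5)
Your overall strategy matches the paper's: obtain the pointwise bound $q(u)(x)\leq C\varepsilon_0(1-|x|)^{-4}$ from the $\eps$-regularity Theorem \ref{ereg} (with $f\equiv 0$), and then invoke Theorem \ref{appblemma} with $C_0 = C\varepsilon_0$. In fact the paper's own proof is terser than yours --- it records only the pointwise bound and immediately applies Theorem \ref{appblemma} --- so your observation that the second hypothesis of Theorem \ref{appblemma}, namely $\|q(u)\|_{L^1(B_1)}\leq C\varepsilon_0$, requires a separate verification is genuine and useful. Your $L^1$ estimates for $q(u)=|\nabla u|^4$ (via Remark \ref{auto}) and $q(u)=|\nabla u|^2|\Delta u|$ (via Cauchy--Schwarz) are correct.

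The remedy you sketch for the remaining cases $q(u)=|\nabla^2 u|^2$ and $q(u)=|\nabla u|\,|\nabla\Delta u|$ does not close. The boundary integrals produced by your integration-by-parts identities involve $\nabla^2 u\cdot\nabla u$ and $\Delta u\cdot\nabla u$ on $\partial B_r$, neither of which is controlled by $\|\Delta^2 u\|_{L^1(B_1)}$, so the claim that those fluxes are ``controlled by the pointwise bounds combined with $\|\Delta^2 u\|_{L^1}\leq\varepsilon_0$'' is not substantiated. Concretely, the pointwise estimates \eqref{PTWSest} give only a bound of order $\varepsilon_0(1-r)^{-3}$ on the flux, which blows up as $r\uparrow 1$, so no sequence of radii $r_k\uparrow 1$ makes the flux tend to zero on pointwise grounds; and averaging the flux over $r\in(1/2,1)$ gives a bound of the form $\|\nabla^2 u\|_{L^2(B_1)}\|\nabla u\|_{L^2(B_1)}+\|\Delta u\|_{L^2(B_1)}\|\nabla u\|_{L^2(B_1)}$, which already presupposes $\|\nabla^2 u\|_{L^2(B_1)}$ is small --- precisely the quantity you are trying to estimate --- so the argument is circular. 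A different input is needed to obtain the smallness of $\int_{B_1}|\nabla^2 u|^2$ and $\int_{B_1}|\nabla u|\,|\nabla\Delta u|$ (presumably by exploiting the precise algebraic structure of $\Delta^2 u$ for a bi-harmonic map into $\mathbf{S}^n$ together with the hypothesis $\|\Delta^2 u\|_{L^1}\leq\varepsilon_0$), and it is fair to say the paper itself does not make this step explicit.
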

\begin{proof}
Note that by the $\eps$-regularity Theorem \ref{ereg} (with $f\equiv 0$), we have
\begin{equation}
\label{Qu}
|q(u)|(x)\leq C\Vert \Delta u\Vert_2^2(1-|x|)^{-4} \leq C\varepsilon_0 (1-|x|)^{-4}\,,
\end{equation}
a.e. $x\in B_1$. We can then apply Theorem \ref{appblemma} (with $C_0 = C\varepsilon_0$) to conclude the proof of this lemma.
\end{proof}

\begin{lemma}\label{wentelemma}
There exist constants $\varepsilon_0>0, C>0$ such that if $u, v\in W^{2,2}(B_1, \mathbf{S}^n)$ with $u|_{\partial B_1}=v|_{\partial B_1},\partial_\nu u |_{\partial B_1}=\partial_\nu v|_{\partial B_1},$ 
$$\int_{B_1} \vert \Delta u\vert^2 \, dx \leq\varepsilon_0 \quad \text{and} \quad  \int_{B_1}|\Delta^2 u| \, dx \leq \varepsilon_0 \,,$$ 
and $u$ is a weakly intrinsic or extrinsic bi-harmonic map, then there exists $C>0$ such that
\begin{equation}\label{keyestimate}
\int_{B_1} \vert v-u\vert^2  q(u) \; dx  \,\leq \,C\varepsilon_0 \int_{B_1} \vert \Delta(v-u)\vert^2\;dx\,.
\end{equation}
Here $q(u) = |\nabla u|^4, |\nabla u|^2|\Delta u|, |\nabla ^2 u|^2$ or $|\nabla u||\nabla \Delta u|$.\end{lemma}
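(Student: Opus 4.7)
Let $w := v-u$, which satisfies $w|_{\partial B_1} = \partial_\nu w|_{\partial B_1} = 0$. By Lemma~\ref{lf} there exists $\phi$ solving $\Delta^2 \phi = q(u)$ in $B_1$ with $\phi = \partial_\nu \phi = 0$ on $\partial B_1$ and $\|\Delta \phi\|_2 + \|\nabla \phi\|_4 + \|\phi\|_\infty \leq C\varepsilon_0$. The strategy is to trade the weighted inequality on $q(u)$ for a duality pairing against $\phi$, then integrate by parts repeatedly, transferring derivatives off $\phi$ onto $w$, so that the smallness on $\phi$ combines with the Sobolev bounds on $w$ to yield the desired quadratic control in $\|\Delta w\|_2$.

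Writing $q(u) = \Delta^2 \phi$ and integrating by parts twice (all boundary contributions vanish since $w, \partial_\nu w$ and $\phi, \partial_\nu \phi$ all vanish on $\partial B_1$, and hence $|w|^2, \partial_\nu |w|^2 = 0$ there as well), one obtains
\begin{equation*}
\int_{B_1} |w|^2 q(u)\, dx = \int_{B_1}\Delta |w|^2\cdot \Delta\phi\, dx = 2\int_{B_1} |\nabla w|^2 \Delta \phi\, dx + 2\int_{B_1} (w\cdot \Delta w)\,\Delta\phi\, dx.
\end{equation*}
The first integral is immediately bounded using H\"older, the estimate $\|\nabla w\|_4 \leq C\|\Delta w\|_2$ from \eqref{control1}, and the bound on $\Delta \phi$: one has $|\int |\nabla w|^2 \Delta \phi| \leq \|\nabla w\|_4^2 \|\Delta \phi\|_2 \leq C\varepsilon_0 \|\Delta w\|_2^2$.

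The main obstacle is the second integral, because the naive H\"older bound loses: $\|w\|_\infty \leq 2$ carries no smallness in $\|\Delta w\|_2$. To get around this, I would integrate by parts once more, moving a gradient off $\Delta \phi$, to get
\begin{equation*}
\int_{B_1}(w\cdot \Delta w)\Delta \phi\, dx = -\int_{B_1} \nabla w\cdot \Delta w\cdot \nabla \phi\,dx - \int_{B_1} w\cdot \nabla \Delta w\cdot \nabla \phi\,dx,
\end{equation*}
where the boundary contribution vanishes thanks to $\partial_\nu \phi = 0$. The first piece is controlled by $\|\nabla w\|_4 \|\Delta w\|_2 \|\nabla \phi\|_4 \leq C\varepsilon_0 \|\Delta w\|_2^2$. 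For the remaining piece $\int w\,\nabla \Delta w\cdot \nabla \phi$, a further IBP moving the outer gradient from $\nabla \Delta w$ onto the product $w \nabla \phi$ (the boundary term vanishes because $w = 0$ on $\partial B_1$) expands it into three contributions of the form $\int \phi(\Delta w)^2$, $\int \nabla w \cdot \nabla \phi\cdot \Delta w$, and $\int (w\cdot \Delta w)\,\Delta \phi$; the first two are of the required form, controlled respectively by $\|\phi\|_\infty$ and $\|\nabla \phi\|_4$, while the third reproduces the original left-hand side.

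The delicate bookkeeping step is to combine these IBP identities linearly so that the repeated reappearances of $\int (w\cdot\Delta w)\Delta\phi$ do not collapse into a tautology: each surviving factor of $w$ must be paired either with a gradient (to invoke $\|\nabla w\|_4 \leq C\|\Delta w\|_2$) or with $\phi$ itself (to invoke $\|\phi\|_\infty \leq C\varepsilon_0$), never with $\Delta \phi$ alone. With the chain of IBPs arranged so as to isolate $\int (w\cdot \Delta w)\Delta \phi$ on one side and express the remainder through admissibly bounded pieces, one extracts $|\int (w\cdot \Delta w)\Delta \phi| \leq C\varepsilon_0 \|\Delta w\|_2^2$, and combining with the estimate on the $|\nabla w|^2\Delta\phi$ term completes the proof of \eqref{keyestimate}.
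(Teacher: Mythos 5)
Your first integration by parts and the treatment of the easy term $\int_{B_1}|\nabla w|^2\Delta\phi\,dx$ match the paper exactly. The gap is in the handling of the hard term $\int_{B_1}(w\cdot\Delta w)\Delta\phi\,dx$: the chain of integrations by parts you describe is in fact exactly tautological, and the fix you gesture at is not carried out and does not work as stated. Concretely, writing out your two integrations by parts,
\begin{equation*}
\int_{B_1}(w\cdot\Delta w)\Delta\phi\,dx
= -\sum_i\int_{B_1}\Delta w^i\,(\nabla w^i\cdot\nabla\phi)\,dx
-\sum_i\int_{B_1} w^i\,\nabla\Delta w^i\cdot\nabla\phi\,dx\,,
\end{equation*}
and then moving the outer gradient off $\nabla\Delta w^i$ onto $w^i\nabla\phi$ gives
\begin{equation*}
-\sum_i\int_{B_1} w^i\,\nabla\Delta w^i\cdot\nabla\phi\,dx
= \sum_i\int_{B_1}\Delta w^i\,(\nabla w^i\cdot\nabla\phi)\,dx
+\int_{B_1}(w\cdot\Delta w)\Delta\phi\,dx\,.
\end{equation*}
Substituting back, the two $\sum_i\int\Delta w^i(\nabla w^i\cdot\nabla\phi)$ pieces cancel identically and you recover $\int(w\cdot\Delta w)\Delta\phi = \int(w\cdot\Delta w)\Delta\phi$, with no usable estimate. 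In particular, the claimed expansion ``into three contributions of the form $\int\phi(\Delta w)^2$, $\int\nabla w\cdot\nabla\phi\cdot\Delta w$, and $\int(w\cdot\Delta w)\Delta\phi$'' is not what this integration by parts produces: no $\int\phi(\Delta w)^2$ term arises (that would require a gradient to land on $\nabla\phi$, not on $w\nabla\phi$). You flag the danger of the term reproducing itself, but the ``delicate bookkeeping'' that is supposed to avoid it is never exhibited, and the scheme you propose has no mechanism to escape the cancellation.

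The paper avoids this circularity by a structurally different move. It does not integrate $\int\langle\Delta w, w\rangle\Delta\phi$ by parts at all; instead it applies Cauchy--Schwarz to split off the auxiliary quantity $X := \int_{B_1}|w|^2|\Delta\phi|^2\,dx$, so that $\bigl|\int\langle\Delta w, w\rangle\Delta\phi\bigr| \le \|\Delta w\|_2\,X^{1/2}$. It then estimates $X$ on its own by integrating $|\Delta\phi|^2 = \nabla\phi\cdot\nabla\Delta\phi$ by parts (twice), which produces terms controlled by $\|\nabla\phi\|_4$ and $\|\phi\|_\infty$ plus a single reappearance of $X^{1/2}$ and a single reappearance of $\int|w|^2\Delta^2\phi$ (handled using $\Delta^2\phi = q(u)\ge 0$). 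The resulting inequality has the self-improving form $X \le a\,X^{1/2} + b$ with $a\lesssim\varepsilon_0\|\Delta w\|_2$, $b\lesssim\varepsilon_0^2\|\Delta w\|_2^2$, which closes to $X\lesssim\varepsilon_0^2\|\Delta w\|_2^2$ and hence gives the lemma. The key idea you are missing is precisely the introduction of $X$ as a new unknown and treating the apparent circularity as a quadratic self-improving estimate rather than trying to cancel the bad term purely by rearranging integrations by parts.
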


\begin{rem}This estimate replaces the direct use of the Hardy inequality (Theorem \ref{hardyineq}) in the proofs of Theorem \ref{THM1}, Theorem \ref{extrinsicbiharmonic} and Theorem \ref{THM2} and then yields new proofs assuming $\|\Delta^2 u\|_{L^1(B_1)}$ is small, but more importantly this shows that the compensation phenomena obtained in Lemma \ref{lf} using the $\epsilon$-regularity (Theorem \ref{ereg}) is almost equivalent to the Hardy inequality.
\end{rem}

\begin{proof}
Let $\phi$ be a solution of \eqref{Eq1}, then
\begin{align}
\label{I}
\int_{B_1} \vert v-u \vert^2 q(u)\; dx \,&= \,\int_{B_1} \vert v-u \vert^2 \Delta^2 \phi \; dx\; = \int_{B_1} \Delta (\vert v-u \vert^2)\Delta \phi \; dx\notag \\
& = 2\int_{B_1} \langle \Delta (v-u), v-u\rangle \Delta \phi \; dx +2  \int_{B_1} \vert \nabla (v-u)\vert^2 \Delta \phi \; dx \notag\\
& \leq 2\left( \left( \int_{B_1} \vert  \Delta (v-u)\vert^2 \; dx\right)^\frac{1}{2} \left( \int_{B_1} \vert  (v-u)\vert^2 \vert \Delta \phi\vert^2  \; dx\right)^\frac{1}{2}  + \Vert\nabla (v-u) \Vert_4^2 \Vert \Delta \phi\Vert_2 \right).
\end{align}
Remarking that we have \eqref{control1}, then Lemma \ref{lf} permits to control the second term in \eqref{I} as desired. Now for the first term in \eqref{I}, we have
\begin{align}
\label{e19}
\int_{B_1} \vert v-u\vert^2 \vert \Delta \phi \vert^2 \; dx 
=& -2 \int_{B_1} \langle \nabla  (v-u), (v-u) \nabla \phi\rangle  \Delta \phi \; dx - \int_{B_1} \vert  v-u\vert^2 \langle \nabla \phi, \nabla \Delta \phi \rangle \; dx\notag \\
=&-2 \int_{B_1} \langle \nabla  (v-u), (v-u) \nabla \phi\rangle  \Delta \phi \; dx +2 \int_{B_1} \langle \nabla  (v-u), (v-u)\phi \nabla \Delta \phi \rangle  \; dx \notag\\  
&+ \int_{B_1} \vert  v-u\vert^2  \phi  \Delta^2 \phi \; dx \notag\\
=& -4 \int_{B_1} \langle \nabla  (v-u), (v-u) \nabla \phi\rangle  \Delta \phi \; dx  -2 \int_{B_1} \vert \nabla  (v-u)\vert^2  \phi \Delta \phi \; dx \notag\\  
&-2 \int_{B_1} \langle \Delta  (v-u), v-u\rangle  \phi \Delta \phi \; dx + \int_{B_1} \vert  v-u\vert^2  \phi  \Delta^2 \phi  \; dx \notag\\
\leq &  \, C \left( \Vert \nabla \phi \Vert_4 \Vert \nabla (v-u)\Vert_4 \left( \int_{B_1} \vert v-u\vert^2 \vert \Delta \phi \vert^2 \; dx\right)^\frac{1}{2}  + \Vert \phi\Vert_\infty \Vert \Delta \phi\Vert_2 \left( \int_{B_1} \vert  \nabla  (v-u)\vert^4 \; dx\right)^\frac{1}{2} \right.\\
&+\left.  \Vert \phi\Vert_\infty\Vert \Delta  (v-u)\Vert_2 \left( \int_{B_1} \vert v-u\vert^2 \vert \Delta \phi \vert^2 \; dx\right)^\frac{1}{2} +\Vert \phi \Vert_\infty \int_{B_1} \vert  v-u\vert^2  \Delta^2 \phi  \; dx  \right)\,, \notag
\end{align}
where we used the fact $\Delta^2 \phi \geq0$ for the last inequality. Then, thanks to 
\begin{align}
\label{e20}
\int_{B_1} \vert  v-u\vert^2  \Delta^2 \phi  \; dx &= -2 \int_{B_1} \langle \nabla( v-u),(v-u)  \nabla \Delta \phi  \rangle\; dx\notag\\
& =2 \int_{B_1} \langle \Delta ( v-u),v-u\rangle   \Delta \phi  \; dx +2  \int_{B_1} \vert  \nabla (v-u)\vert^2  \Delta \phi  \; dx \notag\\
&\leq C\left(\Vert \Delta  (v-u)\Vert_2  \left( \int_{B_1} \vert v-u\vert^2 \vert \Delta \phi \vert^2 \; dx\right)^\frac{1}{2}  + \Vert \nabla (v-u)\Vert_4^2  \Vert \Delta  \phi\Vert_2\right)\,,
\end{align}
combining \eqref{phipro}, \eqref{e19} and \eqref{e20} gives
\begin{align}
\int_{B_1} \vert v-u\vert^2 \vert \Delta \phi \vert^2 \; dx  \leq C\left(\Vert \Delta u \Vert_2^2 \Vert \Delta  (v-u)\Vert_2  \left( \int_{B_1} \vert v-u\vert^2 \vert \Delta \phi \vert^2 \; dx\right)^\frac{1}{2}  + \Vert \Delta u \Vert_2^4 \Vert \Delta  (v-u)\Vert_2^2 \right)\,,
\end{align}
which finally implies that the first term in \eqref{I} is also controlled as desired if $\varepsilon_0$ is small.
\end{proof}

\addcontentsline{toc}{chapter}{Bibliographie}
\bibliographystyle{plain}
\bibliography{BH}
\end{document}